\documentclass[12pt,reqno]{amsart}

\usepackage{amsfonts,color,amsthm,amsmath,amssymb}

\usepackage{color}
\textheight 25cm
\textwidth17cm
\hoffset-3truecm
\voffset-2.5truecm
\allowdisplaybreaks[3]

\def\Box{\vcenter{\vbox{\hrule\hbox{\vrule
     \vbox to 8.8pt{\hbox to 10pt{}\vfill}\vrule}\hrule}}}

\newcommand{\tr}{\text{Tr}}
\def\qed{{\hfill$\square$}}
\def\proof{{\vspace{-0.0cm}\bf Proof: \,}}

\def\Z{{\mathbb Z}}

\def\F{{\mathbb F}}

\def\mod{{\mathrm{mod\,\,}}}

\def\Tr{{\mathrm{Tr}}}
\def\Norm{{\mathrm{Norm}}}

\def\PG{{\mathrm{PG}}}
\def\AG{{\mathrm{AG}}}

\newtheorem{theorem}{Theorem}[section]

\newtheorem{lemma}[theorem]{Lemma}
\newtheorem{remark}[theorem]{Remark}

\newtheorem{proposition}[theorem]{Proposition}

\newtheorem{conj}[theorem]{Conjecture}
\numberwithin{equation}{section}


\newtheorem{result}[theorem]{Result}
\newtheorem{notation}[theorem]{Notation}
\def\cQ{{\mathcal Q}}
\def\cM{{\mathcal M}}
\def\cK{{\mathcal K}}
\def\cP{{\mathcal P}}
\def\cL{{\mathcal L}}
\def\la{\langle}
\def\ra{\rangle}
\def\sgn{\textup{sgn}}


\begin{document}
\title[Cameron-Liebler line classes]{Cameron-Liebler line classes with parameter $x=\frac{q^2-1}{2}$}

\author[Feng, Momihara and Xiang]{Tao Feng$^*$, Koji Momihara$^{\dagger}$ and Qing Xiang}
\thanks{$^*$Research supported in part by Fundamental Research Fund for the Central Universities of China, Zhejiang University K.P. Chao's High Technology Development Foundation, the National Natural Science Foundation of China under Grant No. 11201418 and 11422112, and the Research Fund for Doctoral Programs from the Ministry of Education of China under Grant No. 20120101120089.}
\thanks{$^{\dagger}$
Research supported by JSPS under Grant-in-Aid for Young Scientists (B) 25800093 and Scientific Research (C) 24540013.}

\address{Department of Mathematics, Zhejiang University, Hangzhou 310027, Zhejiang, P. R. China}
\email{tfeng@zju.edu.cn}

\address{Faculty of Education, Kumamoto University, 2-40-1 Kurokami, Kumamoto 860-8555, Japan} \email{momihara@educ.kumamoto-u.ac.jp}

\address{Department of Mathematical Sciences, University of Delaware, Newark, DE 19716, USA} \email{xiang@math.udel.edu}

\keywords{Cameron-Liebler line class, Klein correspondence,
projective two-intersection set, affine two-intersection set, strongly regular graph, Gauss sum.}
\begin{abstract}
In this paper, we give an algebraic construction of a new infinite family of Cameron-Liebler line classes with parameter $x=\frac{q^2-1}{2}$ for $q\equiv 5$ or $9\pmod{12}$, which generalizes the examples found by Rodgers in  \cite{rodgers} through a computer search.
Furthermore, in the case where  $q$ is an even power of $3$, we construct the first infinite family of affine two-intersection sets in $\AG(2,q)$, which is closely related to our Cameron-Liebler line classes.
\end{abstract}

\maketitle

\section{Introduction}
Cameron-Liebler line classes were first introduced by Cameron and Liebler \cite{CL} in their study of collineation groups of $\PG(3,q)$ having the same number of orbits on points and lines of $\PG(3,q)$. Later on it was found that these line classes have many connections to other geometric and combinatorial objects, such as blocking sets of $\PG(2,q)$, projective two-intersection sets in $\PG(5,q)$, two-weight linear codes, and strongly regular graphs.  In the last few years, Cameron-Liebler line classes have received considerable  attention from researchers in both finite geometry and algebraic combinatorics; see, for example, \cite{DHS, Metsch1, Metsch2, rodgers, Gav, GavM}. In \cite{CL}, the authors gave several equivalent conditions for a set of lines of $\PG(3,q)$ to be a Cameron-Liebler line class; Penttila \cite{Pchar} gave a few more of such characterizations. We will use one of these characterizations as the definition of a Cameron-Liebler line class. Let $\cL$ be a set of lines of $\PG(3,q)$ with $|\cL|=x(q^2+q+1)$, $x$ a nonnegative integer. We say that $\cL$ is a {\it Cameron-Liebler line class with parameter} $x$ if every spread of $\PG(3,q)$ contains $x$ lines of $\cL$. Clearly the complement of a Cameron-Liebler line class with parameter $x$ in the set of all lines of $\PG(3,q)$ is a Cameron-Liebler line class with parameter $q^2+1-x$. So without loss of generality we may assume that $ x\leq \frac{q^2+1}{2}$ when discussing Cameron-Liebler line classes of parameter $x$.  

Let $(P,\pi)$ be any non-incident point-plane pair of $\PG(3,q)$. Following \cite{Pchar}, we define $star(P)$ to be the set of all lines through $P$, and $line(\pi)$ to be the set of all lines contained in the plane $\pi$. We have the following trivial examples:
\begin{enumerate}
\item The empty set gives a Cameron-Liebler line class with parameter $x=0$;
\item Each of $star(P)$ and $line(\pi)$ gives a  Cameron-Liebler line class with parameter $x=1$;
\item $star(P)\cup line(\pi)$ gives a Cameron-Liebler line class with parameter $x=2$.
\end{enumerate}
Cameron-Liebler line classes are rare.  It was once conjectured (\cite[p.~97]{CL}) that the above trivial examples and their complements are all of the Cameron-Liebler line classes. The first counterexample to this conjecture was given by Drudge \cite{drudge} in $\PG(3,3)$, and it has parameter $x=5$. Later Bruen and Drudge \cite{BD} generalized Drudge's example into an infinite family with parameter $x=\frac{q^2+1}{2}$ for all odd $q$. This represents the only known infinite family of nontrivial Cameron-Liebler line classes before our work. Govaerts and Penttila \cite{GP} gave a sporadic example with parameter $x=7$ in $\PG(3,4)$. Recent work by Rodgers suggests that there are probably more infinite families of Cameron-Liebler line classes awaiting to be discovered. In \cite{rodgers}, Rodgers obtained new Cameron-Liebler line classes with parameter $x=\frac{q^2-1}{2}$ for $q\equiv 5$ or $9\pmod{12}$ and $q<200$. In his thesis \cite{Rthesis}, Rodgers also reported new examples with parameters $x=\frac{(q+1)^2}{3}$ for  $q\equiv 2\pmod{3}$ and $q<150$ as joint work with his collaborators.
These examples motivated us to find new general constructions of Cameron-Liebler line classes.

On the nonexistence side, Govaerts and Storme \cite{GS} first showed that there are no Cameron-Liebler line classes in $\PG(3,q)$ with parameter $2<x\leq q$ when $q$ is prime. Then De Beule, Hallez and Storme \cite{DHS} excluded parameters $2<x\leq q/2$ for all values $q$. Next Metsch \cite{Metsch1} proved the non-existence of Cameron-Liebler line classes with parameter $2<x\leq q$, and subsequently improved this result by showing the nonexistence of Cameron-Liebler line classes with parameter $2<x<q\sqrt[3]{\frac{q}{2}}-\frac{2}{3}q$ \cite{Metsch2}. The latter result represents the best asymptotic nonexistence result to date.  It seems reasonable to believe that for any fixed $0<\epsilon<1$ and constant $c>0$ there are no Cameron-Liebler line classes with $2<x<cq^{2-\epsilon}$ for sufficiently large $q$. Very recently, Gavrilyuk and Metsch \cite{GavM} proved a modular equality which eliminates almost half of the possible values $x$ for a Cameron-Liebler line class with parameter $x$. We refer to \cite{Metsch2} for a comprehensive survey of the known nonexistence results.

In the present paper we construct a new infinite family of Cameron-Liebler line classes with parameter $x=\frac{q^2-1}{2}$ for $q\equiv 5$ or $9\pmod{12}$. This family of Cameron-Liebler line classes generalizes the examples found by Rodgers in \cite{rodgers} through a computer search.  Furthermore, in the case where  $q$ is an even power of $3$, we construct the first infinite family of affine two-intersection sets,  which is closely related to the newly constructed Cameron-Liebler line classes. The first step of our construction follows the same idea as in \cite{rodgers}. That is, we prescribe an automorphism group for the Cameron-Liebler line classes that we intend to construct; as a consequence, the Cameron-Liebler line classes will be unions of orbits of the prescribed automorphism group on the set of lines of $\PG(3,q)$. The main difficulty with this approach is how to choose orbits properly so that their union is a Cameron-Liebler line class. We overcome this difficulty by giving an explicit choice of orbits so that their union gives a Cameron-Liebler line class with the required parameters. The details are given in Section 4.

The paper is organized as follows.  In Section~\ref{sec:pre}, we review basic properties of and facts on Cameron-Liebler line classes;  furthermore,  we collect auxiliary results on characters of finite fields, which are needed in the proof of our main theorem.  In Section~\ref{sec:setX}, we introduce a subset of $\F_{q^3}$, which we will use in the construction of our Cameron-Liebler line classes, and prove a few properties of the subset. In Section~\ref{proofPDS}, we give an algebraic construction of an infinite family of Cameron-Liebler line classes with $x=\frac{q^2-1}{2}$ for $q\equiv 5$ or $9\pmod{12}$. In Section~\ref{sec:aff}, we construct the first infinite family of affine two-intersection sets in $\AG(2,q)$, $q$ odd, whose existence was conjectured in the thesis \cite{Rthesis} of Rodgers. We close the paper with some concluding remarks.

\section{Preliminaries}\label{sec:pre}
In this section, we review basic facts on Cameron-Liebler line classes,  and collect auxiliary results on characters of finite fields.
\subsection{Preliminaries on Cameron-Liebler line classes}
It is often advantageous to study Cameron-Liebler line classes in $\PG(3,q)$ by using their images under the Klein correspondence. Let $\cQ^+(5,q)$ be the 5-dimensional hyperbolic orthogonal space and $x$ be a nonnegative integer. A subset $\mathcal{M}$ of $\cQ^+(5,q)$ is called an {\it $x$-tight set} if for every point $P\in \cQ^+(5,q)$, $|P^\perp\cap\mathcal{M}|=x(q+1)+q^2$ or $x(q+1)$ according as $P$ is in $\mathcal{M}$ or not, where $\perp$ is the polarity determined by $\cQ^+(5,q)$. The geometries of $\PG(3,q)$ and $\cQ^+(5,q)$ are closely related through a mapping known as the Klein correspondence which maps the lines of $\PG(3,q)$ bijectively to  the points of $\cQ^+(5,q)$, c.f. \cite{h2,stan}. Let $\cL$ be a set of lines of $\PG(3,q)$ with $|\cL|=x(q^2+q+1)$, $x$ a nonnegative integer, and let $\cM$ be the image of $\cL$ under the Klein correspondence. Then it is known that $\cL$ is a Cameron-Liebler line class with parameter $x$ in $\PG(3,q)$ if and only if $\cM$ is an $x$-tight set of  $\cQ^+(5,q)$.  Moreover, if $\cL$ is a Cameron-Liebler line class with parameter $x$, by \cite[Theorem 2.1 (b)]{Metsch1}, it holds that $|P^\perp\cap\mathcal{M}|=x(q+1)$ for any point $P$ off $\cQ^+(5,q)$; consequently $\mathcal{M}$ is a projective two-intersection set in $\PG(5,q)$ with intersection sizes $h_1=x(q+1)+q^2$ and $h_2=x(q+1)$, namely each hyperplane of $\PG(5,q)$ intersects $\mathcal{M}$ in either $h_1$ or $h_2$ points.  We summarize these known facts as follows.

\begin{result}\label{res_1} Let $\mathcal{L}$ be a set of $x(q^2+q+1)$ lines in $\PG(3,q)$, with $0<x\leq \frac{q^2+1}{2}$, and let $\cM$ be the image of $\cL$ under the Klein correspondence. Then $\mathcal{L}$ is a Cameron-Liebler line class with parameter $x$ if and only if $\mathcal{M}$ is an $x$-tight set in $\cQ^+(5,q)$; moreover, in the case when $\cL$ is a Cameron-Liebler line class, we have
\begin{align*}
|P^\perp\cap\mathcal{M}|=\begin{cases}x(q+1)+q^2,\quad &\textup{if $P\in\mathcal{M}$},\\
x(q+1),\quad & \textup{otherwise}.
\end{cases}
\end{align*}
\end{result}

A $(v,k,\lambda,\mu)$ {\it strongly regular graph} is a simple undirected regular graph on $v$ vertices with valency $k$ satisfying the following: for any two adjacent (resp. nonadjacent) vertices $x$ and $y$ there are exactly $\lambda$ (resp. $\mu$) vertices adjacent to both $x$ and $y$. It is known that a graph with valency $k$, not complete or edgeless, is strongly regular if and only if its adjacency matrix has exactly two restricted eigenvalues. Here, we say that an eigenvalue of the adjacency matrix is {\it restricted} if it has an eigenvector perpendicular to the all-ones vector.

One of the most effective methods for constructing strongly regular graphs is by the Cayley graph construction. Let $G$ be a finite abelian group and $D$ be an inverse-closed subset of $G\setminus\{0\}$.  We define a graph ${\rm Cay}(G,D)$ with the elements of $G$ as its vertices; two vertices $x$ and $y$ are adjacent if and only if $x-y\in D$. The graph ${\rm Cay}(G,D)$ is called a {\it Cayley graph} on $G$ with connection set $D$. The eigenvalues of ${\rm Cay}(G,D)$ are given by $\psi(D)$, $\psi\in {\widehat G}$, where ${\widehat G}$ is the group consisting of all complex characters of $G$, c.f.~\cite[\S1.4.9]{bh}. Using the aforementioned spectral characterization of strongly regular graphs, we see that ${\rm Cay}(G,D)$ with connection set $D$($\not=\emptyset,G$) is strongly regular if and only if $\psi(D)$, $\psi\in {\widehat G}\setminus\{1\}$, take exactly two values, say $\alpha_1$ and $\alpha_2$ with $\alpha_1>\alpha_2$. We note that if ${\rm Cay}(G,D)$ is strongly regular with two restricted eigenvalues $\alpha_1$ and $\alpha_2$, then the set
$\{\psi\in {\widehat G}\,|\,\psi(D)=\alpha_1\}$ also forms a connection set of a strongly regular Cayley graph on ${\widehat G}$; this set is called the {\it dual} of $D$. For basic properties of strongly regular graphs, see \cite[Chapter 9]{bh}. For known constructions of  strongly regular Cayley graphs and their connections to two-weight linear codes, partial difference sets, and finite geometry, see \cite[p.~133]{bh} and \cite{CK,M94}.

Let $\cL$ be a Cameron-Liebler line class with parameter $x$ in $\PG(3,q)$ and let $\cM\subset \cQ^+(5,q)$ be the image of $\cL$ under the Klein correspondence. By Result \ref{res_1}, $\cM$ is a projective two-intersection set in $\PG(5,q)$. By \cite{CK}, we can construct a corresponding strongly regular Cayley graph as follows. First define $D:=\{\lambda v:\,\lambda\in\F_q^*,\; \la v \ra\in \cM\}$, which is a subset of $(\F_q^6,+)$. Then the Cayley graph with vertex set $(\F_q^6,+)$ and connection  set $D$ is strongly regular. Its restricted eigenvalues can be determined as follows. Let $\psi$ be a nonprincipal additive character of $\F_q^6$. Then $\psi$ is principal on a unique hyperplane $P^\perp$ for some $P\in \PG(5,q)$. We have
\begin{align*}
\psi(D)&=\sum_{\la v\ra\in\cM}\sum_{\lambda\in\F_q^*}\psi(\lambda v)
=\sum_{\la v\ra\in\cM}(q[[\la v\ra\in P^\perp]]-1)\\
&=-|\cM|+q|P^\perp\cap\cM|
=\begin{cases}-x+q^3,\; &\textup{if $P\in\mathcal{M}$},\\
-x,\; & \textup{otherwise,}\end{cases}
\end{align*}
where $[[\la v\ra\in P^\perp]]$ is the Kronecker delta function taking value $1$ if $\la v\ra\in P^\perp$ and value $0$ otherwise.
Conversely, for each hyperplane $P^\perp$ of $\PG(5,q)$, we can find a nonprincipal character $\psi$ that is principal on $P^\perp$, and the size of $P^\perp\cap \cM$  can be computed from $\psi(D)$. Therefore, the character values of $D$ reflect the intersection sizes of $\cM$ with the hyperplanes of $\PG(5,q)$. To summarize, we have the following result.

\begin{result}\label{res_charD} Let $\mathcal{L}$ be a set of $x(q^2+q+1)$ lines in $\PG(3,q)$, with $0<x\leq \frac{q^2+1}{2}$, and let $\cM$ be the image of $\cL$ under the Klein correspondence. Define
\[
D:=\{\lambda v:\,\lambda\in\F_q^*,\;\la v \ra\in \cM\}\subset (\F_q^6,+).
\]
Then $\mathcal{L}$ is a  Cameron-Liebler line class with parameter $x$ if and only if $|D|=(q^3-1)x$ and for any $P\in \PG(5,q)$
\begin{align*}
\psi(D)
=\begin{cases}-x+q^3,\quad &\textup{if $P\in\mathcal{M}$},\\
-x,\quad & \textup{otherwise},\end{cases}
\end{align*}
where $\psi$ is any nonprincipal character of $\F_q^6$ that is principal on the hyperplane $P^\perp$.
\end{result}

Following \cite{rodgers} we now introduce a model of the hyperbolic quadric $\cQ^+(5,q)$, which will facilitate our algebraic construction. Let $E=\F_{q^3}$ and $F=\F_q$. We view $E\times E$ as a 6-dimensional vector space over $F$. For a nonzero vector $v\in E\times E$, we use $\la v\ra$ to denote the projective point in $\PG(5,q)$ corresponding to the one-dimensional subspace over $F$ spanned by $v$. Define a quadratic form $Q: E\times E\rightarrow F$ by
\[
Q((x,y))=\tr(xy), \;\forall\, (x,y)\in E\times E,
\]
where $\Tr$ is the relative trace from $E$ to $F$ (that is, for any $x\in E$, $\Tr(x)=x+x^q+x^{q^2}$).  The quadratic form $Q$ is clearly nondegenerate and $Q((x,0))=0$ for all $x\in E$. So $\{\la (x,0)\ra\mid x\in E^*\}$ is a totally isotropic plane with respect to $Q$. It follows that the quadric defined by $Q$ has Witt index 3, and so is hyperbolic. This quadric will be our model for $\cQ^+(5,q)$. Note that for a point $P=\la (x_0,y_0)\ra$, its polar hyperplane $P^\perp$ is given by $P^\perp=\{\la(x,y)\ra:\,\tr(xy_0+x_0y)=0\}$.

Let $\psi_E$ and $\psi_F$ be the canonical additive characters of $E$ and $F$, respectively.
Then each additive character of $E\times E$ has the form 
\begin{equation}\label{fieldchara}
\psi_{a,b}((x,y))=\psi_E(ax+by)=\psi_F(\Tr(ax+by)),\; (x,y)\in E\times E,
\end{equation}
where $(a,b)\in E\times E$. Since $\psi_{a,b}$ is principal on the hyperplane $\{\la(x,y)\ra: \Tr(ax+by)=0\}$,
the character sum condition in Result~\ref{res_charD} can be more explicitly rewritten as
\begin{equation}\label{eqn:polarD}
\psi_{a,b}(D)
=\begin{cases}-x+q^3,\quad &\textup{if $(b,a)\in D$},\\
-x,\quad & \textup{otherwise.}\end{cases}
\end{equation}

\subsection{Preliminaries on characters of finite fields}
In this subsection, we will collect some auxiliary results on Gauss sums. We
assume that the reader is familiar with the  basic theory of characters of finite fields as can found in Chapter~5 of \cite{LN97}.

For a multiplicative character
$\chi$  and the canonical
additive character $\psi$ of $\F_q$, define the {\it Gauss sum} by
\[
G(\chi)=\sum_{x\in \F_q^\ast}\chi(x)\psi(x).
\]
The following are some basic properties of Gauss sums:
\begin{enumerate}
\item[(i)] $G(\chi)\overline{G(\chi)}=q$ if $\chi$ is nonprincipal;
\item[(ii)] $G(\chi^{-1})=\chi(-1)\overline{G(\chi)}$;
\item[(iii)] $G(\chi)=-1$ if $\chi$ is principal.
\end{enumerate}
Let $\gamma$ be a fixed primitive element of $\F_q$ and $k$ a positive integer dividing $q-1$. For $0\leq i\leq k-1$ we set $C_i^{(k,q)}=\gamma^i\langle \gamma^k\rangle$. These are called the {\it $k$th cyclotomic classes} of $\F_q$. The {\it Gauss periods} associated with these cyclotomic classes are defined by $\psi(C_i^{(k,q)}):=\sum_{x\in C_i^{(k,q)}}\psi(x)$, $0\leq i\leq k-1$, where $\psi$ is the canonical additive character of $\F_q$. By orthogonality of characters, the Gauss periods can be expressed as a linear combination of Gauss sums:
\begin{equation}
\psi(C_i^{(k,q)})=\frac{1}{k}\sum_{j=0}^{k-1}G(\chi^{j})\chi^{-j}(\gamma^i), \; 0\leq i\leq k-1,
\end{equation}
where $\chi$ is any fixed multiplicative character of order $k$ of $\F_q$.  For example, if $k=2$, 
we have
\begin{equation}\label{eq:Gaussquad}
\psi(C_i^{(2,q)})=\frac{-1+(-1)^iG(\eta)}{2},\; 0\leq i\leq 1,
\end{equation}
where $\eta$ is the quadratic character of $\F_q$.

The following theorem on Eisenstein sums will be used in the proof of our main theorem in Section 4.
\begin{theorem}\label{thm:Yama}{\em (\cite[Theorem~1]{Yama})}
Let $\chi$ be a nonprincipal multiplicative character of $\F_{q^m}$ and
$\chi'$ be its restriction to $\F_q$. Choose a system $L$ of coset representatives of $\F_q^\ast$ in  $\F_{q^m}^\ast$ in such a way that $L$ can be partitioned into two parts:
\[
L_0=\{x:\,\Tr(x)=0\} \, \, \mbox{and}\, \, L_1=\{x:\,\Tr(x)=1\},
\]
where $\Tr$ is the relative trace from $\F_{q^m}$ to $\F_q$.
Then,
\[
\sum_{x\in L_1}\chi(x)=\left\{
\begin{array}{ll}
G(\chi)/G(\chi'), & \mbox{ if   $\chi'$ is nonprincipal,}\\
-G(\chi)/q, & \mbox{ otherwise}.
 \end{array}
\right.
\]
\end{theorem}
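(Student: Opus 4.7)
My plan is to relate the sum $S_1 := \sum_{x \in L_1} \chi(x)$ directly to $G(\chi)$ through the definition of the Gauss sum. Denoting by $\psi$ the canonical additive character of $\F_{q^m}$, one has $\psi(y) = \psi_F(\Tr(y))$; parametrising the summation variable as $y = cx$ with $c \in \F_q^\ast$, $x \in L$, the sum
\begin{equation*}
G(\chi) = \sum_{c\in \F_q^\ast}\sum_{x \in L} \chi'(c)\chi(x)\,\psi_F(c\,\Tr(x))
\end{equation*}
should split via the partition $L = L_0 \cup L_1$. Because $\Tr(x)=0$ on $L_0$ and $\Tr(x)=1$ on $L_1$, I expect the master identity
\begin{equation*}
G(\chi) \;=\; S_0 \sum_{c \in \F_q^\ast}\chi'(c) \;+\; S_1\, G(\chi'),
\end{equation*}
where $S_0 := \sum_{x \in L_0}\chi(x)$. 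Both cases of the theorem should fall out of this one relation.

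When $\chi'$ is nonprincipal, orthogonality forces $\sum_{c}\chi'(c) = 0$ while $G(\chi')\neq 0$, so dividing yields $S_1 = G(\chi)/G(\chi')$ immediately. The more delicate case is when $\chi'$ is principal but $\chi$ is not: then $\sum_{c}\chi'(c) = q-1$ and $G(\chi') = -1$, so the identity reduces to $G(\chi) = (q-1)S_0 - S_1$, one linear equation in the two unknowns $S_0, S_1$. To close the system I plan to invoke the orthogonality relation $\sum_{y\in\F_{q^m}^\ast}\chi(y) = 0$ (which holds since $\chi$ is nonprincipal); rewriting this sum via the same coset decomposition produces $(q-1)(S_0+S_1) = 0$, hence $S_0 = -S_1$. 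Substituting back gives $G(\chi) = -qS_1$, i.e.\ $S_1 = -G(\chi)/q$.

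One preliminary point to verify is the existence of the partition $L = L_0 \cup L_1$: every $\F_q^\ast$-coset in $\F_{q^m}^\ast$ either lies entirely in the trace-zero hyperplane (in which case any representative can be placed in $L_0$) or contains a unique element of trace $1$, namely $\Tr(x)^{-1}\cdot x$. The only genuine obstacle in the argument is the principal-restriction case, where the single linear relation coming from $G(\chi)$ is insufficient and must be supplemented by the orthogonality of $\chi$; the remaining manipulations are routine.
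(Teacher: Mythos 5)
Your argument is correct and complete: the identity $G(\chi)=S_0\sum_{c\in\F_q^\ast}\chi'(c)+S_1G(\chi')$ obtained from the parametrisation $y=cx$ does yield both cases, and the supplementary relation $(q-1)(S_0+S_1)=0$ from $\sum_{y\in\F_{q^m}^\ast}\chi(y)=0$ correctly closes the principal-restriction case. The paper itself states this result as a citation to Yamamoto--Yamada and gives no proof, so there is nothing to compare against; your derivation is the standard self-contained one and uses only facts the paper already records (namely $\psi_E=\psi_F\circ\Tr$ and $G(\chi')=-1$ for $\chi'$ principal).
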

We will also need the  {\it Hasse-Davenport product formula}, which is stated below.
\begin{theorem}
\label{thm:Stickel2}{\em (\cite[Theorem~11.3.5]{BEW97})}
Let $\theta$ be a multiplicative character of order $\ell>1$ of  $\F_{q}$. For  every nonprincipal multiplicative character $\chi$ of $\F_{q}$,
\[
G(\chi)=\frac{G(\chi^\ell)}{\chi^\ell(\ell)}
\prod_{i=1}^{\ell-1}
\frac{G(\theta^i)}{G(\chi\theta^i)}.
\]
\end{theorem}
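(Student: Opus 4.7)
My plan is to convert the asserted identity, via the Gauss--Jacobi relation $G(\alpha)G(\beta)=J(\alpha,\beta)G(\alpha\beta)$ (valid whenever $\alpha$, $\beta$, and $\alpha\beta$ are nonprincipal), into a statement purely about Jacobi sums, and then evaluate the resulting product by a change of variables in $G(\chi)^{\ell}$. Applying the Gauss--Jacobi relation to each factor $G(\chi\theta^i)$ for $i=1,\ldots,\ell-1$, a short algebraic rearrangement reduces the stated formula to the equivalent identity
\[
\chi^{\ell}(\ell)\,G(\chi)^{\ell}=G(\chi^{\ell})\prod_{i=1}^{\ell-1}J(\chi,\theta^i),
\]
so the task becomes to evaluate $G(\chi)^{\ell}$ and match it with the right-hand side.

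Next, I would expand $G(\chi)^{\ell}$ as an $\ell$-fold sum over $(x_1,\ldots,x_\ell)\in(\F_q^{*})^{\ell}$ and split according to the value of $s:=x_1+\cdots+x_\ell$. For $s\neq 0$ the substitution $x_j=s u_j$ forces $u_1+\cdots+u_\ell=1$ and factors the sum as
\[
G(\chi^{\ell})\cdot\sum_{\substack{u_1,\ldots,u_\ell\neq 0\\ u_1+\cdots+u_\ell=1}}\chi(u_1\cdots u_\ell).
\]
The inner sum is an iterated Jacobi sum, which by the standard peeling-off procedure (summing one variable at a time and invoking $J(\alpha,\beta)=G(\alpha)G(\beta)/G(\alpha\beta)$) equals $\prod_{i=1}^{\ell-1}J(\chi,\chi^i)/G(\chi)$ up to a constant, i.e. collapses to a product of $\ell-1$ Jacobi sums. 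The remaining step is to recognise this product as $\chi^\ell(\ell)^{-1}\prod_{i=1}^{\ell-1}J(\chi,\theta^i)$; the factor $\chi^\ell(\ell)$ is produced by the rescaling substitution that matches the auxiliary character $\chi^i$ against the prescribed $\theta^i$ of order $\ell$.

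The main obstacle will be the combinatorial bookkeeping in the unwinding of the iterated Jacobi sum and pinning down the precise origin of the normalisation $\chi^\ell(\ell)$, which enters only through a single carefully chosen rescaling step and is easy to mis-track. Two side issues also require attention: the boundary contribution from $s=0$ in the expansion of $G(\chi)^{\ell}$, which vanishes by a direct character-orthogonality argument using $\chi\neq 1$; and the degenerate case $\chi=\theta^{-i_0}$ for some $i_0\in\{1,\ldots,\ell-1\}$, where the Gauss--Jacobi identity fails at index $i_0$ because $\chi\theta^{i_0}$ is principal. In that exceptional case I would verify both sides by direct computation using $G(1)=-1$ and the explicit form of $J(\chi,\chi^{-1})=-\chi(-1)$.
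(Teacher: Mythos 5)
The paper does not actually prove this statement: it is the Hasse--Davenport product relation, quoted verbatim from \cite[Theorem~11.3.5]{BEW97} and used as a black box. So the only question is whether your argument stands on its own, and it does not: there is a genuine gap at the decisive step. Your reduction to the Jacobi-sum form $\chi^{\ell}(\ell)\,G(\chi)^{\ell}=G(\chi^{\ell})\prod_{i=1}^{\ell-1}J(\chi,\theta^{i})$ is fine, but the proposed evaluation of $G(\chi)^{\ell}$ is circular. Writing $G(\chi)^{\ell}=\sum_{s}(\cdots)$ and substituting $x_{j}=su_{j}$ for $s\neq 0$ gives $G(\chi^{\ell})\cdot J_{\ell}(\chi)$ with $J_{\ell}(\chi)=\sum_{u_{1}+\cdots+u_{\ell}=1}\chi(u_{1}\cdots u_{\ell})$, and the standard peeling-off identity evaluates this multiple Jacobi sum as $\prod_{i=1}^{\ell-1}J(\chi^{i},\chi)=G(\chi)^{\ell}/G(\chi^{\ell})$. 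Thus the $s\neq 0$ part returns exactly $G(\chi)^{\ell}$ and the whole computation is a tautology: at no point does the auxiliary character $\theta$ enter, and nothing produces the normalisation $\chi^{\ell}(\ell)$. The ``rescaling substitution that matches $\chi^{i}$ against $\theta^{i}$'' that you invoke is precisely the content of the theorem --- the assertion that $\prod_{i=1}^{\ell-1}J(\chi,\chi^{i})$ and $\chi^{-\ell}(\ell)\prod_{i=1}^{\ell-1}J(\chi,\theta^{i})$ agree --- and no such change of variables exists in the sum defining $J_{\ell}(\chi)$. For $\ell=2$ the missing step can be supplied by the classical substitution $t\mapsto(1+s)/2$ in $J(\chi,\chi)$, giving $J(\chi,\chi)=\chi^{-1}(4)J(\chi,\eta)$; but for $\ell\geq 3$ there is no analogous elementary manoeuvre, and the known proofs go through genuinely heavier machinery (Stickelberger's factorization or the Gross--Koblitz formula to pin down the prime ideal decomposition of both sides, followed by a root-of-unity determination, or the original Davenport--Hasse analytic argument).

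Two smaller points. First, your claim that the $s=0$ boundary term vanishes ``by orthogonality using $\chi\neq 1$'' is inaccurate: after the substitution $x_{j}=x_{\ell}u_{j}$ that term carries a factor $\sum_{x_{\ell}}\chi^{\ell}(x_{\ell})$, so it vanishes only when $\chi^{\ell}$ is nonprincipal; when $\chi$ is a power of $\theta$ it contributes and must be tracked. Second, your handling of the degenerate indices $i_{0}$ with $\chi\theta^{i_{0}}$ principal (direct verification via $G(1)=-1$ and $J(\chi,\chi^{-1})=-\chi(-1)$) is a reasonable plan, but it is moot until the main identity is actually established for the generic case.
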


The Stickelberger theorem on the prime ideal factorization of Gauss sums gives us $p$-adic information on Gauss sums. We will need this theorem to prove a certain divisibility result later on. Let $p$ be a prime, $q=p^f$, and let $\xi_{q-1}$ be a complex primitive $(q-1)$th root of unity. Fix any prime ideal $\mathfrak{P}$ in $\Z[\xi_{q-1}]$ lying over $p$. Then $\Z[\xi_{q-1}]/\mathfrak{P}$ is a finite field of order $q$, which we identify with $\F_q$. Let $\omega_{\mathfrak{P}}$ be the
Teichm\"uller character on $\F_q$, i.e., an isomorphism
$$\omega_{\mathfrak{P}}: \F_q^{*}\rightarrow
\{1,\xi_{q-1},\xi_{q-1}^2,\dots ,\xi_{q-1}^{q-2}\}$$
satisfying
\begin{equation}\label{eq2.3}
\omega_{\mathfrak{P}}(\alpha)\quad ({\rm
mod}\hspace{0.1in}{\mathfrak{P}})=\alpha,
\end{equation}
for all $\alpha$ in $\F_q^*$.
The Teichm\"uller character $\omega_{\mathfrak{P}}$ has order $q-1$. Hence it generates all multiplicative characters of $\F_q$.

Let $\cP$ be the prime ideal of $\Z[\xi_{q-1},\xi_p]$ lying above
$\mathfrak{P}$. For an integer $a$, let
$$s(a)=\nu _{\cP}(G(\omega_{\mathfrak P}^{-a})),$$
where $\nu_{\cP}$ is the $\cP$-adic valuation.
Thus ${\cP}^{s(a)}||G(\omega_{\mathfrak P}^{-a})$.
The following evaluation of
$s(a)$ is due to Stickelberger (see \cite[p.~344]{BEW97}).

 \begin{theorem}\label{stick}
Let $p$ be a prime and $q=p^f$.
For an integer $a$ not divisible by $q-1$,
let $a_0+a_1p+a_2p^2+\cdots +a_{f-1}p^{f-1}$, $0\leq a_i\leq p-1$,
be the $p$-adic expansion of the reduction of $a$ modulo $q-1$.
Then
$$s(a)=a_0+a_1+\cdots +a_{f-1},$$
that is, $s(a)$ is the sum of the $p$-adic digits of the reduction of $a$ modulo $q-1$.
\end{theorem}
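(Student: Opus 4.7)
The plan is to prove the sharper Stickelberger congruence
$$G(\omega_{\mathfrak P}^{-a}) \equiv -\frac{\pi^{s(a)}}{\prod_{i=0}^{f-1} a_i!} \pmod{\cP^{s(a)+1}}, \qquad \pi := \xi_p-1,$$
from which Theorem~\ref{stick} is immediate: $\nu_\cP(\pi)=1$ and each $a_i!$ is a $p$-adic unit since $0\leq a_i\leq p-1$.

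I would first dispatch the prime-field case $f=1$ by a direct expansion. For $1\leq a\leq p-2$, substitute $\xi_p^t=(1+\pi)^t=\sum_k \binom{t}{k}\pi^k$ in
$$G(\omega_p^{-a})=\sum_{t=1}^{p-1}\omega_p^{-a}(t)\,\xi_p^t,$$
exchange the $t$- and $k$-sums, apply the Teichm\"uller congruence $\omega_p^{-a}(t)\equiv t^{p-1-a}\pmod{\cP}$ from \eqref{eq2.3}, and use the elementary identity $\sum_{t=1}^{p-1} t^n\equiv -\mathbf{1}_{(p-1)\mid n}\pmod p$. Because the falling factorial $t(t-1)\cdots(t-k+1)$ has degree $k$, the inner sum for each fixed $k<a$ vanishes mod $p$, so all such terms disappear modulo $\cP^{a+1}$; only the $k=a$ term survives and contributes $-\pi^a/a!$.

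For general $f$ I would lift via the Hasse-Davenport product formula (Theorem~\ref{thm:Stickel2}) together with the Frobenius action $\sigma\colon\xi_{q-1}\mapsto \xi_{q-1}^p$. Two facts structure the argument. First, $\sigma$ sends $G(\omega_{\mathfrak P}^{-a})$ to $G(\omega_{\mathfrak P}^{-pa\bmod(q-1)})$, which cyclically shifts the digits $(a_0,\ldots,a_{f-1})$; since $\sigma$ preserves $\cP$ (it lies in the decomposition group of $\cP$ over the unique prime of $\Z[\xi_p]$ above $p$), both $\nu_\cP(G(\omega_\mathfrak{P}^{-a}))$ and $s(a)$ are invariant under $a\mapsto pa$. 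Second, applying Theorem~\ref{thm:Stickel2} with $\theta$ of order $p-1$ expresses $G(\omega_\mathfrak{P}^{-a})$ as a ratio of Gauss sums that peels off one $p$-adic digit at a time; inducting on $s(a)$ and feeding in the $f=1$ base case digit by digit yields the full congruence, with each digit $a_i$ contributing exactly $a_i$ to the $\cP$-adic valuation and a factor $1/a_i!$ to the constant.

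The main obstacle will be Step~2: matching the Hasse-Davenport recursion to the digit expansion $a=\sum a_i p^i$ and verifying that the denominator $\prod a_i!$ and the overall sign assemble correctly. The $f=1$ case is a routine expansion, but the general $f$ case requires a careful disentangling of the interplay between Frobenius orbits on exponents, the multiplicative structure of Gauss sums via Hasse-Davenport, and the bookkeeping of $p$-adic units modulo $\cP^{s(a)+1}$.
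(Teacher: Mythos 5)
First, note that the paper does not actually prove Theorem~\ref{stick}: it is quoted as a classical result of Stickelberger with a pointer to \cite[p.~344]{BEW97}, so there is no internal proof to compare against and you are supplying one from scratch. Your $f=1$ computation is correct and standard: expanding $\xi_p^t=(1+\pi)^t$, using $\omega_{\mathfrak{P}}^{-a}(t)\equiv t^{p-1-a}\pmod{\cP}$ and the identity $\sum_{t=1}^{p-1}t^n\equiv -1$ or $0\pmod p$ according as $(p-1)\mid n$ (with $n\neq 0$) or not, does yield $G(\omega_{\mathfrak{P}}^{-a})\equiv -\pi^{a}/a!\pmod{\cP^{a+1}}$. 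Your observation that $\sigma:\xi_{q-1}\mapsto\xi_{q-1}^{p}$ fixes $\cP$ and sends $G(\omega_{\mathfrak{P}}^{-a})$ to $G(\omega_{\mathfrak{P}}^{-pa})$, so that both $s(a)$ and the valuation are invariant under $a\mapsto pa$, is also correct.

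The general-$f$ step, however, has a genuine gap. The Hasse--Davenport product formula (Theorem~\ref{thm:Stickel2}) with $\theta$ of order $p-1$ relates $G(\omega_{\mathfrak{P}}^{-a})$ to $G(\omega_{\mathfrak{P}}^{-(p-1)a})$ and to the sums $G(\omega_{\mathfrak{P}}^{-a-i(q-1)/(p-1)})$ for $1\le i\le p-2$. Neither family of exponents isolates a single $p$-adic digit of $a$: multiplication by $p-1$ scrambles all digits through carries, and adding $i(q-1)/(p-1)=i(1+p+\cdots+p^{f-1})$ shifts every digit by $i$ simultaneously. So the advertised recursion that ``peels off one digit at a time'' does not exist as described, and the induction on $s(a)$ has no well-founded descent. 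The standard way to close the argument (essentially the proof in \cite{BEW97}) uses two ingredients your sketch omits. (i) The single-digit case $a=p^i$: reduce to $a=1$ by the Frobenius invariance you noted, then expand $G(\omega_{\mathfrak{P}}^{-1})$ directly over $\F_q$ itself, using a lift of $\Tr(\alpha)$ congruent to $\sum_{j}\omega_{\mathfrak{P}}(\alpha)^{p^j}$ modulo $\cP$, to get $G(\omega_{\mathfrak{P}}^{-1})\equiv-\pi\pmod{\cP^2}$; your $f=1$ computation alone does not supply this, since it concerns Gauss sums over $\F_p$, not $\F_q$. (ii) The inductive step: write $a=b+p^i$ with no carrying and use the Jacobi-sum identity $G(\omega_{\mathfrak{P}}^{-b})G(\omega_{\mathfrak{P}}^{-p^i})=J(\omega_{\mathfrak{P}}^{-b},\omega_{\mathfrak{P}}^{-p^i})\,G(\omega_{\mathfrak{P}}^{-a})$ together with a congruence showing the Jacobi sum is a $\cP$-adic unit (congruent mod $\cP$ to a binomial-coefficient unit, which also produces the factorials in the denominator). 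Without this Jacobi-sum step, or an equivalent substitute such as Gross--Koblitz, the passage from $f=1$ to general $f$ does not go through.
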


\section{The subset $X$ and its properties}\label{sec:setX}

\begin{notation}{\em
Let $q$ be a prime power with $q\equiv 5$ or $9\pmod{12}$ so that $\gcd(q-1,q^2+q+1)=1$. Write $E=\F_{q^3}$, $F=\F_q$, and let $\omega$ be a fixed primitive element of $E$. For any $x\in E^\ast$, we use $\log_\omega(x)$ to denote the integer $0\leq i\leq q^3-2$ such that $x=\omega^i$.   We write $N=q^2+q+1$, and let $w_1$ be an element of order $N$ in $E$ (for example, take $w_1=\omega^{q-1}$). For $x\in F^*$, we define the sign of $x$, $\sgn(x)\in F$, by
\begin{equation}\label{eqn:sgn}
\sgn(x)
=\begin{cases} 1,\quad &\textup{if $x$ is a square},\\
-1,\quad & \textup{otherwise.}\end{cases}
\end{equation}
We also define $\sgn(0):=0$.}
\end{notation}
It is the purpose of this section to introduce a subset $X\subset E$ and prove a few results on $X$, which we will need in the construction of our Cameron-Liebler line classes.

Viewing $E$ as a 3-dimensional vector space over $\F_q$, we will use $E$ as the underlying vector space of $\PG(2,q)$. The points of $\PG(2,q)$ are $\la \omega^i\ra$, $0\leq i\leq q^3-2$, and the lines of $\PG(2,q)$ are
\begin{equation}\label{eqn_Lu}
L_u:=\{\langle x\rangle:\,\Tr(\omega^ux)=0\},
\end{equation}
where $0\leq u\leq q^3-2$. Of course, $\la \omega^i\ra=\la \omega^{i+jN}\ra$ and $L_{u}=L_{u+jN}$, for any $i,j$ and $u$. Note that since $\gcd(q-1, q^2+q+1)=1$, we can also take $\la w_1^i\ra$, $0\leq i\leq q^2+q$, as the points of $\PG(2,q)$.

Define a quadratic form $f: E\rightarrow F$ by $f(x):=\tr(x^2)$, where $\tr$ is the relative trace from $E$ to $F$. The associated bilinear form $B: E\times E\rightarrow F$ is given by $B(x,y)=2\tr(xy)$. It is clear that $B$ is nondegenerate. Therefore $f$ defines a nondegenerate conic $\cQ=\{\langle x\rangle\mid f(x)=0\}$ in $\PG(2,q)$, which contains $q+1$ points. Consequently each line $\l$ of $\PG(2,q)$ meets $\cQ$ in $0$, $1$ or $2$ points, and $\l$ is called a {\it passant}, {\it tangent} or {\it secant line} accordingly. Also it is known that each point $P\in \PG(2,q) \setminus \cQ$ is on either $0$ or $2$ tangent lines to $\cQ$, and $P$ is called an {\it interior} or {\it exterior point} accordingly,  c.f. \cite[p. 158]{stan}.

\begin{lemma}\label{lem_sgn}
With the above notation, we have the following:
\begin{enumerate}
\item The tangent lines to $\cQ$ are given by $L_u$ with $\Tr(\omega^{2u})=0$, $0\leq u\leq q^3-2$.
\item The polarity of $\PG(2,q)$ induced by $\cQ$ interchanges $\la \omega^u\ra$ and $L_u$, where $\Tr(\omega^{2u})=0, 0\leq u\leq q^3-2$, and maps exterior (resp. interior) points to secant (resp. passant) lines.
\item For any point $P=\la v\ra$ off $\cQ$, $P$ is an exterior (resp. interior) point if and only if $f(v)$ has some fixed nonzero sign $\epsilon$ (resp. $-\epsilon$).
\end{enumerate}
\end{lemma}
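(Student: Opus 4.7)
The plan is to derive all three parts from the polarity of $\PG(2,q)$ induced by $f$, equivalently by the nondegenerate bilinear form $B(x,y)=2\tr(xy)$. Under this polarity the polar line of $\la v\ra$ is $v^{\perp}=\{\la x\ra:\tr(vx)=0\}$, which by \eqref{eqn_Lu} is precisely $L_{\log_{\omega}v}$; this lets me pass back and forth between polar and coordinate descriptions freely.

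For (1), I would use the classical identification of the tangent to a nondegenerate conic at $P\in\cQ$ with the polar line of $P$: taking $P=\la\omega^{u}\ra$ with $P\in\cQ$, i.e., $\Tr(\omega^{2u})=0$, the polar line of $P$ is $L_{u}$, giving the claim. For (2), the interchange $\la\omega^{u}\ra\leftrightarrow L_{u}$ is immediate from \eqref{eqn_Lu}, while the fact that the polarity sends exterior points to secants and interior points to passants is a standard conic property over $\F_{q}$, $q$ odd, to be quoted from \cite[p.~158]{stan}.

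The substantive step is (3). A point $P=\la v\ra$ off $\cQ$ is exterior if and only if the polar line $v^{\perp}$ is a secant, i.e., the restriction $f|_{v^{\perp}}$ to the $2$-dimensional subspace $v^{\perp}$ is isotropic. A binary quadratic form $g$ over $F$ is isotropic precisely when $-\det(g)$ is a square. Since $f(v)\neq 0$, we have the orthogonal direct sum $E=\la v\ra\oplus v^{\perp}$, whose block diagonal Gram matrix yields the identity
\[
\det(f)\equiv f(v)\cdot\det(f|_{v^{\perp}})\pmod{(F^{\ast})^{2}}.
\]
Consequently $\sgn(-\det(f|_{v^{\perp}}))=\sgn(f(v))\cdot\sgn(-\det(f))$, so $P$ is exterior iff $\sgn(f(v))$ equals the fixed sign $\epsilon:=\sgn(-\det(f))\in\{\pm 1\}$, which is nonzero by nondegeneracy of $f$; the interior points are then exactly those with $\sgn(f(v))=-\epsilon$. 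The one thing to be careful about is keeping normalization conventions consistent when passing between $f$ and $B=2\tr(xy)$ and extracting the discriminant, but since the lemma only asserts the existence of such an $\epsilon$, and a change of conventions merely multiplies $\epsilon$ by a square, no explicit computation of $\det(f)$ in the basis $\{1,\omega,\omega^{2}\}$ is required.
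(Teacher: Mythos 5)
Your proposal is correct. The paper itself offers no argument for this lemma—it simply refers the reader to \cite{h1} for (1) and (2) and to the proof of Theorem 4.3.1 in \cite{stan} for (3)—so what you have written is essentially the standard proof that those references contain, now made explicit. Parts (1) and (2) are handled the same way in both cases (tangent at $P\in\cQ$ equals the polar of $P$; polarity sends exterior points to secants), and your part (3) is the right argument: since $f(v)\neq 0$ forces the orthogonal splitting $E=\la v\ra\perp v^{\perp}$, the form $f|_{v^{\perp}}$ is a nondegenerate binary form, isotropic exactly when $-\det(f|_{v^{\perp}})$ is a square, and the block-diagonal Gram matrix gives $\det(f)\equiv f(v)\det(f|_{v^{\perp}})$ modulo squares, so exterior points are exactly those with $\sgn(f(v))=\sgn(-\det(f))=:\epsilon$. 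Your remark that normalization only changes $\epsilon$ by a square is also the right thing to say here; note in addition that since $q\equiv 1\pmod 4$ the element $-1$ is a square, so $\epsilon=\sgn(\det f)$ as well, which is consistent with how the paper later pins down $\epsilon=1$ in Lemma 3.3 via the Gram determinant $2\det(M)^2$.
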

\begin{proof}(1) and (2) are well-known facts, and we refer the reader to \cite{h1} for proofs. (3) is clear from \cite[p. 166]{stan} in the proof of Theorem 4.3.1 therein. \qed

\end{proof}
\vspace{0.3cm}

Now we define the following subset of $\Z_N$:
\begin{equation}\label{eqn_IQ}
I_\cQ:=\{i:\,0\leq i\leq N-1,\,\tr(w_1^{2i})=0\}=\{d_0,d_1,\ldots, d_{q}\}.
\end{equation}
where the elements are numbered in any (unspecified) order. That is, $\cQ=\{\langle w_1^{d_i}\rangle\mid 0\leq i\leq q\}$.

Let $\beta$ be any element of $F^\ast$ such that $\sgn(\beta)=\epsilon$, with $\epsilon$ as defined in Lemma \ref{lem_sgn}. For $d_0\in I_\cQ$, we define
\begin{equation}\label{eqn_defX}
X:=\{w_1^{d_i}\Tr(w_1^{d_0+d_i}):\,1\leq i\leq q\}\cup\{2 \beta w_1^{d_0}\}
\end{equation}
and
\begin{equation}\label{eqn_defX}
{\overline X}:=\{\log_{\omega}(x)\,(\mod{2N}):\, x\in X\}\subset \Z_{2N}.
\end{equation}

\begin{lemma}\label{lem_sgn_prod} Let $d_i,d_j,d_k$ be three distinct elements of $I_\cQ$.  Then the sign of $$B(w_1^{d_i},w_1^{d_j})B(w_1^{d_i},w_1^{d_k})B(w_1^{d_j},w_1^{d_k})$$ is equal to the sign of $f(v)$ for any exterior point $\la v\ra$. In other words, $2\Tr(w_1^{d_i+d_j})\Tr(w_1^{d_i+d_k})\Tr(w_1^{d_j+d_k})$ has sign $\epsilon$, where $\epsilon$ is the same as in part (3) of Lemma \ref{lem_sgn}. In particular, $\epsilon=1$.
\end{lemma}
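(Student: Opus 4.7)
My plan is to reduce both assertions to two computations: an explicit identification of an exterior point whose $f$-value equals $-2\tau_{ij}\tau_{ik}\tau_{jk}$ up to a square, and a Gram-determinant comparison for the trace form.

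Set $v_s := w_1^{d_s}$ and $\tau_{st} := \Tr(v_sv_t)$ for $s,t\in\{i,j,k\}$. Three points on a nondegenerate conic are never collinear, so $(v_i,v_j,v_k)$ is an $F$-basis of $E$. Let $w$ be the pole of the secant line $P_iP_j$ under the polarity of $\cQ$; by Lemma~\ref{lem_sgn}(2), $w$ is an exterior point, so $\sgn(f(w))=\epsilon$. Writing $w=av_i+bv_j+cv_k$ and imposing $B(w,v_i)=B(w,v_j)=0$ (using $B(v_s,v_s)=2\Tr(v_s^2)=0$ because each $v_s\in\cQ$) forces $w$ to be proportional to $-\tau_{jk}v_i-\tau_{ik}v_j+\tau_{ij}v_k$. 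Expanding $f(w)=\Tr(w^2)$ in this basis, the diagonal terms $\Tr(v_s^2)$ all vanish and the cross terms collapse to $f(w)=-2\tau_{ij}\tau_{ik}\tau_{jk}$, up to a nonzero square coming from the scaling of $w$. Since the hypothesis $q\equiv 5,9\pmod{12}$ forces $q\equiv 1\pmod 4$, $-1$ is a square in $F$, and hence $\sgn(f(w))=\sgn(2\tau_{ij}\tau_{ik}\tau_{jk})$, which proves the first assertion.

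To show $\epsilon=1$, I would compare two Gram matrices of $B$. In the basis $(v_i,v_j,v_k)$ the Gram matrix $G$ has zero diagonal and off-diagonals $2\tau_{st}$, and a direct $3\times 3$ expansion gives $\det(G)=16\tau_{ij}\tau_{ik}\tau_{jk}$. In the power basis $(1,\omega,\omega^2)$ of $E$ the Gram matrix factors as $B_0=2A^{\top}A$, where $A_{ks}=\omega^{q^ks}$ is a Vandermonde-type matrix. Since Frobenius acts on $A$ by cyclic permutation of the rows (an even permutation), $\det(A)$ is Frobenius-fixed and lies in $F^*$; thus $\det(B_0)=8\det(A)^2$ has sign $\sgn(8)=\sgn(2)$. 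Two Gram matrices of the same bilinear form in distinct $F$-bases of $E$ are related by $G=M^{\top}B_0M$ for an invertible change-of-basis matrix $M$, so their determinants differ by $\det(M)^2\in(F^*)^2$, giving $\sgn(\det(G))=\sgn(\det(B_0))=\sgn(2)$. Thus $\sgn(\tau_{ij}\tau_{ik}\tau_{jk})=\sgn(2)$ and $\sgn(2\tau_{ij}\tau_{ik}\tau_{jk})=\sgn(2)^2=1$, so $\epsilon=1$.

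The main difficulty will be the careful sign bookkeeping. The first part essentially relies on $-1\in(F^*)^2$ to match the signs of $f(w)=-2\tau_{ij}\tau_{ik}\tau_{jk}$ and $2\tau_{ij}\tau_{ik}\tau_{jk}$, and the second on the fact that the discriminant of the trace form is a square in $F^*$, which we extract from the Vandermonde structure of $A$ together with the evenness of $3$-cycles. The remaining ingredients — the intersection-of-tangents description of the pole of a secant, the vanishing of $B(v_s,v_s)$ on the conic, and the Gram-determinant expansion — are routine bilinear-form manipulations.
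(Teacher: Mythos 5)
Your proof is correct and follows essentially the same route as the paper: the paper likewise takes as its exterior point the intersection of the tangents at $\la w_1^{d_i}\ra$ and $\la w_1^{d_j}\ra$ (i.e.\ the pole of the secant through them), and likewise proves $\epsilon=1$ by factoring the Gram matrix of $B$ as $2MM^{\top}$ for a conjugate matrix $M$ whose determinant is Frobenius-fixed, hence lies in $F$. The only cosmetic differences are that you evaluate $f(w)$ directly in coordinates where the paper compares the determinants of two Gram matrices (using that the discriminant is well defined up to squares), and that you pass through the power basis rather than factoring the Gram matrix in the basis $w_1^{d_i},w_1^{d_j},w_1^{d_k}$ itself.
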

\begin{proof} Since $\cQ$ is a conic, $w_1^{d_i},w_1^{d_j},w_1^{d_k}$ are linearly independent over $F$ and thus form a basis of $E$ over $F$.  The Gram matrix of the bilinear form $B$ with respect to this basis is equal to
\begin{align}\label{eqn_mat}
\begin{pmatrix}
B(w_1^{d_i},w_1^{d_i})&B(w_1^{d_i},w_1^{d_j})&B(w_1^{d_i},w_1^{d_k})\\
B(w_1^{d_j},w_1^{d_i})&B(w_1^{d_j},w_1^{d_j})&B(w_1^{d_j},w_1^{d_k})\\
B(w_1^{d_k},w_1^{d_i})&B(w_1^{d_k},w_1^{d_j})&B(w_1^{d_k},w_1^{d_k})
\end{pmatrix}
\end{align}
which is symmetric with diagonal entries equal to $0$. Its determinant is equal to
\[2B(w_1^{d_i},w_1^{d_j})B(w_1^{d_i},w_1^{d_k})B(w_1^{d_j},w_1^{d_k}).\]

Let $P=\la v\ra$ be an exterior point, say, $P$ is the intersection of the tangent lines through $\la w_1^{d_i}\ra$ and $\la w_1^{d_j}\ra$. Then $B(v,w_1^{d_i})=B(v,w_1^{d_j})=0$, and the Gram matrix with respect to the basis $v, w_1^{d_i},w_1^{d_j}$  has determinant $-2f(v)B(w_1^{d_i},w_1^{d_j})^2$. Since $q\equiv 1\pmod{4}$, $-1$ is a square in $\F_q^*$. By \cite[p. 262]{stan}, the two determinants have the same sign. This proves the first part of the lemma. It remains to prove that $\epsilon=1$. We observe that the matrix \eqref{eqn_mat} can be written as $2MM^{\top}$ with
\[
M=\begin{pmatrix}
w_1^{d_i}&w_1^{qd_i}&w_1^{q^2d_i}\\
w_1^{d_j}&w_1^{qd_j}&w_1^{q^2d_j}\\
w_1^{d_k}&w_1^{qd_k}&w_1^{q^2d_k}
\end{pmatrix}.
\]
We claim that the determinant of $M$, $\det(M)$, is in $F$. To see this, applying the Frobenius automorphism $\sigma:\,x\mapsto x^{q}$ of $Gal(E/F)$ to $M$ entry-wise, we get
\[
\sigma(\det(M))=\det(\sigma(M))=\begin{vmatrix}
w_1^{qd_i}&w_1^{q^2d_i}&w_1^{d_i}\\
w_1^{qd_j}&w_1^{q^2d_j}&w_1^{d_j}\\
w_1^{qd_k}&w_1^{q^2d_k}&w_1^{d_k}
\end{vmatrix}=\det(M).
\]
The claim that $\epsilon=1$ now follows from this fact and the first part of the lemma.
\qed
\end{proof}

\begin{lemma}\label{lem_def_X}
With the above notation, if we use any other $d_i$ in place of $d_0$ in the definition of $X$, then the resulting set $X'$ satisfies that
$\overline{X'}\equiv \overline{X}\,(\mod{2N})$ or $\overline{X'}\equiv \overline{X}+N\,(\mod{2N})$.
\end{lemma}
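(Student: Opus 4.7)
The plan is to match the elements of $X$ and $X'$ (where $X'$ denotes the set obtained by choosing some fixed $d_k$, $k\in\{1,\ldots,q\}$, in place of $d_0$) by their common $w_1$-exponent, and to show that every matched pair has a ratio lying in one single square class of $F^\ast$. Coupled with the observation that $\log_\omega(\mu)\pmod{2N}$ equals $0$ or $N$ according as $\mu\in F^\ast$ is a square or a non-square, this forces $\overline{X'}\equiv\overline{X}\pmod{2N}$ or $\overline{X'}\equiv\overline{X}+N\pmod{2N}$, as required.

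The preparatory facts are as follows. Since $q$ is odd, $2N$ divides $q^3-1$, so reduction mod $2N$ of $\log_\omega$ is well defined on $E^\ast$. The subfield $F^\ast$ is $\langle\omega^N\rangle$, whose group of squares is $\langle\omega^{2N}\rangle$; hence $\log_\omega(\mu)\equiv 0\pmod{2N}$ iff $\mu\in(F^\ast)^2$, and $\log_\omega(\mu)\equiv N\pmod{2N}$ iff $\mu\in F^\ast\setminus(F^\ast)^2$. Write $X=\{a_0,a_1,\ldots,a_q\}$ and $X'=\{a_0',a_1',\ldots,a_q'\}$, where $a_i$ (resp.\ $a_i'$) is the unique element of $X$ (resp.\ $X'$) with $w_1$-part $w_1^{d_i}$. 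For $i\notin\{0,k\}$ we have $a_i=w_1^{d_i}\Tr(w_1^{d_0+d_i})$ and $a_i'=w_1^{d_i}\Tr(w_1^{d_k+d_i})$, while the two exceptional pairs are $(a_k,a_k')=(w_1^{d_k}\Tr(w_1^{d_0+d_k}),\,2\beta w_1^{d_k})$ and $(a_0,a_0')=(2\beta w_1^{d_0},\,w_1^{d_0}\Tr(w_1^{d_k+d_0}))$.

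The crux is to show that the square class $[a_i'/a_i]\in F^\ast/(F^\ast)^2$ is the same element, independent of $i$. Set $c:=2\Tr(w_1^{d_0+d_k})\in F^\ast$. For $i\notin\{0,k\}$, $a_i'/a_i=\Tr(w_1^{d_k+d_i})/\Tr(w_1^{d_0+d_i})$, which has the same square class as the product $\Tr(w_1^{d_0+d_i})\Tr(w_1^{d_k+d_i})$; by Lemma \ref{lem_sgn_prod} applied to the triple $(d_0,d_i,d_k)$, the quantity $2\Tr(w_1^{d_0+d_i})\Tr(w_1^{d_0+d_k})\Tr(w_1^{d_i+d_k})$ lies in $(F^\ast)^2$, so dividing out by the square $\Tr(w_1^{d_0+d_k})^2$ yields
\[
[\Tr(w_1^{d_0+d_i})\Tr(w_1^{d_k+d_i})]=[1/c]=[c].
\]
For $i=k$, $a_k'/a_k=2\beta/\Tr(w_1^{d_0+d_k})$ has square class $[c][\beta]$, and $[\beta]=[1]$ because $\sgn(\beta)=\epsilon=1$ by Lemma \ref{lem_sgn_prod}; the case $i=0$ gives $a_0'/a_0=\Tr(w_1^{d_0+d_k})/(2\beta)$ with square class $[c][\beta]=[c]$ by the same reasoning.

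Assembling these calculations, $\log_\omega(a_i')-\log_\omega(a_i)\pmod{2N}$ is uniformly $0$ if $c\in(F^\ast)^2$ and uniformly $N$ otherwise, so the bijection $a_i\mapsto a_i'$ between $X$ and $X'$ descends to the identity on $\Z_{2N}$ or to the translation by $N$, proving the claim. I expect the main bookkeeping obstacle to be harmonising the two exceptional indices $i=0$ and $i=k$, whose matched partners involve the coefficient $2\beta$ in place of a trace; those cases go through cleanly precisely because Lemma \ref{lem_sgn_prod} guarantees $\epsilon=1$, so $\beta$ is a square and drops out of every square-class computation.
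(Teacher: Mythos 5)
Your proof is correct and follows essentially the same route as the paper: you pair the elements of $X$ and $X'$ by their common $w_1$-exponent, use Lemma \ref{lem_sgn_prod} (including the fact $\epsilon=1$, so $\beta$ is a square) to show every ratio $a_i'/a_i$ lies in the single square class $[2\Tr(w_1^{d_0+d_k})]$ of $F^\ast$, and then translate this into a shift by $0$ or $N$ modulo $2N$. Your write-up is somewhat more explicit about the square-class bookkeeping for the two exceptional indices, but the underlying argument is identical to the paper's.
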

\begin{proof} Without loss of generality, we assume that we use $d_1$ in place of $d_0$ in the definition of $X$, and obtain  $X'=\{w_1^{d_i}\Tr(w_1^{d_1+d_i}):\,0\leq i\leq q, \;i\ne 1\}\cup\{2 \beta  w_1^{d_1}\}$. We have the following observations.
\begin{enumerate}
\item For $i\ne 0,1$, that is, $2\leq i\leq q$, the sign of the quotient of $w_1^{d_i}\Tr(w_1^{d_0+d_i})$ and  $w_1^{d_i}\Tr(w_1^{d_1+d_i})$ is a constant: their quotient $\Tr(w_1^{d_0+d_i})\Tr(w_1^{d_1+d_i})^{-1}$ lies in $F$, and its sign is clearly equal to that of $\Tr(w_1^{d_0+d_i})\Tr(w_1^{d_1+d_i})$; this sign is equal to  $\sgn(2\tr(w_1^{d_0+d_1}))$ by Lemma \ref{lem_sgn_prod}.

\item The sign of the quotient of $2 \beta  w_1^{d_1} $ and  $w_1^{d_1}\Tr(w_1^{d_0+d_1})$ is equal to $\sgn(2\beta\Tr(w_1^{d_1+d_0}))=\sgn(2\Tr(w_1^{d_1+d_0}))$, since we have chosen $\beta$ such that $\sgn(\beta)=\epsilon=1$. Similarly the sign of the quotient of $2 \beta  w_1^{d_0} $ and $w_1^{d_0}\Tr(w_1^{d_1+d_0})$ is  equal to $\sgn(2\Tr(w_1^{d_1+d_0}))$.
\end{enumerate}
The above observations imply that there is a pairing of the elements of $X'$ and $X$, say, $p: X'\rightarrow X$ a bijection, such that $x'/p(x')$ are nonzero squares in $F$ for all $x'\in X'$ or $x'/p(x')$ are nonsquares in $F$ for all $x'\in X'$. Upon taking logarithm and modulo $2N$ we get the conclusion of the lemma.
\qed
\end{proof}
\vspace{0.3cm}
\begin{remark}\label{rem_aut_pre}It is clear that $d_0':=qd_0\pmod{N}$ is an element of $I_\cQ$. In Lemma \ref{lem_def_X}, consider the special case where we replace $d_0$ by $d_0'$ in the definition of $X$, and denote the resulting set by $X'$. We observe that
\begin{align*}
2\Tr(w_1^{d_0'+d_0})&=\Tr(w_1^{d_0})^2-\Tr(w_1^{2d_0})-\Tr(w_1^{2d_0})^q=(\Tr(w_1^{d_0})^2.
\end{align*} 
Consequently $\sgn(2\Tr(w_1^{d_0'+d_0}))=1$. It follows that $\overline{X'}\equiv \overline{X}\,(\mod{2N})$ in this particular case. By definition, we have $\overline{X'}\equiv q\overline{X}\pmod{N}$, so we have shown that the subset
$\overline{X}$ is invariant under multiplication by $q$. This fact will be needed in the next section when we discuss automorphism groups of the newly construected Cameron-Liebler line classes.
\end{remark}

We now prove some properties of the set $X$ which will be needed in the next section.  Let $S$ (resp. $N$) be the set of nonzero squares (resp. nonsquares) of $F$, and write $$s=\sum_{x\in S}\psi_F(x), \;n=\sum_{x\in N}\psi_F(x),$$
 where $\psi_F$ is the canonical additive character of $F$. For $0\leq u\leq q^3-2$, we define the exponential sums
\begin{equation}
T_u:=\sum_{i\in \overline{X}}\sum_{x\in S}\psi_F(\Tr(\omega^{u+i})x)\notag
\end{equation}
 To simplify notation, we often write $T_u= \sum_{i\in \overline{X}}\psi_F(\Tr(\omega^{u+i})S).$ Note that $i\in \overline{X}$ if and only if $i\equiv \log_{\omega}(x)\pmod {2N}$ for some $x\in X$, which in turn is equivalent to $x=\omega^{i+2Nj}$ for some integer $j$. Since $\omega^{2Nj}$ is an element of $S$, we can view $\omega^i$ in the above definition of $T_u$ as coming from $X$. It follows that
\begin{equation}\label{eqn_Tu}
T_u=\psi_F(\tr(2\beta \omega^uw_1^{d_0})S)+\sum_{i=1}^q\psi_F(\tr(\omega^u w_1^{d_i})\tr(w_1^{d_i+d_0})S)
\end{equation}
We will evaluate these sums explicitly.

\begin{remark}\label{rem_Tu} {\em The following are some simple observations.
\begin{enumerate}
\item It is clear that each summand in the right hand side of (\ref{eqn_Tu}) is equal to one of $|S|=\frac{q-1}{2}$, $s$ and $n$, depending on the sign of the trace term in front of $S$.
\item If we replace $X$ by $X'$ as in Lemma~\ref{lem_def_X}, then the value of $T_u$ is either unchanged or is equal to $T_{u+N}$.
\end{enumerate}}
\end{remark}

\begin{theorem} \label{thm:main2}Let $T=\{\log_{\omega}(x)\,(\mod{N})\,|\,
\Tr(x)=0\}$ and $T'=\{\log_\omega(x)\,(\mod{2N})\,|\, \Tr(x)=1\}$.
The exponential sums $T_u$ take the following four values:
\[
T_u=\left\{
\begin{array}{ll}
\frac{q-1}{2}+qs \mbox{ or } \frac{q-1}{2}+qn, & \mbox{ if $2u \;(\mod{N}) \in T$,}\\
-\frac{q+1}{2}, & \mbox{ if $2u \;(\mod{N})\not \in T$ and  $2u \; (\mod{2N})\not \in T'$}, \\
\frac{q-1}{2}, & \mbox{ if $2u \;(\mod{N})\not \in T$ and  $2u \; (\mod{2N}) \in T'$,}
 \end{array}
\right.
\]
where $s$ and $n$ are defined as above, that is, $s=\sum_{x\in S}\psi_F(x)$ and  $n=\sum_{x\in N}\psi_F(x)$.
\end{theorem}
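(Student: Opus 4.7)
The plan is to expand $T_u$ using the standard identity $\psi_F(aS) = \frac{q-1}{2}$ for $a = 0$ and $\psi_F(aS) = \frac{-1 + \eta(a) G(\eta)}{2}$ for $a \neq 0$, where $\eta$ is the quadratic character of $F$. Writing $Z_u := \#\{i \in \overline X : \Tr(\omega^{u+i}) = 0\}$ and $R_u := \sum_{i \in \overline X} \eta(\Tr(\omega^{u+i}))$ (with $\eta(0) = 0$), this yields $T_u = \frac{qZ_u - q - 1}{2} + \frac{G(\eta)}{2} R_u$. Since every $x \in X$ represents a point of the conic $\cQ$, $Z_u$ equals the number of conic points on the polar line $L_u$ of $\la\omega^u\ra$, which by Lemma~\ref{lem_sgn} is $0$, $1$, or $2$ exactly when $\la\omega^u\ra$ is interior, on $\cQ$, or exterior. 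Via the identity $\Tr(\omega^{2u}) = f(\omega^u)$ and Lemma~\ref{lem_sgn}(3), this trichotomy coincides with the three cases in the theorem, and it remains to evaluate $R_u$ in each.

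For the on-conic case ($Z_u = 1$), I would write $\omega^u = \lambda w_1^{d_j}$ for suitable $j, \lambda$ and expand $R_u$ summand-by-summand. For $j = 0$, each nonzero summand reduces to $\eta(\lambda \Tr(w_1^{d_0+d_k})^2) = \eta(\lambda)$; for $j \geq 1$, Lemma~\ref{lem_sgn_prod} applied to the triple $\{0, j, k\}$ together with $\eta(\beta) = 1$ turns each nonzero summand into $\eta(2\lambda \Tr(w_1^{d_0+d_j}))$. Either way $R_u = \pm q$, which via the decomposition above yields the values $\frac{q-1}{2} + qs$ and $\frac{q-1}{2} + qn$.

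The heart of the proof is the off-conic case, where one must show $R_u = 0$. Define rescaled representatives $w_0' := 2\beta w_1^{d_0}$ and $w_k' := \Tr(w_1^{d_0+d_k})\, w_1^{d_k}$ for $1 \leq k \leq q$. Lemma~\ref{lem_sgn_prod}, combined with $\eta(\beta) = 1$, shows that each pairwise inner product $B(w_j', w_k') := 2\Tr(w_j' w_k')$ (for $j \neq k$) is a nonzero square in $F$, and one computes $R_u = \eta(2) \sum_{k=0}^q \eta(B(\omega^u, w_k'))$. To show this sum vanishes when $f(v) \neq 0$, I would parameterize $\cQ \setminus \{\la w_0'\ra\}$ by projection from $\la w_0'\ra$: pick $u_0, u_1 \in E$ with $B(w_0', u_0) = 1$, $B(w_0', u_1) = 0$, $u_1 \notin \la w_0'\ra$, and set $r(t) := -f(u_0 + tu_1)\, w_0' + (u_0 + tu_1)$ for $t \in F$. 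Direct bilinear computations give $B(r(s), r(t)) = -f(u_1)(s - t)^2$ and $B(r(t), w_0') = 1$, and the pairwise-square condition then forces $w_k' = \rho_k r(t_k)$ with $\eta(\rho_k) = 1$ for each $k \geq 1$, as well as $\eta(-f(u_1)) = 1$. The latter is automatic: any $\la u_1\ra$ on the tangent at $\la w_0'\ra$ is an exterior point (so $\eta(f(u_1)) = 1$), and $\eta(-1) = 1$ since $q \equiv 1 \pmod 4$. Splitting $\sum_k \eta(B(v, w_k')) = \eta(B(v, w_0')) + \sum_{t \in F} \eta(B(v, r(t)))$ and observing that $B(v, r(t))$ is a quadratic in $t$ whose discriminant computes to $4f(u_1)f(v) \neq 0$, the classical identity $\sum_{t \in F} \eta(\alpha t^2 + \beta t + \gamma) = -\eta(\alpha)$ (valid when $\alpha \neq 0$ and the discriminant is nonzero) produces exact cancellation of the two contributions; the degenerate sub-case $B(v, w_0') = 0$ reduces to a linear polynomial whose $\eta$-sum over $F$ vanishes. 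The principal obstacle is this off-conic vanishing, which depends crucially on the specific rescaling built into $X$ and is precisely what Lemma~\ref{lem_sgn_prod} was designed to control.
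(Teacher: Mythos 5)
Your proof is correct, and while its skeleton matches the paper's --- each summand $\psi_F(aS)$ is $\tfrac{q-1}{2}$, $s$ or $n$ according to $\sgn(a)$, the three cases are matched with the tangent/passant/secant trichotomy for $L_u$, and Lemma~\ref{lem_sgn_prod} controls the rescalings built into $X$ --- the treatment of the two off-conic cases is genuinely different. The paper handles them by two separate synthetic arguments: for a passant line it pairs the points of $\cQ$ along the secants through the interior point $\la\omega^u\ra$ and shows each pair contributes $s+n=-1$; for a secant line it first normalizes (via Remark~\ref{rem_Tu}(2)) so that $\la\omega^u\ra$ lies on the tangent at $\la w_1^{d_0}\ra$ and then observes that the ratios $\lambda_i$ sweep out all of $F$. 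You instead prove the single unified statement $R_u=0$ whenever $f(\omega^u)\neq0$, by rationally parameterizing $\cQ\setminus\{\la w_0'\ra\}$ as $\la r(t)\ra$, $t\in F$, and invoking the classical evaluation $\sum_{t\in F}\eta(\alpha t^2+\beta t+\gamma)=-\eta(\alpha)$, where $\eta$ is the quadratic character of $F$; the nonvanishing of the discriminant is exactly the statement that the polar line of a non-conic point is never tangent, so no separate interior/exterior analysis and no normalization of $d_0$ is needed. I checked the steps you deferred: $B(r(s),r(t))=-f(u_1)(s-t)^2$ is a direct expansion; $\eta(\rho_k)=1$ follows from $B(w_0',w_k')=\rho_k$ being a nonzero square; $\eta(-f(u_1))=1$ holds because $\la u_1\ra$ is exterior and $q\equiv1\pmod4$; and in the degenerate sub-case $B(v,w_0')=0$ one has $B(v,u_1)\neq0$ (otherwise $v$ would be orthogonal to the whole tangent line, forcing $v\in\la w_0'\ra$), so the linear character sum vanishes. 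What your route buys is a uniform, normalization-free proof that makes it transparent why both off-conic cases give the same answer; what the paper's route buys is elementarity, since it needs nothing beyond $s+n=-1$ and incidence counting.
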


\begin{proof}
 We consider the following three cases according to the line $L_u$ as defined in \eqref{eqn_Lu} is a tangent, passant, or secant.

{\bf Case 1: $L_u$ is a tangent line.} In this case, $\omega^u$ is a zero of $f$ by (1) of Lemma \ref{lem_sgn}, where we recall that $f(x):=\Tr(x^2)$, so $\la \omega^u\ra=\la w_1^{d_k}\ra$ for some $0\leq k\leq q$ by the definition of $I_\cQ$ in Eqn. \eqref{eqn_IQ}.
Note that $u$ satisfies $2u \;(\mod{N}) \in T$ by the definition of $f$ and $T$. In view of (2) of Remark \ref{rem_Tu}, we may assume that $d_k=d_0$.
(If necessary, replace $u$ by $u+N$, and then the resulting $\omega^u$ is still
a zero of $f$, i.e., satisfies $2u\; (\mod{N}) \in T$.) Now $\tr(\omega^u w_1^{d_i})=0$ if and only if $\la w_1^{d_i}\ra$ lies on the tangent line $L_u$, i.e., $d_i=d_0$. We see that the elements  $\tr(w_1^{d_0+d_i})^2$, $i\ne 0$, are all nonzero squares, so that $T_u=\frac{q-1}{2}+qs$ or $\frac{q-1}{2}+qn$ by (1) of Remark \ref{rem_Tu}.
Note that if we replace $u$ by $u+N$, then the value of $T_u$ is replaced by the other in this case.  Hence, $\{T_u,T_{u+N}\}=\{\frac{q-1}{2}+qs,\frac{q-1}{2}+qn\}$.

{\bf Case 2: $L_u$ is a passant line.} In this case, $\la \omega^u\ra$ is an interior point and thus $f(\omega^u)$ has sign $-\epsilon(=-1)$ by Lemmas~\ref{lem_sgn}
and \ref{lem_sgn_prod}.
Note that $u$ satisfies $2u \;(\mod{N})\not \in T$ and  $2u\; (\mod{2N})\not \in T'$ since $f(\omega^u)$ is a nonsquare of $F$. Each line through $\la \omega^u\ra$ has either $0$ or $2$ points of $\cQ$, so the points of $\cQ$ are partitioned into pairs accordingly.  Let $\la w_1^{d_i}\ra,\la w_1^{d_j}\ra$ be two points  of $\cQ$ that lie on a secant line through $\la \omega^u\ra$.

Since the three points $\la w_1^{d_i}\ra,\la w_1^{d_j}\ra$ and $\la \omega^u\ra$ are collinear, the Gram matrix of $w_1^{d_i},w_1^{d_j},\omega^u$ is singular. By direct computations we find that the determinant of this Gram matrix is equal to $B(w_1^{d_i},w_1^{d_j})^2B(\omega^u,\omega^u)+2B(w_1^{d_i},\omega^u)B(w_1^{d_j},\omega^u)B(w_1^{d_i},w_1^{d_j}).$ It follows that
\begin{equation}\label{deteq}
B(w_1^{d_i},w_1^{d_j})^2B(\omega^u,\omega^u)+2B(w_1^{d_i},\omega^u)B(w_1^{d_j},\omega^u)B(w_1^{d_i},w_1^{d_j})=0
\end{equation}
Recall that $f(\omega^u)=\tr(\omega^{2u})$ has sign $-\epsilon=-1$, and $-1$ is a square since $q\equiv 1\pmod{4}$. We see from (\ref{deteq}) that $ 2\Tr(w_1^{d_i}\omega^u)\Tr(w_1^{d_j}\omega^u) \Tr(w_1^{d_i+d_j})$ has sign $-\epsilon=-1$. If either $d_i$ or $d_j$ is $d_0$, say $d_i$ is $d_0$, then $\Tr(2\beta \omega^uw_1^{d_0})\Tr(\omega^uw_1^{d_j})\Tr(w_1^{d_j+d_0})$ is a nonsquare since $\beta\in F^\ast$ has sign $\epsilon=1$. Hence exactly one of  $\Tr(2\beta \omega^uw_1^{d_0})$ and $\Tr(\omega^uw_1^{d_j})\Tr(w_1^{d_j+d_0})$ is a square and the other is a  nonsquare of $F$. Their corresponding summands in the right hand side of (\ref{eqn_Tu})  thus contribute $s+n=-1$ to $T_u$. On the other hand, if $d_i,d_j\not=d_0$, by Lemma~\ref{lem_sgn_prod}, $2\Tr(w_1^{d_i+d_j})\Tr(w_1^{d_i+d_0})\Tr(w_1^{d_j+d_0})$ has sign $\epsilon=1$, so $ \Tr(w_1^{d_i}\omega^u)\Tr(w_1^{d_j}\omega^u) \Tr(w_1^{d_i+d_0})\Tr(w_1^{d_j+d_0})$ has sign $-1$, i.e., it is a nonsquare of $F$. Again exactly one of $ \Tr(w_1^{d_i}\omega^u)\Tr(w_1^{d_i+d_0})$ and $\Tr(w_1^{d_j}\omega^u) \Tr(w_1^{d_j+d_0})$ is a square and the other is a nonsquare of $F$. Their corresponding summands in the right hand side of (\ref{eqn_Tu})  thus contribute $s+n=-1$ to $T_u$.

Summing up we have $|T_u|=-\frac{q+1}{2}$ in this case.
Note that replacing $u$ by $u+N$ does not  change the value of $T_u$  in this case.
Hence, $T_u=T_{u+N}=-\frac{q+1}{2}$.

{\bf Case 3: $L_u$ is a secant line.}  In this case, $\la \omega^u\ra$ is an exterior point, and thus $f(\omega^u)$ has sign $\epsilon(=1)$ by Lemma~\ref{lem_sgn}
and \ref{lem_sgn_prod}.
Note that $u$ satisfies $2u \;(\mod{N})\not \in T$ and  $2u \;(\mod{2N}) \in T'$ since $f(\omega^u)$ is a square. By (2) of Remark \ref{rem_Tu}, we may assume that it is on the tangent line $l$ of $\la w_1^{d_0}\ra$.
(If necessary, replace $u$ by $u+N$, and then the resulting $u$  still satisfies that $2u \;(\mod{N})\not \in T$ and  $2u \;(\mod{2N}) \in T'$.)
For $i\ne 0$, the tangent line through $\la w_1^{d_i}\ra$ intersects $l$ at the point $\la \omega^u-\lambda_i w_1^{d_0}\ra$ by straightforward calculations, where $\lambda_i=\Tr(\omega^uw_1^{d_i})\Tr(w_1^{d_i+d_0})^{-1}$.  When $i$ ranges from $1$ to $q$, we get all the points of $l$ other than $\la w_1^{d_0}\ra$, since each other tangent line intersects $l$ at a distinct point. This implies that  the values $\lambda_i$ are all distinct, and hence $\{\lambda_i:\,1\leq i\leq q\}$ is equal to $F$. Since $\Tr(\omega^uw_1^{d_i})\Tr(w_1^{d_i+d_0})$ has the same sign as $\lambda_i$, we see that the $q$ terms in the second sum in the right hand side of (\ref{eqn_Tu}) are $\frac {q-1}{2}$ once, $s$ with multiplicity $\frac {q-1}{2}$, and $n$ with multiplicity $\frac {q-1}{2}$. Also the first summand in the right hand side of (\ref{eqn_Tu}) is equal to $\frac{q-1}{2}$ since $\la \omega^u\ra$ is on the tangent line $l$ of $\la w_1^{d_0}\ra$.  Summing up, in this case, we have $|T_u|=2\cdot\frac{q-1}{2}+\frac{q-1}{2}(s+n)=\frac{q-1}{2}$. Note that the replacing of $u$ by $u+N$ does not change the value of $T_u$  in this case.  Hence, $T_u=T_{u+N}=\frac{q-1}{2}$.

This completes the proof of the theorem.\qed
\end{proof}
\begin{remark}\label{tuvalue}{\em
\begin{itemize}
\item[(i)]
Some parts of the above theorem can be made more precise. We have $T_u=\frac{q-1}{2}+qs$ if and only if
\[
u\,(\mod{2N}) \in
\left\{
\begin{array}{ll}
\overline{X}, & \mbox{ when  $\sgn(2)=1$,}\\
\overline{X}+N, & \mbox{ when $\sgn(2)=-1$}.
 \end{array}
\right.
\]
and
$T_u=\frac{q-1}{2}+qn$ if and only if
\[
u\,(\mod{2N}) \in
\left\{
\begin{array}{ll}
\overline{X}, & \mbox{ when $\sgn(2)=-1$,}\\
\overline{X}+N, & \mbox{ when $\sgn(2)=1$}.
 \end{array}
\right.
\]

To see this, by Eqn.~(\ref{eqn_Tu}), consider the set
\begin{align*}
&\, \{\sgn(\Tr(\omega^u\cdot c)):c\in X\}\\
=&\, \{\sgn(\Tr(\omega^uw_1^{d_i})\Tr(w_1^{d_0+d_i})):1\le i\le q\}
\cup \{\sgn(2\beta\Tr(\omega^uw_1^{d_0}))\}.
\end{align*}
For $\omega^u\in X$, by Lemma~\ref{lem_sgn_prod}, we have the
following observations:
\\
(1) If $\omega^u=w_1^{d_j}\Tr(w_1^{d_j+d_0})$, then
\[
\sgn(\Tr(\omega^uw_1^{d_i})\Tr(w_1^{d_0+d_i}))=
\sgn(\Tr(w_1^{d_j+d_i})\Tr(w_1^{d_i+d_0})\Tr(w_1^{d_j+d_0}))=\sgn(2)
\]
and
\[
\sgn(2\beta\Tr(\omega^uw_1^{d_0}))=\sgn(2\beta)\sgn(\Tr(w_1^{2d_0}))=0.
\]
(2) If $\omega^u=2\beta \Tr(w_1^{d_0})$, then
\[
\sgn(\Tr(\omega^uw_1^{d_i})\Tr(w_1^{d_0+d_i}))=
\sgn(2\beta\Tr(w_1^{d_i+d_0})^2)=\sgn(2)
\]
and
\[
\sgn(2\beta\Tr(\omega^uw_1^{d_0}))=\sgn(2\beta)\sgn(\Tr(w_1^{2d_0}))=0.
\]
Summing up, 
we have $T_u=\frac{q-1}{2}+qs$ (resp. $\frac{q-1}{2}+qn$) when $\sgn(2)=1$ (resp. $\sgn(2)=-1$).
Similarly, for $\omega^u\in \omega^N X$, it holds that
$T_u=\frac{q-1}{2}+qs$ (resp. $\frac{q-1}{2}+qn$) when $\sgn(2)=-1$ (resp. $\sgn(2)=1$). By Theorem~\ref{thm:main2}, the converse is also true.
\item[(ii)]
By (\ref{eq:Gaussquad}), the values of $s$ and $n$ are given by $s=\frac{-1+G(\eta')}{2}$ and $n=\frac{-1-G(\eta')}{2}$,
where $\eta'$ is the quadratic character of $F$.
Thus,
the condition of the remark above can be described as follows: $T_u=\frac{q-1}{2}+q\frac{-1+\eta'(2)G(\eta')}{2}$ (resp.
$T_u=\frac{q-1}{2}+q\frac{-1-\eta'(2)G(\eta')}{2}$) if and only if
$u\,(\mod{2N})\in \overline{X}$ (resp. $u\,(\mod{2N})\in \overline{X}+N$).
\end{itemize} }
\end{remark}
\section{A construction of Cameron-Liebler line classes
with parameter $x=\frac{q^2-1}{2}$}\label{proofPDS}
In this section, we give the promised construction of Cameron-Liebler line classes with parameter $x=\frac{q^2-1}{2}$. We will use the same notation introduced in previous sections. By Result~\ref{res_1}, it suffices to construct an $x$-tight set $\cM\in\cQ^+(5,q)$ satisfying the hyperplane intersection property specified in Result~\ref{res_1}. Going from a subset of points in $\PG(5,q)$ to a subset of vectors in $E\times E$, by Result~\ref{res_charD}, it suffices to construct a subset $D\subset E\times E \setminus\{(0,0)\}$ such that $|D|=x(q^3-1)$, $Q((u,v))=0$ for all $(u,v)\in D$, and the additive character values of $D$ satisfy \eqref{eqn:polarD}.

The first step of our construction follows the idea in \cite[p.~37]{Rthesis}. That is, we prescribe an automorphism group for the $x$-tight set that we intend to construct. We will take the model of $\cQ^{+}(5,q)$ as introduced in Section~\ref{sec:pre}.  Define the map $g$ on $\cQ^+(5,q)$ by
$$g: (x,y)\mapsto (w_1 x, w_1^{-1} y),$$
where $w_1\in E^*$ has order $N=q^2+q+1$. Then the cyclic subgroup $C\leq PGO^+(6,q)$ generated by $g$ has order $q^2+q+1$, and it acts semi-regularly on the points of $\cQ^+(5,q)$ (so each orbit contains $q^2+q+1$ points).  The $x$-tight set $\cM$ will be a union of orbits of $C$ acting on $\cQ^+(5,q)$. The main difficulty with this approach lies in coming up with a general choice of orbits of $C$ for all $q$ so that the union of the chosen orbits is an $x$-tight set. We will use the subset $X$ introduced in Section 3 to help choose orbits for our purpose.

Let ${\overline X}$ be the set as defined in (\ref{eqn_defX}).  The set ${\overline X}$ can be expressed as
\[
\overline{X}=2A\cup (2B+N)\,(\mod{2N})
\]
for some  $A,B \subseteq \Z_N$ with $|A|+|B|=q+1$. Define a subset $I_X$ of $\Z_{4N}$ by
\begin{equation}\label{eqn_defIX}
I_X:=\{4t+Ns \,(\mod{4N}): t \in A,s=0,1\}
\cup \{4t+Ns \,(\mod{4N}): t \in B,s=2,3\}.
\end{equation}
That is, $$I_X=4A \cup (4A+N)\cup (4B+2N)\cup (4B+3N).$$ Note that $|I_X|=2(q+1).$
Now we use $I_X$ to give the main construction of this paper. Let $q$ be a prime power such that $q\equiv 5$ or $9\pmod{12}$. Define
\begin{equation}\label{def:D}
D:=\{(xy,xy^{-1}z\omega^{\ell})\,|\,x\in F^\ast,y\in C_0^{(q-1,q^3)},z\in C_0^{(4N,q^3)},\ell \in I_X\}\subseteq
E\times E,
\end{equation}
where $C_0^{(q-1,q^3)}:=\langle \omega^{q-1}\rangle$, $C_0^{(4N,q^3)}:=\langle \omega^{4N}\rangle$. Clearly we have $$|D|=(q-1)(q^2+q+1)\frac{q-1}{4}\cdot 2(q+1)=\frac{q^2-1}{2}(q^3-1),$$
and $\lambda D=D$ for all $\lambda\in F^*$. Also it is clear that $D$ is a subset of the hyperbolic quadric $\{(x,y)\in E^2\,|\,\Tr(xy)=0\}$ since $\Tr(\omega^{\ell})=0$ for any $\ell\in I_X$ by the definition of $I_X$ and $X$. Let $\cM$ be the set of projective points in $\PG(5,q)$ corresponding to $D$. Then $|\cM|=\frac{q^2-1}{2}(q^2+q+1)$ and $\cM\subset \cQ^+(5,q)$.

\begin{theorem}\label{thm:main} The line set ${\mathcal L}$ in $\PG(3,q)$ corresponding to $\cM$ under the Klein correspondence forms a Cameron-Liebler line class with parameter $x=\frac{q^2-1}{2}$.
\end{theorem}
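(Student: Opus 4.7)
My plan is to verify the character-sum criterion in Result~\ref{res_charD}. The cardinality $|D|=\frac{q^2-1}{2}(q^3-1)$ is immediate from the definition, so the substantive task is to show, for every nonzero $(a,b)\in E\times E$,
\[
\psi_{a,b}(D)=-\tfrac{q^2-1}{2}+q^3\cdot[[\,(b,a)\in D\,]].
\]
I would first dispose of the boundary cases $a=0$ or $b=0$: since $D$ contains only pairs of nonzero vectors, $(b,a)\notin D$, and using the identity $F^{*}\cdot C_0^{(q-1,q^3)}=E^{*}$ (which follows from $\gcd(q-1,N)=1$) a direct computation gives $\psi_{a,b}(D)=-(q^2-1)/2$.

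For the main case $a,b\in E^{*}$, I would rewrite
\[
D=F^{*}\cdot\bigsqcup_{m\in\widetilde{I}_X}\{(y,y^{-1}\omega^m):y\in C_0^{(q-1,q^3)}\},
\]
where $\widetilde{I}_X:=I_X+4N\mathbb{Z}\subset\mathbb{Z}_{q^3-1}$ has size $(q^2-1)/2$. Performing the $x\in F^{*}$ summation via $\sum_{x\in F^{*}}\psi_F(xt)=q\,[[t=0]]-1$ reduces the computation to counting zeros:
\[
\psi_{a,b}(D)=q\cdot\#\{(y,m):\Tr(ay+by^{-1}\omega^m)=0\}-\tfrac{N(q^2-1)}{2}.
\]
Next I would expand $[[\Tr(\cdot)=0]]=q^{-1}\sum_{\alpha\in F}\psi_F(\alpha\Tr(\cdot))$ and factor the resulting sum over $(y,m)$ for $\alpha\neq 0$; the inner sum over $m\in\widetilde{I}_X$ becomes $\sum_{r\in I_X}\sum_{z\in F^{*4}}\psi_F(z\Tr(c\omega^r))$ with $c:=\alpha by^{-1}$. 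Invoking the structural identity $I_X=2\overline{X}\sqcup(2\overline{X}+N)$ in $\mathbb{Z}_{4N}$ -- immediate from how $I_X$ is assembled from the parts $A,B$ of $\overline{X}$ -- this rearranges to $\sum_{x\in\omega^{\overline{X}}}\psi_F(\Tr(cx^2)\cdot K)$ with $K=F^{*4}\cup\omega^N F^{*4}\subset F^{*}$; the resulting quartic Gauss periods can then be reduced to quadratic ones using the Hasse--Davenport product formula (Theorem~\ref{thm:Stickel2}).

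After the outer summations over $y\in C_0^{(q-1,q^3)}$ and $\alpha\in F^{*}$, the expression should collapse into a linear combination of the exponential sums $T_u$ from Section~\ref{sec:setX}, indexed by a single $u=u(a,b)$. Applying Theorem~\ref{thm:main2} to evaluate $T_u$ in its three cases (tangent, passant, secant with respect to the conic $\cQ$) and simplifying, only two distinct values of $\psi_{a,b}(D)$ should remain, matching the required $\{-(q^2-1)/2,\,-(q^2-1)/2+q^3\}$ dichotomy. The main obstacle will be showing that this dichotomy matches $(b,a)\in D$ precisely: the \emph{tangent} case of Theorem~\ref{thm:main2}, producing the large value of $T_u$, must correspond exactly to $(b,a)\in D$. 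This requires tracking how $\widetilde{I}_X$ sits inside the projective geometry of the conic $\cQ$ and verifying that the sign refinements in Remark~\ref{tuvalue} (governed by $\sgn(2)$) cancel so as to yield a single clean membership criterion independent of $\sgn(2)$.
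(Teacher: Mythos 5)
Your overall route is the one the paper takes: reduce to the character-sum criterion of Result~\ref{res_charD}, dispose of $a=0$ or $b=0$ directly, and for $a,b\neq 0$ use orthogonality of characters and the Hasse--Davenport product formula to collapse the quadruple sum onto the exponential sums $T_u$ of Theorem~\ref{thm:main2}. The boundary case and the structural identity $I_X=2\overline X\cup(2\overline X+N)$ are correct. But there is a genuine gap at the step you yourself flag as the main obstacle, and as stated your plan would fail there. You assert that the large value of $\psi_{a,b}(D)$ should occur exactly when $T_{u(a,b)}$ falls in the ``tangent'' case of Theorem~\ref{thm:main2}. That cannot be right on numerical grounds: the tangent condition depends only on the product $r=ab$ (it is the condition $\log_\omega(ab)\pmod N\in T$), and it holds for $(q^2-1)(q^3-1)$ pairs $(a,b)$ --- twice $|D|$. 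What the paper actually proves (Lemmas~\ref{lemma:c0} and \ref{lemma:c1,3}) is that
$\tfrac{4(q^3-1)}{q-1}S_{a,b}=2(q+1)(1-q^3)+2q^3\delta_{r}+\eta(2)n_{r'}G(\eta)\bigl(2NT_{\log_\omega(r^{(N+1)/2})}+N-q\delta_r'\bigr)$,
where $n_{r'}=\pm 2$ is built from the \emph{quartic} character of $r'=a^{-1}b$. The large value arises only when the tangent subcase of $T_u$ ($\overline X$ versus $\overline X+N$) is correctly paired with the sign of $n_{r'}$; for half of the tangent-case pairs the two mismatch and the contribution cancels down to the small value. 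This quartic datum in $a^{-1}b$ is invisible to $T_u$, which sees only $ab$; it originates from the factor $z\in C_0^{(4N,q^3)}$ in the definition of $D$, and in your own intermediate expression it is hiding in the fact that your set $K=F^{*4}\cup\omega^NF^{*4}$ is \emph{not} the set $S$ of nonzero squares over which the $T_u$ are defined (that would be $F^{*4}\cup\omega^{2N}F^{*4}$). Splitting $K$ against $S$ via the quartic Gauss sums is precisely what produces the factor $n_{r'}$, and without carrying it you cannot even state the membership criterion, much less verify it.

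Two smaller omissions. First, the ``$\sgn(2)$ cancellation'' you anticipate is real but minor: it is the $\eta(2)$ prefactor in Lemma~\ref{lemma:c1,3} cancelling against the $\eta'(2)$ appearing in Remark~\ref{tuvalue}; it is not the mechanism that selects membership in $D$. Second, your plan only discusses the implication ``$(b,a)\in D$ $\Rightarrow$ large value''; the converse also needs an argument. The paper gets it from a counting fact --- the dual of $D$ has the same cardinality as $D$ (\cite[Theorem 3.4]{M94}) --- so once every element of $D$ is shown to produce the large value, no other pair can.
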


In the rest of this section we give a proof of Theorem~\ref{thm:main}. By the discussions at the beginning of this section, it suffices to show that for all $(0,0)\neq (a,b)\in E\times E$,
\begin{align*}
\psi_{a,b}(D)
=\begin{cases}-\frac{q^2-1}{2}+q^3,\quad &\textup{if $(b,a)\in D$},\\
-\frac{q^2-1}{2},\quad & \textup{otherwise}.\end{cases}
\end{align*}
By (\ref{fieldchara}) and the definition of $D$, $\psi_{a,b}(D)$ is
expressed as
\begin{equation}\label{eigen1}
S_{a,b}:=\sum_{\ell\in I_X}\sum_{i=0}^{N-1}\sum_{j=0}^{\frac{q-1}{4}-1}\sum_{k=0}^{q-2}\psi_E(a\omega^{Nk}\omega^{(q-1)i}+b\omega^{Nk}
\omega^{-(q-1)i}\omega^{4Nj+\ell}).
\end{equation}
We evaluate these character sums by considering two cases: (i) $a=0$ or
$b=0$; and (ii) $a\not=0$ and $b\not=0$.
\begin{lemma}\label{lemma:ab0}
Assume either $a=0$ or $b=0$. Then the quadruple sum $S_{a,b}$ in $(\ref{eigen1})$ is equal to $-\frac{q^2-1}{2}$.
\end{lemma}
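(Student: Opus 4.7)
The plan is to exploit the coprimality $\gcd(q-1, N) = 1$, which holds since $q \equiv 5$ or $9 \pmod{12}$, to collapse the inner double sum over $i$ and $k$ to a character sum over all of $E^\ast$. Because $(q-1) \cdot N = q^3 - 1$ and $\gcd(q-1,N) = 1$, the Chinese Remainder Theorem tells us that the map $(i,k) \mapsto Nk + (q-1)i \pmod{q^3-1}$, with $0 \le i \le N-1$ and $0 \le k \le q-2$, is a bijection onto $\{0,1,\ldots,q^3-2\}$. The same is true for the map $(i,k) \mapsto Nk - (q-1)i \pmod{q^3-1}$, since $\gcd(-(q-1),N)=1$ as well.

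For the subcase $b = 0$, the exponent inside $\psi_E$ reduces to $a\omega^{Nk+(q-1)i}$, so the inner double sum equals
\[
\sum_{i=0}^{N-1}\sum_{k=0}^{q-2} \psi_E\bigl(a\omega^{Nk+(q-1)i}\bigr) = \sum_{t=0}^{q^3-2} \psi_E(a\omega^t) = \psi_E(a E^\ast) = -1,
\]
using $a \neq 0$. The remaining sums over $j$ and $\ell$ contribute the factor $\frac{q-1}{4}\cdot |I_X| = \frac{q-1}{4}\cdot 2(q+1) = \frac{q^2-1}{2}$, so $S_{a,0} = -\frac{q^2-1}{2}$.

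For the subcase $a = 0$, fix $\ell$ and $j$, and write the exponent as $b\omega^{4Nj+\ell}\cdot \omega^{Nk-(q-1)i}$. Since $b\omega^{4Nj+\ell} \neq 0$, the inner double sum over $(i,k)$ is again a full sum of $\psi_E$ over $E^\ast$ (by the second CRT bijection above), giving $-1$ independently of $j$ and $\ell$. Multiplying by $\frac{q-1}{4}\cdot |I_X| = \frac{q^2-1}{2}$ yields $S_{0,b} = -\frac{q^2-1}{2}$.

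There is essentially no obstacle: the lemma is a warm-up trivial case, and the only conceptual point is that the stabilizer structure ($\lambda D = D$ for $\lambda \in F^\ast$, together with the $C = \langle g\rangle$-orbit structure) fills up $E^\ast$ completely in the coordinate orthogonal to the nonzero one. The heavier work will come in the complementary case $a,b \neq 0$, where the Eisenstein sum theorem (Theorem \ref{thm:Yama}), the evaluation of $T_u$ (Theorem \ref{thm:main2}), and the Hasse–Davenport formula must be brought to bear.
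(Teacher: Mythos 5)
Your proof is correct and follows essentially the same route as the paper: both arguments observe that $\gcd(q-1,N)=1$ makes $\{\omega^{Nk+(q-1)i}\}$ (and likewise $\{\omega^{Nk-(q-1)i}\}$) run over all of $E^\ast$, so the inner double sum is $\psi_E(aE^\ast)=-1$ (resp.\ $\psi_E(b\omega^{4Nj+\ell}E^\ast)=-1$), and the outer sums contribute the factor $\tfrac{q-1}{4}|I_X|=\tfrac{q^2-1}{2}$. No discrepancies to report.
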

\proof
If  $a\not=0$ and  $b=0$,  since $\{\omega^{Nk+(q-1)i}\,|\,0\le k\le q-2,0\le i\le N-1\}=E^\ast$, we have
\begin{align*}
S_{a,b}= &\,\sum_{\ell\in I_X}\sum_{i=0}^{N-1}\sum_{j=0}^{\frac{q-1}{4}-1}\sum_{k=0}^{q-2}\psi_E(a\omega^{Nk}\omega^{(q-1)i})\\
=&\,\frac{q-1}{4}|I_X|\sum_{x\in E^\ast}\psi_E(ax)=-\frac{1}{4}(q-1)|I_X|=-\frac{q^2-1}{2}.
\end{align*}
Similarly, if $a=0$ and $b\not=0$, we have
\begin{align*}
S_{a,b}=&\,\sum_{\ell\in I_X}\sum_{i=0}^{N-1}\sum_{j=0}^{\frac{q-1}{4}-1}\sum_{k=0}^{q-2}\psi_E(b\omega^{Nk}\omega^{-(q-1)i}\omega^{4Nj+\ell})\\
=&\,\sum_{\ell\in I_X}\sum_{j=0}^{\frac{q-1}{4}-1}\sum_{x\in E^\ast}\psi_E(bx\omega^{4Nj+\ell})=-\frac{1}{4}(q-1)|I_X|=-\frac{q^2-1}{2}.
\end{align*}
This completes the proof of the lemma.
\qed
\vspace{0.3cm}

Next, we consider Case (ii): $a\neq 0$ and $b\not=0$. The quadruple sum $S_{a,b}$ in this case can be essentially reduced to the character sum  $T_u$, which was already evaluated in
Theorem~\ref{thm:main2} (and Remark~\ref{tuvalue}). The computations involved in reducing $S_{a,b}$ to $T_u$ are routine but complicated.
So we start with a lemma, which says that the sum $S_{a,b}$ can be expressed in terms of Gauss sums of order $4N$ of $E$.
\begin{lemma}\label{lem:abnot0}
Assume that $a,b\not=0$. Then
\begin{equation}
S_{a,b}=
\frac{(q-1)}{4(q^3-1)}\sum_{\ell\in I_X}\sum_{\substack{u,v=0\\u+v\equiv 0(\mod{4})\\u\equiv v(\mod{N})}}^{4N-1}G(\chi_{4N}^{-v})G(\chi_{4N}^{-u})
\chi_{4N}^{v}(a)\chi_{4N}^u(b)
\chi_{4N}^u(\omega^\ell),\label{eigen34}
\end{equation}
where $\chi_{4N}$ is a multiplicative character of order $4N$ of $E$.
\end{lemma}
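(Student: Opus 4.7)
The plan is to expand each of the two cyclotomic-class indicators in the definition of $D$ as a character sum over $E^\ast$, carry out the inner Gauss-sum evaluations in $y$ and $z$, and then exploit orthogonality on $F^\ast$ to reduce to a congruence condition on the character indices.  Let $\chi_{q-1}$ be a multiplicative character of $E^\ast$ of order $q-1$ and let $\chi_{4N}$ be the character of order $4N$ fixed in the statement.  By the orthogonality relations,
\[
\mathbf{1}_{y\in C_0^{(q-1,q^3)}}=\frac{1}{q-1}\sum_{v=0}^{q-2}\chi_{q-1}^v(y),\qquad \mathbf{1}_{z\in C_0^{(4N,q^3)}}=\frac{1}{4N}\sum_{u=0}^{4N-1}\chi_{4N}^u(z).
\]
Substituting these into \eqref{eigen1} and interchanging orders of summation, the sum over $z\in E^\ast$ collapses to $G(\chi_{4N}^u)\chi_{4N}^{-u}(bxy^{-1}\omega^\ell)$, and the subsequent sum over $y\in E^\ast$ to $G(\chi_{q-1}^v\chi_{4N}^u)\chi_{q-1}^{-v}\chi_{4N}^{-u}(ax)$.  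Pulling the $a,b,\omega^\ell$-dependence outside the remaining sum over $x\in F^\ast$ leaves the inner character sum $\sum_{x\in F^\ast}\chi_{q-1}^{-v}(x)\chi_{4N}^{-2u}(x)$.

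\medskip

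The crux is evaluating this inner sum over $F^\ast$.  Because $\gcd(N,q-1)=1$, the restriction $\chi_{q-1}|_{F^\ast}$ is a primitive character of $F^\ast$; and because $\chi_{4N}(\omega^N)$ is a primitive $4$th root of unity, the restriction $\chi_{4N}^2|_{F^\ast}$ is the quadratic character $\eta'$ of $F$.  Orthogonality on $F^\ast$ therefore selects exactly the pairs $(v,u)$ for which $\chi_{q-1}^{-v}\chi_{4N}^{-2u}|_{F^\ast}$ is trivial, giving two families: $u$ even with $v=0$, or $u$ odd with $v=(q-1)/2$; each surviving pair contributes a factor of $q-1$.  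To merge these cases I would use the identity $\chi_{q-1}^{(q-1)/2}=\chi_{4N}^{2N}$, since both sides are the unique quadratic character of the cyclic group $E^\ast$.  This identity converts $G(\chi_{q-1}^{(q-1)/2}\chi_{4N}^u)$ into $G(\chi_{4N}^{u+2N})$ and $\chi_{q-1}^{-(q-1)/2}(a)\chi_{4N}^{-u}(a)$ into $\chi_{4N}^{-(u+2N)}(a)$, so after the reduction $S_{a,b}$ becomes
\[
\frac{1}{4N}\sum_{\ell\in I_X}\Bigl[\sum_{u\text{ even}}G(\chi_{4N}^u)^2\chi_{4N}^{-u}(ab\omega^\ell)+\sum_{u\text{ odd}}G(\chi_{4N}^u)G(\chi_{4N}^{u+2N})\chi_{4N}^{-(u+2N)}(a)\chi_{4N}^{-u}(b\omega^\ell)\Bigr].
\]

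\medskip

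Finally, introducing a second index by setting $v:=u$ in the even case and $v:=u+2N$ in the odd case bundles the two surviving contributions into a single double sum over $(u,v)\in\{0,\ldots,4N-1\}^2$.  I would verify that, using $N\equiv -1\pmod 4$ (which follows from $q\equiv 5,9\pmod{12}$), the admissible set is precisely $\{(u,v):u\equiv v\pmod N,\ u+v\equiv 0\pmod 4\}$: the congruence $u\equiv v\pmod N$ restricts $v\in\{u,u+N,u+2N,u+3N\}$, and then $u+v\equiv 0\pmod 4$ excludes $v=u+N$ and $v=u+3N$ and forces $v=u$ (resp.\ $v=u+2N$) exactly when $u$ is even (resp.\ odd).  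A change of variables $(u,v)\mapsto(-u,-v)\pmod{4N}$ (which preserves both congruences), together with the identity $-u+2N\equiv-(u+2N)\pmod{4N}$, symmetrises the Gauss factors to $G(\chi_{4N}^{-v})G(\chi_{4N}^{-u})$ and the character values to $\chi_{4N}^v(a)\chi_{4N}^u(b)\chi_{4N}^u(\omega^\ell)$, while the prefactor becomes $1/(4N)=(q-1)/(4(q^3-1))$, matching \eqref{eigen34}.  The main obstacle will be the last bookkeeping step: aligning the quadratic character across $E^\ast$ and $F^\ast$ via $\chi_{q-1}^{(q-1)/2}=\chi_{4N}^{2N}$ and then repackaging the parity dichotomy in $u$ into the symmetric congruence conditions stated in the lemma.
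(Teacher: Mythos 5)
Your proposal is correct, and it arrives at \eqref{eigen34} by a Fourier-dual organization of the same underlying Gauss-sum computation. The paper expands the two additive character factors $\psi_E(ax\omega^{(q-1)i})$ and $\psi_E(bx\omega^{-(q-1)i}y^4\omega^\ell)$ via $\psi_E(c)=\frac{1}{q^3-1}\sum_j G(\chi_{q^3-1}^{-j})\chi_{q^3-1}^j(c)$, producing a double sum over all $(j,k)\in\Z_{q^3-1}^2$, and then applies orthogonality three times (over $y\in F^\ast$ to force $\frac{q-1}{4}\mid k$, over $x\in F^\ast$ to force $j\equiv 0\pmod{\frac{q-1}{4}}$ and $j+\frac{q-1}{4}u\equiv 0\pmod{q-1}$, and over $i$ to force $u\equiv v\pmod{N}$), landing on the symmetric double sum directly and deferring the explicit parametrization of the admissible pairs $(u,v)$ to Remark~\ref{re:transeq}. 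You instead expand the two multiplicative subgroup indicators, evaluate the resulting complete twisted sums over $E^\ast$ as Gauss sums, and use a single application of orthogonality on $F^\ast$; this isolates the parity dichotomy ($u$ even with $v=u$, $u$ odd with $v=u+2N$) up front and then re-symmetrizes via $(u,v)\mapsto(-u,-v)$ together with $N\equiv -1\pmod{4}$. Both routes are sound and I verified your congruence bookkeeping (the exclusion of $v=u+N,u+3N$, the identity $\chi_{q-1}^{(q-1)/2}=\chi_{4N}^{2N}$, and the prefactor $\frac{1}{4N}=\frac{q-1}{4(q^3-1)}$); yours works with smaller character sums and makes the structure of the surviving terms transparent at this stage, at the cost of the extra merging step. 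The only detail worth spelling out is the principal-character case $u=v=0$ in your Gauss-sum collapse, which is covered by the convention $G(\chi_0)=-1$ from Section~\ref{sec:pre} together with the hypothesis $a,b\neq 0$ (which guarantees the arguments $bxy^{-1}\omega^\ell$ and $ax$ are nonzero, exactly where the assumption of the lemma enters).
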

\proof
By orthogonality of characters,  the quadruple sum $S_{a,b}$ is equal to
{\footnotesize
\begin{align}
&\,\frac{1}{4}\sum_{\ell\in I_X}\sum_{i=0}^{N-1}\sum_{x,y\in F^\ast}\psi_E(ax\omega^{(q-1)i}+bx\omega^{-(q-1)i}y^4\omega^\ell)\nonumber\\
=&\,\frac{1}{4(q^3-1)^2}\sum_{\ell\in I_X}\sum_{i=0}^{N-1}\sum_{x,y\in F^\ast}\sum_{j,k=0}^{q^3-2}
G(\chi_{q^3-1}^{-j})\chi_{q^3-1}^j(ax\omega^{(q-1)i})
G(\chi_{q^3-1}^{-k})\chi_{q^3-1}^k(bx\omega^{-(q-1)i}y^4\omega^\ell)\nonumber\\
=&\,\frac{1}{4(q^3-1)^2}\sum_{\ell\in I_X}\sum_{i=0}^{N-1}\sum_{x\in F^\ast}\sum_{j,k=0}^{q^3-2}G(\chi_{q^3-1}^{-j})G(\chi_{q^3-1}^{-k})
\chi_{q^3-1}^j(a)\chi_{q^3-1}^k(b)
\chi_{q^3-1}^{j+k}(x)\chi_{q^3-1}^{j-k}(\omega^{(q-1)i})
\chi_{q^3-1}^k(\omega^\ell)\nonumber\\
&\,\hspace{11cm}\times \Big(\sum_{y\in F^\ast}\chi_{q^3-1}^k(y^4)\Big).
\label{eigen2}
\end{align}
}
Let $\chi_{4N}=\chi_{q^3-1}^\frac{q-1}{4}$. Since
$\sum_{y\in F^\ast}\chi_{q^3-1}^k(y^4)=q-1$ or $0$ according to
$\frac{(q-1)}{4}\,|\,k$ or not, continuing from (\ref{eigen2}), we have
{\footnotesize
\begin{align}
S_{a,b}=&\,\frac{q-1}{4(q^3-1)^2}\sum_{\ell\in I_X}\sum_{i=0}^{N-1}\sum_{x\in F^\ast}\sum_{j=0}^{q^3-2}\sum_{u=0}^{4N-1}G(\chi_{q^3-1}^{-j})G(\chi_{4N}^{-u})
\chi_{q^3-1}^j(a)\chi_{4N}^u(b)
\chi_{q^3-1}^{j+\frac{q-1}{4}u}(x)\chi_{q^3-1}^{j-\frac{q-1}{4}u}(\omega^{(q-1)i})
\chi_{4N}^u(\omega^\ell)\nonumber\\
=&\,\frac{q-1}{4(q^3-1)^2}\sum_{\ell\in I_X}\sum_{i=0}^{N-1}\sum_{j=0}^{q^3-2}\sum_{u=0}^{4N-1}G(\chi_{q^3-1}^{-j})G(\chi_{4N}^{-u})
\chi_{q^3-1}^j(a)\chi_{4N}^u(b)
\chi_{q^3-1}^{j-\frac{q-1}{4}u}(\omega^{(q-1)i})
\chi_{4N}^u(\omega^\ell)\nonumber\\
&\,\hspace{11cm}\times\Big(\sum_{x\in F^\ast}\chi_{q^3-1}^{j+\frac{q-1}{4}u}(x)\Big). \label{eigen3}
\end{align}
}
Let $\chi_{N}=\chi_{q^3-1}^{q-1}$. Since
$\sum_{x\in F^\ast}\chi_{q^3-1}^{j+\frac{q-1}{4}u}(x)=q-1$ or $0$ according to
$j\equiv 0\,(\mod{\frac{q-1}{4}})$ and $j+\frac{q-1}{4}u\equiv 0\,(\mod{q-1})$ or not, continuing from (\ref{eigen3}), we have
\begin{align}
S_{a,b}=&\,\frac{(q-1)^2}{4(q^3-1)^2}\sum_{\ell\in I_X}\sum_{i=0}^{N-1}\sum_{\substack{u,v=0\\ u+v\equiv 0(\mod{4})}}
^{4N-1}
G(\chi_{4N}^{-v})G(\chi_{4N}^{-u})
\chi_{4N}^{v}(a)\chi_{4N}^u(b)
\chi_{4N}^{v-u}(\omega^{(q-1)i})
\chi_{4N}^u(\omega^\ell)\nonumber\\
=&\,\frac{(q-1)^2}{4(q^3-1)^2}\sum_{\ell\in I_X}\sum_{\substack{u,v=0\\u+v\equiv 0(\mod{4})}}^{4N-1}G(\chi_{4N}^{-v})G(\chi_{4N}^{-u})
\chi_{4N}^{v}(a)\chi_{4N}^u(b)
\chi_{4N}^u(\omega^\ell)\Big(\sum_{i=0}^{N-1}\chi_{N}^{(v-u)\frac{q-1}{4}}(\omega^i)\Big)\nonumber\\
=&\,\frac{(q-1)}{4(q^3-1)}\sum_{\ell\in I_X}\sum_{\substack{u,v=0\\u+v\equiv 0(\mod{4})\\u\equiv v(\mod{N})}}^{4N-1}G(\chi_{4N}^{-v})G(\chi_{4N}^{-u})
\chi_{4N}^{v}(a)\chi_{4N}^u(b)
\chi_{4N}^u(\omega^\ell).
\end{align}
The proof of the lemma is now complete. \qed
\vspace{0.3cm}

\begin{remark}\label{re:transeq}{\em
In (\ref{eigen34}) of Lemma~\ref{lem:abnot0},  $v$ is expressed as
$v=-Nc+4d$
if $u=Nc+4d$ for $c=0,1,2,3,$ and $d=0,1,\ldots,N-1$.
Write $\chi_4=\chi_{4N}^N$ and we can assume that $\chi_4(\omega^N)=i$, where
$i=\sqrt{-1}$.
Then, by the definition of $I_X$,  the sum in (\ref{eigen34}) can be expanded as follows:
\begin{align}
\frac{4(q^3-1)}{q-1}S_{a,b}
=&\,\sum_{s=0,1}\sum_{t\in A}\sum_{c=0,1,2,3}\sum_{d=0}^{N-1}G(\chi_{4N}^{Nc-4d})G(\chi_{4N}^{-Nc-4d})
\chi_4^c(a^{-1}b)\chi_N^d(ab)
\chi_{4N}^{Nc+4d}(\omega^{4t+Ns})\nonumber\\
&\,\hspace{0.3cm}+\sum_{s=2,3}\sum_{t\in B}\sum_{c=0,1,2,3}\sum_{d=0}^{N-1}G(\chi_{4N}^{Nc-4d})
G(\chi_{4N}^{-Nc-4d})
\chi_4^c(a^{-1}b)\chi_N^d(ab)
\chi_{4N}^{Nc+4d}(\omega^{4t+Ns})\nonumber\\
=&\,\sum_{t\in A}\sum_{c=0,1,2,3}\sum_{d=0}^{N-1}G(\chi_{4N}^{Nc-4d})
G(\chi_{4N}^{-Nc-4d})
\chi_4^c(r')\chi_N^d(r)
\chi_{N}^{d}(\omega^{4t})(1+(i)^c)\nonumber\\
&\,\hspace{0.3cm} +\sum_{t\in B}\sum_{c=0,1,2,3}
\sum_{d=0}^{N-1}G(\chi_{4N}^{Nc-4d})G(\chi_{4N}^{-Nc-4d})
\chi_4^c(r')\chi_N^d(r)
\chi_{N}^{d}(\omega^{4t})((-1)^c+(-i)^c)
 \label{eigen4}
\end{align}
where $r':=a^{-1}b$ and $r:=ab$. }
\end{remark}
We now compute the right hand side of Eqn.~(\ref{eigen4}) by dividing it into three partial sums: $P_1$, $P_2$ and $P_3$, where $P_1$ is the contribution of the summands with $c=2$, $P_2$ is the contribution of the summands with $c=0$, and $P_3$ is the contribution of the summands with $c=1,3$. That is, we have
$$\frac{4(q^3-1)}{q-1}S_{a,b}=P_1+P_2+P_3.$$
It is obvious that $P_1$ is equal to $0$. Next we evaluate $P_2$.
\begin{lemma}\label{lemma:c0}
We have
\[P_2=2(q+1)(1-q^3)+2q^3 \delta_{r},
\]
where
\[
\delta_{r}:=\left\{
\begin{array}{ll}
N,& \mbox{ if $\log_\omega(r) \;(\mod{N})\in T$,} \\
0,& \mbox{ if $\log_\omega(r) \; (\mod{N})\not\in T$,}
 \end{array}
\right.
\]
and $T$ is as defined in Theorem~\ref{thm:main2}.
\end{lemma}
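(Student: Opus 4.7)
The plan is to evaluate the $c=0$ contribution by exploiting that $\chi_N^d$ is trivial on $F^\ast$ (since $\gcd(N,q-1)=1$) and by identifying the combinatorial structure of $A\cup B$ with the conic via the set $X$. Specializing Eqn.~(\ref{eigen4}) to $c=0$, both the $A$- and $B$-contributions carry the factor $1+1=2$ and $\chi_{4N}^{-4d}=\chi_N^{-d}$, so (using that $A$ and $B$ are disjoint as subsets of $\Z_N$, which follows from $|\overline X|=q+1$)
\[
P_2 = 2\sum_{t\in A\cup B}\sum_{d=0}^{N-1} G(\chi_N^{-d})^2\,\chi_N^d(r\omega^{4t}),\qquad r=ab.
\]
The $d=0$ term contributes $2(q+1)$. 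For $d\not\equiv 0\pmod N$, Theorem~\ref{thm:Yama} gives $G(\chi_N^d) = q\sum_{j\in \widetilde T}\chi_N^d(w_1^j)$, where $\widetilde T=\{j\pmod N:\Tr(w_1^j)=0\}$. Squaring and applying the orthogonality $\sum_{d=0}^{N-1}\chi_N^d(\cdot)=N\cdot \mathbf 1_{F^\ast}(\cdot)$ converts the $d\neq 0$ contribution into
\[
\sum_{d=1}^{N-1} G(\chi_N^{-d})^2\,\chi_N^d(r\omega^{4t}) = q^2 N\cdot \#\{(j_1,j_2)\in \widetilde T^{\,2}:\ w_1^{j_1+j_2}\in r\omega^{4t}F^\ast\} - q^2(q+1)^2.
\]

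The key geometric step is to identify $4(A\cup B)\pmod N$ with the set $T$ of Theorem~\ref{thm:main2}. From $\overline X=2A\cup(2B+N)\pmod{2N}$ one obtains $A\cup B\equiv 2^{-1}\{\log_\omega(x)\pmod N:\ x\in X\}$, hence $4(A\cup B)\equiv\{\log_\omega(x^2)\pmod N:\ x\in X\}$. From the explicit definition of $X$, each $x\in X$ satisfies $x^2\in F^\ast\cdot w_1^{2d_i}$ for some $0\le i\le q$, and the $q+1$ elements $\{w_1^{2d_i}\}$ form $F^\ast$-coset representatives for the $q+1$ cosets in the trace-zero hyperplane $H^\ast$; therefore $4(A\cup B)\equiv T\pmod N$. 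Substituting $i_\ell=(q-1)j_\ell$ (a bijection of $\Z_N$) then collapses the remaining count to
\[
\mathcal C \ := \ \#\{(i_1,i_2,i_3)\in T^3:\ i_1+i_2-i_3\equiv k\pmod N\},\qquad k=\log_\omega(r)\pmod N.
\]

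Finally I would compute $\mathcal C$ by Fourier analysis on $\Z_N$. The set $T$ is the image of the projective line $L_0\subset\PG(2,q)$ under the Singer log map, hence a classical $(q^2+q+1,q+1,1)$ planar difference set, and $|\widehat{\mathbf 1_T}(\chi)|^2=q$ for every nontrivial character $\chi$ of $\Z_N$. Writing $\mathcal C$ as the convolution $(\mathbf 1_T*\mathbf 1_T*\mathbf 1_{-T})(k)$ and inverting the Fourier transform gives $\mathcal C = (q+1) + q\,\mathbf 1_T(k) = (q+1) + q\delta_r/N$. Substituting back and using $N-(q+1)^2=-q$ produces $P_2 = 2(q+1) + 2q^2 N\,\mathcal C - 2q^2(q+1)^3 = 2(q+1)(1-q^3) + 2q^3\delta_r$, as required. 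The main obstacle is the middle identification $4(A\cup B)\equiv T\pmod N$: it demands carefully unwinding how the algebraic set $X$---built from traces on the conic and the sign function---encodes the line $L_0$ after squaring; once this geometric/algebraic correspondence is in hand, the Gauss-sum and Fourier manipulations are routine.
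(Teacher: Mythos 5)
Your proposal is correct, and it follows the paper's proof through its two essential steps: specializing \eqref{eigen4} to $c=0$ to get the factor $2$ and the sum $2\sum_{t\in A\cup B}\sum_d G(\chi_N^{-d})^2\chi_N^d(r\omega^{4t})$, and the identification $4(A\cup B)\equiv T\pmod N$ (which you justify in somewhat more detail than the paper does, via $x^2\in F^\ast w_1^{2d_i}$ for $x\in X$; the paper simply asserts it from the definition of $A,B$ together with Theorem~\ref{thm:Yama}). Where you diverge is the endgame. The paper converts $\sum_{t\in A\cup B}\chi_N^d(\omega^{4t})$ into $G(\chi_N^d)/q$ by Theorem~\ref{thm:Yama}, cancels via $G(\chi_N^{-d})G(\chi_N^d)=q^3$ to leave a single Gauss sum $2q^2\sum_d G(\chi_N^{-d})\chi_N^d(r)$, expands it once more, and applies orthogonality in $d$. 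You instead expand both factors $G(\chi_N^{-d})$, apply orthogonality to land on the triple-convolution count $\mathcal C=(\mathbf 1_T*\mathbf 1_T*\mathbf 1_{-T})(k)$, and evaluate it via the $(q^2+q+1,q+1,1)$ Singer difference set property of $T$. The two routes are equivalent in content---the difference-set identity $|\widehat{\mathbf 1_T}(\chi)|^2=q$ is exactly $G(\chi_N^d)\overline{G(\chi_N^d)}=q^3$ filtered through Theorem~\ref{thm:Yama}---so yours is a little longer but trades one Gauss-sum manipulation for a classical combinatorial fact; your final arithmetic ($\mathcal C=(q+1)+q\mathbf 1_T(k)$ and $N-(q+1)^2=-q$) checks out and reproduces $P_2=2(q+1)(1-q^3)+2q^3\delta_r$.
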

\proof
Recall that  $T=\{\log_{\omega}(x)\,(\mod{N}):\,\Tr(x)=0\}$.
Note that $4(A\cup B)\equiv T\,(\mod{N})$ by
the definition of $A,B$
and the fact that $G(\chi_{N}^d)=q\sum_{t\in A\cup B}\chi_{N}^d(\omega^{4t})$ which follows from Theorem~\ref{thm:Yama}. We have
\begin{equation}
P_2=2\sum_{t\in A\cup B}\sum_{d=0}^{N-1}G(\chi_{N}^{-d})G(\chi_{N}^{-d})
\chi_N^d(r)
\chi_{N}^{d}(\omega^{4t}) 
=\frac{2}{q} \sum_{d=1}^{N-1}G(\chi_{N}^{-d})G(\chi_{N}^{-d})
\chi_N^d(r)
G(\chi_{N}^{d})+2(q+1).
\label{eigen5}
\end{equation}
Since $G(\chi_{N}^{-d})G(\chi_{N}^{d})=q^3$ and $G(\chi_{N}^d)=q\sum_{x\in T}\chi_{N}^d(\omega^{x})$, continuing from (\ref{eigen5}), we have
\begin{align*}
P_2=&\,2q^2\sum_{d=1}^{N-1}G(\chi_{N}^{-d})
\chi_N^d(r)+2(q+1)
=2q^3\sum_{d=1}^{N-1}\sum_{x\in T}\chi_{N}^{-d}(\omega^{x})
\chi_{N}^{d}(r)+2(q+1)\nonumber\\
=&\,2q^3\sum_{d=0}^{N-1}\sum_{x\in T}\chi_{N}^{-d}(\omega^{x})
\chi_{N}^{d}(r)+2(q+1)-2(q+1)q^3\nonumber\\
=&\,2(q+1)(1-q^3)+2q^3\cdot \left\{
\begin{array}{ll}
N,& \mbox{ if $\log_\omega(r) (\mod{N})\in T$} \\
0, & \mbox{ if $\log_\omega(r) (\mod{N})\not\in T$}
 \end{array}
\right.
\end{align*}
The conclusion of the lemma now follows.
\qed
\vspace{0.3cm}

It remains to evaluate $P_3$.

\begin{lemma}\label{lemma:c1,3}
Let $\eta$ be the quadratic character of $E$, and
define $n_{r'}=\chi_4(r')+\chi_4^3(r')+ i(\chi_4(r')-\chi_4^3(r'))$. Then
\begin{equation}
P_3=\eta(2)n_{r'}G(\eta)\Big(2N\sum_{t\in \overline{X}}\psi_E(r^{\frac{N+1}{2}}\omega^t C_0^{(2N,q^3)})
+N-q\delta_{r}'\Big),
\end{equation}
where $\delta_r'$ is defined by
\begin{align*}
\delta_r'= \left\{
\begin{array}{ll}
0,& \mbox{ if $\log_\omega(r)\; (\mod{N}) \in T$,}\\
N,& \mbox{ if $\log_\omega(r)\; (\mod{N})\not \in T$ and\;  $\log_\omega(r^{N+1})\; (\mod{2N}) \in T'$,} \\
-N,& \mbox{ if $\log_\omega(r)\; (\mod{N})\not \in T$ and \; $\log_\omega(r^{N+1}) \;(\mod{2N})\not \in T'$,}
 \end{array}
\right.
\end{align*}
where $T'=\{\log_\omega(x)\,(\mod{2N})\,|\, \Tr(x)=1\}$.
\end{lemma}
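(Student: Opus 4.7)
The plan is to reduce $P_3$ step by step via Gauss-sum identities and the structure of $\overline{X}$, then identify the remaining combinatorial count geometrically. First, combine the $c=1$ and $c=3$ contributions in \eqref{eigen4}: the Gauss-sum factor $G(\chi_{4N}^{Nc-4d})G(\chi_{4N}^{-Nc-4d})$ is symmetric under $c \mapsto -c \pmod{4}$ and equals $G(\chi_4\chi_N^{-d})G(\chi_4^{-1}\chi_N^{-d})$, while the $c$-dependent scalar combines to $(1+i)\chi_4(r') + (1-i)\chi_4^3(r') = n_{r'}$, and the $A$--$B$ contributions fuse into the single difference $R(d) := \sum_{t\in A}\chi_N^d(\omega^{4t}) - \sum_{t\in B}\chi_N^d(\omega^{4t})$. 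Applying Theorem~\ref{thm:Stickel2} with $\ell=2$, $\theta=\eta$, $\chi=\chi_4\chi_N^{-d}$, and using $\chi_N|_{F^*}\equiv 1$ (so $\chi_N^{-2d}(2)=1$ and $\chi^2(2)=\eta(2)$), one gets $G(\chi_4\chi_N^{-d})G(\chi_4^{-1}\chi_N^{-d}) = \eta(2)G(\eta)G(\eta\chi_N^{-2d})$, hence
\[ P_3 = \eta(2)n_{r'}G(\eta)\sum_{d=0}^{N-1}G(\eta\chi_N^{-2d})\chi_N^d(r)R(d). \]

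Second, recast $R(d)$ as a sum over $\overline{X}$. Using $\overline{X} = 2A\cup(2B+N)\pmod{2N}$ together with $\eta(\omega^N)=-1$ (since $N$ is odd), $\eta(\omega^{2a})=1$, and $\chi_N(\omega^N)=1$, a direct check shows $R(d) = \sum_{x\in\overline{X}}\eta(\omega^x)\chi_N^{2d}(\omega^x)$. Re-indexing $e=2d\pmod N$ (valid since $\gcd(2,N)=1$, with $2^{-1}\equiv(N+1)/2\pmod N$) yields
\[ P_3 = \eta(2)n_{r'}G(\eta)\sum_{x\in\overline{X}}\eta(\omega^x)\sum_{e=0}^{N-1}G(\eta\chi_N^{-e})\chi_N^e(y_x),\qquad y_x := r^{(N+1)/2}\omega^x. \]

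Third, evaluate the inner sum. Unfolding $G(\eta\chi_N^{-e})$, swapping summations, and using $\ker(\chi_N)=F^*$ together with $\eta|_{F^*}=\eta'$ gives $\sum_e G(\eta\chi_N^{-e})\chi_N^e(y) = N\eta(y)\sum_{z\in F^*}\eta'(z)\psi_F(z\Tr(y))$. By the elementary identity $\sum_{z\in F^*}\eta'(z)\psi_F(zc) = 2\sum_{s\in S}\psi_F(sc)+1-q\delta_{c,0}$ (Kronecker delta), this recasts the inner sum as $N\eta(y)\bigl[2\psi_E(yS)+1-q\delta_{\Tr(y),0}\bigr]$. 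Since $q\equiv 1\pmod 4$ forces $(N+1)/2$ to be even, $\eta(r^{(N+1)/2})=1$ and $\eta(\omega^x)\eta(y_x)$ collapses to $1$. Summing over $x\in\overline{X}$, noting $|\overline{X}|=q+1$, and setting $n_0 := |\{x\in\overline{X}:\Tr(y_x)=0\}|$ and $T_{u_0} := \sum_x\psi_E(y_xS)$, one arrives at
\[ P_3 = \eta(2)n_{r'}G(\eta)\bigl[2NT_{u_0} + N - Nq(n_0-1)\bigr]. \]

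Finally, to match the lemma's right-hand side it remains to prove $n_0 = 1 + \delta_r'/N$; this is the main obstacle and is geometric in nature. A short calculation with the definition of $X$ shows $\log_\omega x_i \equiv (q-1)d_i\pmod N$ for each $x_i\in X$, so $\{\la\omega^t\ra : t\in\overline{X}\} = \{\la w_1^{d_i}\ra : d_i\in I_\cQ\} = \cQ$. Hence $n_0 = |\cQ\cap L'|$, where $L' = \{\la v\ra : \Tr(\rho v)=0\}$ is the polar of $\la\rho\ra$ with $\rho = r^{(N+1)/2}$. By Lemma~\ref{lem_sgn}, this intersection has size $1$, $2$, or $0$ according as $\la\rho\ra$ lies on, exterior to, or interior to $\cQ$, a trichotomy determined by the sign of $f(\rho) = \Tr(\rho^2) = \Tr(r^{N+1})$. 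Since $\Tr(r^{N+1}) = N(r)\Tr(r)$ with $N(r)\in F^*$, the condition $\Tr(r^{N+1})=0$ is equivalent to $\Tr(r)=0$, i.e.\ $\log_\omega r\pmod N\in T$; moreover $\Tr(r^{N+1})$ is a nonzero square (respectively nonsquare) in $F^*$ precisely when $\log_\omega r^{N+1}\pmod{2N}\in T'$ (respectively $\notin T'$). Matching the three cases against the definition of $\delta_r'$ gives $n_0 = 1+\delta_r'/N$, completing the proof.
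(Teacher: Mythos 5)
Your proof is correct, and while the first half coincides with the paper's, the second half takes a genuinely different route. Both arguments begin identically: combine the $c=1,3$ terms into the coefficient $n_{r'}$ times the difference $\sum_{t\in A}-\sum_{t\in B}$, apply the Hasse--Davenport product formula once to replace $G(\chi_4\chi_N^{-d})G(\chi_4^{-1}\chi_N^{-d})$ by $\eta(2)G(\eta)G(\eta\chi_N^{-2d})$, and absorb the $A$/$B$ split into a single sum over $\overline{X}$ using $\eta(\omega^N)=-1$ and $\eta(r^{(N+1)/2})=1$. The divergence is in how the residual term beyond $2N\sum_{t\in\overline{X}}\psi_E(r^{(N+1)/2}\omega^t C_0^{(2N,q^3)})$ is produced and evaluated. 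The paper splits the character sum over $\{\chi_N^{2d}\eta\}$ as (all characters of order dividing $2N$) minus (those of order dividing $N$), and then evaluates the subtracted piece by a second application of Hasse--Davenport together with two uses of Theorem~\ref{thm:Yama} on Eisenstein sums, arriving at the $T/T'$ trichotomy through orthogonality over cosets of $\langle \omega^{2N}\rangle$ and $\langle\omega^N\rangle$. You instead unfold $G(\eta\chi_N^{-e})$ directly, collapse the inner sum to $N\eta(y)\sum_{z\in F^*}\eta'(z)\psi_F(z\Tr(y))$, split off the main term with the elementary square-indicator identity, and reduce the leftover to the count $n_0=|\cQ\cap L'|$ of conic points on the polar line of $\langle r^{(N+1)/2}\rangle$; the tangent/secant/passant trichotomy of Lemma~\ref{lem_sgn} (with $\epsilon=1$ from Lemma~\ref{lem_sgn_prod}) then delivers $n_0\in\{1,2,0\}$ in exact correspondence with the three cases of $\delta_r'$, via $\Tr(r^{N+1})=\Norm(r)\Tr(r)$. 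Your version buys a more transparent explanation of where the $T$ versus $T'$ case distinction comes from (it is literally the position of a point relative to the conic $\cQ$ of Section~\ref{sec:setX}) and avoids the second Hasse--Davenport and Eisenstein-sum steps; the paper's version stays uniformly inside the Gauss-sum calculus used throughout Section~\ref{proofPDS} and never re-invokes the projective geometry of Section~\ref{sec:setX}. All the small facts you rely on (symmetry of the Gauss-sum factor under $c\mapsto -c$, $\chi_N|_{F^*}=1$, $\eta|_{F^*}=\eta'$, $(N+1)/2$ even for $q\equiv 1\pmod 4$, disjointness of $T'$ and $T'+N$) check out.
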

\begin{remark}\label{rem:nr}
{\em Before going into the proof of the lemma, we make the following observations.
\begin{enumerate}
\item[(i)] The $n_{r'}$ in Lemma~\ref{lemma:c1,3} can be explicitly evaluated:
\[
n_{r'}=
\left\{
\begin{array}{ll}
2& \mbox{ if $\chi_4(r')=1$ or 
$\chi_4(r')=-i$, 
}\\
-2& \mbox{ if $\chi_4(r')=-1$ or 
$\chi_4(r')=i$.
}
 \end{array}
\right.
\]
\item[(ii)] The character sum
$\sum_{t\in \overline{X}}\psi_E(r^{\frac{N+1}{2}}\omega^tC_0^{(2N,q^3)})=
\sum_{t\in \overline{X}}\psi_F(\Tr(r^{\frac{N+1}{2}} \omega^t) S)
$ in Lemma~\ref{lemma:c1,3} has been already evaluated in Theorem~\ref{thm:main2} (and Remark~\ref{tuvalue}) as:
\[
T_{\log_{\omega}(r^{\frac{N+1}{2}})}=
\left\{
\begin{array}{ll}
\frac{q-1}{2}+q\frac{-1+\eta'(2) G(\eta')}{2},& \mbox{ if $\log_{\omega} (r^\frac{N+1}{2})\;(\mod{2N})\in \overline{X}$,}\\
\frac{q-1}{2}+q\frac{-1-\eta'(2) G(\eta')}{2},& \mbox{ if  $\log_{\omega} (r^\frac{N+1}{2})\;(\mod{2N})\in \overline{X}+N$,}\\
\frac{q-1}{2},& \mbox{ if $\log_\omega(r) (\mod{N})\not \in T$ and  $\log_\omega(r^{N+1})\; (\mod{2N}) \in T'$,} \\
-\frac{q+1}{2},& \mbox{ if $\log_\omega(r) (\mod{N})\not \in T$ and  $\log_\omega(r^{N+1})\; (\mod{2N})\not \in T'$.}
 \end{array}
\right.
\]
\end{enumerate}}
\end{remark}
{\bf Proof of Lemma~\ref{lemma:c1,3}:\,} First we note that
\begin{align}
&\,P_3=n_{r'}\sum_{d=0}^{N-1}G(\chi_{4N}^{N-4d})G(\chi_{4N}^{-N-4d})
\chi_{N}^{d}(r)
\Big(\sum_{t\in A}\chi_{N}^{d}(\omega^{4t})
-\sum_{t\in B}
\chi_{N}^{d}(\omega^{4t})\Big).  \label{eigen6}
\end{align}
Applying the Hasse-Davenport product formula with $\ell=2$, $\chi=\chi_N^{-d}\chi_{4}$, and $\theta=\eta$, we have
\begin{equation}\label{HDp1}
G(\chi_N^{-d}\chi_4)G(\chi_N^{-d}\chi_4^3)=\eta(2)G(\eta)G(\chi_N^{-2d}\eta).
\end{equation}

Substituting Eqn.~(\ref{HDp1}) into Eqn.~(\ref{eigen6}), we obtain
\begin{align}
\frac{\eta(2)}{n_{r'}G(\eta)}\cdot P_3=&\,\sum_{d=0}^{N-1}G(\chi_N^{-2d}\eta)
\chi_{N}^{d}(r)
\Big(\sum_{t\in A}\chi_{N}^{2d}(\omega^{2t})
-\sum_{t\in B}
\chi_{N}^{2d}(\omega^{2t})\Big) \nonumber\\
=&\,\sum_{d=0}^{N-1}
G(\chi_N^{-2d}\eta)
\chi_{N}^{2d}(r^{\frac{N+1}{2}})\eta(r^{\frac{N+1}{2}})
\Big(\sum_{t\in A}\chi_{N}^{2d}\eta(\omega^{2t})+\sum_{t\in B}\chi_{N}^{2d}\eta(\omega^{2t+N})\Big)\nonumber\\
=&\,\sum_{t\in \overline{X}}\sum_{d=0}^{N-1}
G(\chi_N^{-2d}\eta)
\chi_N^{2d}\eta(r^{\frac{N+1}{2}})
\chi_N^{2d}\eta(\omega^{t})
\end{align}
where we have used the definition of $\overline{X}$ and the fact that $\eta(r^\frac{N+1}{2})=1$. By orthogonality of characters, we have
\begin{equation}\label{eigen8}
\frac{\eta(2)}{n_{r'}G(\eta)}\cdot P_3=\,2N\sum_{t\in \overline{X}}\psi_E(r^{\frac{N+1}{2}}\omega^t C_0^{(2N,q^3)})-\sum_{t\in \overline{X}}\sum_{d=0}^{N-1}
G(\chi_N^{-d})
\chi_{N}^{d}(r^{\frac{N+1}{2}}\omega^t).
\end{equation}
Finally, we evaluate the second sum in Eqn.~(\ref{eigen8}).
Using $G(\chi_N^{2^{-1}d})=q\sum_{t\in \overline{X}}\chi_{N}^{d}(\omega^t)$ and applying the Hasse-Davenport product formula again with $\ell=2$, $\chi=\chi_N^{-2^{-1}d}$, and
$\theta=\eta$,  we can rewrite the second sum in Eqn.~(\ref{eigen8}) as follows:
\begin{align}
\sum_{t\in \overline{X}}\sum_{d=0}^{N-1}
G(\chi_N^{-d})
\chi_{N}^{d}(r^{\frac{N+1}{2}}\omega^t)
=&\,\frac{1}{q}\sum_{d=1}^{N-1}
G(\chi_N^{-d})
G(\chi_N^{2^{-1}d})\chi_N^{d}(r^{\frac{N+1}{2}})-(q+1)\nonumber\\
=&\,\frac{G(\eta)}{q} \sum_{d=1}^{N-1}
G(\chi_N^{-2^{-1}d}\eta)\chi_N^{d}(r^{\frac{N+1}{2}})-(q+1)\nonumber\\
=&\,\frac{G(\eta)}{q} \sum_{d=0}^{N-1}
G(\chi_N^{-d}\eta)\chi_N^{d}\eta(r^{N+1})-N.\label{eigen9}
\end{align}
Let $\chi_{2N}$ be a multiplicative character of order $2N$ of $E$ and
$\eta'$ be the quadratic character of $F$.
We now use the following formula:
\[
G(\chi_N^{-d}\eta)=\Big(\sum_{x\in E: \Tr(x)=1}\chi_N^{-d}\eta(x)\Big)G(\eta'),
\]
which follows from Theorem~\ref{thm:Yama}.
Noting that $G(\eta)G(\eta')=q^2$ (c.f.~\cite[Theorem~5.18]{LN97}), we have
\begin{align*}
&\sum_{t\in \overline{X}}\sum_{d=0}^{N-1}
G(\chi_N^{-d})
\chi_{N}^{d}(r^{\frac{N+1}{2}}\omega^t)+N\\&=\,\frac{G(\eta)}{q}G(\eta')\sum_{d=0}^{N-1}
\Big(\sum_{x\in E: \Tr(x)=1}\chi_N^{-d}\eta(x)\Big)\chi_N^{d}\eta(r^{N+1})\\
&=\,q\sum_{d=0}^{2N-1}
\Big(\sum_{x\in E: \Tr(x)=1}\chi_{2N}^{-d}(x)\Big)\chi_{2N}^{d}(r^{N+1})-q\sum_{d=0}^{N-1}
\Big(\sum_{x\in E: \Tr(x)=1}\chi_{N}^{-d}(x)\Big)\chi_N^{d}(r^{N+1})\\
&=q\cdot \left\{
\begin{array}{ll}
0,& \mbox{ if $\log_\omega(r)\; (\mod{N}) \in T$,}\\
N,& \mbox{ if $\log_\omega(r)\; (\mod{N})\not \in T$ and  $\log_\omega(r^{N+1}) (\mod{2N}) \in T'$,} \\
-N,& \mbox{ if $\log_\omega(r)\; (\mod{N})\not \in T$ and  $\log_\omega(r^{N+1}) (\mod{2N})\not \in T'$.}
 \end{array}
\right.
\end{align*}
This completes the proof of the lemma.
\qed
\vspace{0.3cm}


We are now ready to complete the proof of Theorem~\ref{thm:main}. \vspace{0.1cm}

\noindent{\bf Proof of Theorem~\ref{thm:main}:}\,\,
If either $a=0$ or $b=0$,  then $\psi_{a,b}(D)=-\frac{q^2-1}{2}$ by Lemma~\ref{lemma:ab0}. If $a\neq 0$ and $b\not=0$,
by Lemmas~\ref{lemma:c0}, \ref{lemma:c1,3}, and Remark~\ref{rem:nr}, we have
\begin{align*}
\frac{4(q^3-1)}{q-1}S_{a,b}=&\,2(q+1)(1-q^3)+2q^3 \delta_r+\eta(2)n_{r'}G(\eta)(2NT_{\log_{\omega}(r^{\frac{N+1}{2}})}+N-q\delta_r')
\\
=&2(q+1)(1-q^3)+\left\{
\begin{array}{ll}
4q^3 N,& \mbox{if $\log_{\omega} (r^\frac{N+1}{2})\;(\mod{2N})\in \overline{X}$, and $n_{r'}=2$,}\\
 & \mbox{\, or  $\log_{\omega} (r^\frac{N+1}{2})\;(\mod{2N})\in \overline{X}+N$, and $n_{r'}=-2$,}\\
0, & \mbox{otherwise.}
 \end{array}
\right.
\end{align*}
Thus by Lemma~\ref{lem:abnot0} and Remark~\ref{re:transeq}, the value of $\psi_{a,b}(D)$  in this case is given by
$-\frac{q^2-1}{2}$ or $q^3-\frac{q^2-1}{2}$.
\vspace{0.3cm}

Finally, we show that $\psi_{a,b}(D)=q^3-\frac{q^2-1}{2}$  if and only if
$(b,a)\in D$.
If $(b,a)\in D$, then there are $x\in F^*$, $y\in C_0^{(q-1,q^3)}$, $z\in C_0^{(4N,q^3)}$, $u\in X$, and $v\in \{1,\omega^{N}\}$ such that
$b= xy$ and $a=xy^{-1}z u^2v$. Then,
$r:=ab=x^2z u^2v$ and $r':=a^{-1}b=yz^{-1}u^{-2}v^{-1}$, and they satisfy
\[
\log_{\omega}(r^{\frac{N+1}{2}})\,(\mod{2N})\equiv
\log_{\omega}(u^{N+1})\,(\mod{2N})\in \overline{X}+\log_{\omega}(u^N)
\]
and
\[
\chi_4(r')=\chi_4(u^{-2}v^{-1})
=\left\{
\begin{array}{ll}
1,& \mbox{ if $u$ is a square and $v=1$,} \\
-i,& \mbox{ if $u$ is a square and $v=\omega^{N}$,} \\
-1,& \mbox{ if  $u$ is a nonsquare and $v=1$,} \\
i,& \mbox{ if  $u$ is a nonsquare and $v=\omega^{N}$.}
 \end{array}
\right.
\]
If $u$ is a square,  then $\log_{\omega}(r^{\frac{N+1}{2}})\;(\mod{2N})\in \overline{X}$ and $n_{r'}=2$.
If $u$ is a nonsquare,  then $\log_{\omega}(r^{\frac{N+1}{2}})\;(\mod{2N})\in \overline{X}+N$ and $n_{r'}=-2$.
Thus, in both cases, we have $\psi_{a,b}(D)=q^3-\frac{q^2-1}{2}$. The converse also holds since the size of the dual of $D$ is $|D|$, c.f.
\cite[Theorem 3.4]{M94}. The proof of the theorem is now complete. \qed

\begin{remark}
\begin{enumerate}
\item It is clear that the subset $D$ defined in (\ref{def:D}) is disjoint from both $\{(y,0)\mid y\in E^*\}$ and $\{(0,y)\mid y\in E^*\}$. By adding one of these sets to $D$, we therefore obtain a Cameron-Liebler line class of parameter $\frac{q^2+1}{2}$.

\item We observe that $I_X=2\overline{X}\cup (2\overline{X}+N)$. Consider the element $\sigma\in P\Gamma O^+(5,q)$ defined by $\sigma((x,y))=(x^q,y^q),\,\forall (x,y)\in E\times E$. This automorphism $\sigma$ belongs to the embedded image of $P\Gamma L(4,q)$ in $P\Gamma O^+(5,q)$ induced by the Klein correspondence. We claim that $\sigma$ stabilizes our line set ${\mathcal L}$. This amounts to the fact that $D$ is invariant under the map $(x,y)\mapsto(x^q,y^q)$, which follows from the fact that $\overline{X}$ is invariant under multiplication by $q$ as proved in Remark \ref{rem_aut_pre}. Summing up, our line set ${\mathcal L}$ has an automorphism group isomorphic to $(\Z_{q^2+q+1}\times \Z_{q-1})\rtimes \Z_3$.

\end{enumerate}
\end{remark}

\section{Affine two-intersection sets}\label{sec:aff}
A set $\cK$ of points of a projective or affine plane is called a {\it set of type $(m,n)$} if every line of the plane intersects $\cK$ in $m$ or $n$ points; we assume that $m<n$, and we require both values to occur. There are many known sets of type $(m,n)$ in $\PG(2,q)$ for both even and odd $q$, e.g., a maximal arc of degree $n$ in $\PG(2,2^f)$ is a set of type $(0,n)$ with $n|2^f$, and a unital in $\PG(2,q^2)$ is  a set of type $(1,q+1)$. The situation is quite different for affine planes. When $q$ is even, let $\cK$ be a maximal arc of degree $n$ in $\PG(2,q)$ and let $\ell$ be a line of $\PG(2,q)$ such that $|\ell\cap \cK|=0$. Then $\cK$ is a set of type $(0,n)$ in $\AG(2,q)=\PG(2,q)\setminus \ell$. Since nontrivial maximal arcs do not exist in $\PG(2,q)$ when $q$ is odd, c.f.~\cite{BBM},  the construction just mentioned does not work in $\AG(2,q)$, $q$ odd. In fact, for affine planes of odd order, we only know examples of sets of type $(m,n)$ in affine planes of order $9$ and in $\AG(2,81)$. Penttila and Royle \cite{PR} classified sets of type $(3,6)$ in all affine planes of order $9$ by exhaustive computer search. In \cite{rodgers}, Rodgers developed a method to obtain new affine two-intersection sets in $\AG(2,3^{2e})$ by establishing certain tactical decompositions of the points and lines of $\PG(3,3^{2e})$ induced by Cameron-Liebler line classes with some nice properties. He was thus able to rediscover an example of sets of type $(3,6)$ in $\AG(2,9)$, and obtain a new example of affine two-intersection sets in $\AG(2,81)$. In his thesis \cite{Rthesis}, Rodgers made the following conjecture.
\begin{conj}[\cite{Rthesis}]\label{conj_aff2int}
For each integer $e\ge 1$, there exists a set of type $(\frac{1}{2}(3^{2e}-3^e), \frac{1}{2}(3^{2e}+3^e))$ in $\AG(2,3^{2e})$.
\end{conj}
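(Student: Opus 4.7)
The plan is to derive the affine two-intersection set from the Cameron-Liebler line class constructed in Theorem~\ref{thm:main}, specialized to $q=3^{2e}$, by adapting the tactical-decomposition method developed by Rodgers in \cite{Rthesis}.

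First I would verify the hypothesis: $q=3^{2e}=9^e\equiv 9\pmod{12}$ for every $e\ge 1$, so Theorem~\ref{thm:main} supplies a Cameron-Liebler line class $\cL$ in $\PG(3,q)$ with parameter $x=\frac{q^2-1}{2}$. Its image $\cM\subset \cQ^+(5,q)$ inherits the automorphism group $(\Z_{q^2+q+1}\times \Z_{q-1})\rtimes \Z_3$ recorded at the end of Section~\ref{proofPDS}, and Result~\ref{res_1} pins down the hyperplane intersection numbers $\frac{q^2-1}{2}(q+1)+q^2$ and $\frac{q^2-1}{2}(q+1)$. Via the Klein correspondence, these impose rigid counts on how $\cL$ distributes over the line-pencils through each point and over the line-sets of each plane of $\PG(3,q)$.

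Next, following Rodgers's framework, I would choose a plane $\pi$ of $\PG(3,q)$ and a line $\ell\subset\pi$ compatible with a large subgroup of the stabilizer of $\cL$, and define the candidate set $\cK\subset \pi\setminus\ell = \AG(2,q)$ as the set of affine points $P$ at which a specific function induced by $\cL$---such as the number of lines of $\cL$ through $P$ lying in $\pi$, or the orbit-index of $P$ under a derived cyclic action---takes a prescribed value selected by the tactical decomposition. The subset $\overline{X}\subset\Z_{2N}$ from Section~\ref{sec:setX} will naturally parametrize the orbits contributing to $\cK$, and the concrete description of $D$ in \eqref{def:D} then translates the definition of $\cK$ into an explicit subset of $\F_q^2$ amenable to Fourier analysis over $\F_q$.

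To verify the two-intersection property, I would compute, for each line $m$ of $\AG(2,q)$, the intersection $|\cK\cap m|$ as a character sum over $\F_q^2$. The same reductions as in the proof of Theorem~\ref{thm:main}---orthogonality over $\F_q^\ast$ followed by a cyclotomic decomposition---turn this into a linear combination of the Gauss-period values that already appeared in Theorem~\ref{thm:main2} and Remark~\ref{tuvalue}: $\frac{q-1}{2}$, $-\frac{q+1}{2}$, and $\frac{q-1}{2}\pm\frac{1}{2}qG(\eta')$, where $G(\eta')$ is the quadratic Gauss sum over $\F_q$.

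The main obstacle will be the collapse from the three naive intersection values arising from the tangent--passant--secant trichotomy of Theorem~\ref{thm:main2} down to only the two target values $\frac{q-\sqrt{q}}{2}$ and $\frac{q+\sqrt{q}}{2}$. This is exactly where the hypothesis $q=3^{2e}$ becomes decisive: over $\F_{3^{2e}}$ one has $|G(\eta')|=\sqrt{q}=3^e$, which places $\frac{q-1}{2}\pm\frac{1}{2}qG(\eta')$ at the correct distance from $q/2$ to match the target sizes, while the facts that $2=-1$ is a square in $\F_{3^{2e}}$ (so $\sgn(2)=1$ in Remark~\ref{tuvalue}) and that $\gcd(q-1,N)=1$ align the contributions from the three line types so that an appropriate union of orbits realizes exactly two distinct intersection values. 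Pinning down the correct such union---and checking that the resulting $\cK$ has the size forced by only two intersection numbers---will be the crux of the argument.
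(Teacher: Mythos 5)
Your proposal takes the route the authors explicitly could not complete: they state in Section~\ref{sec:aff} that they were unable to verify that the tactical decompositions induced by the Cameron-Liebler line classes of Section~\ref{proofPDS} have the properties Rodgers' method requires. The paper instead proves the conjecture by a \emph{direct} construction that does not pass through $\cL$ at all: in the model of $\AG(2,q)$ obtained by deleting the trace-zero points from $\PG(2,q)$, the set is simply $\cK=\{k\in\Z_N:\Tr(w_1^k)\in C_0^{(4,q)}\cup C_1^{(4,q)}\}$, i.e.\ the points whose trace lies in a prescribed pair of quartic cyclotomic classes of $F$. The connection to Sections 3--4 is only thematic (fourth-power cyclotomy and the same plane model), not logical.

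The genuine gap in your plan is the claim that the affine intersection counts reduce to the Gauss-period values of Theorem~\ref{thm:main2}. They do not. Computing $|\cK\cap\ell|$ leads (Lemma~\ref{lem:reduce}) to the complete character sums $H_{\gamma,j}=\sum_{x\in F}\chi_4^j(1+\gamma x)$, which after taking norms become sums $\sum_{x\in F}\chi_4^{-j}(f_\gamma(x))$ of a quartic character over a cubic polynomial, equivalently generalized Kloosterman sums $K_{j,z}=\sum_{a\in F^*}\chi_4^j(a)\psi(za+a^{-1})$. Weil only gives $|K_{j,z}|\le 2\sqrt{q}$, and to get exactly two intersection sizes one must show the modulus is \emph{exactly} $3^e=\sqrt{q}$. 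This is the crux, and it is where $q=3^{2e}$ is decisive---not because $|G(\eta')|=\sqrt{q}$ (true for every $q$), but because Stickelberger's theorem (Theorem~\ref{stick}) together with a $3$-adic add-with-carry digit analysis of $h$ and $h+\frac{q-1}{4}$ (whose base-$3$ expansion is $0202\cdots02$) shows $3^e\mid G(\chi^{-h})G(\chi^{-h-\frac{q-1}{4}j})$ for all $h$, hence $3^e\mid K_{j,z}$; combined with the Weil bound and an integrality check on the resulting intersection sizes, this forces $|K_{j,z}|=3^e$. Nothing in your proposal addresses how to beat the Weil bound, and without that the "collapse to two values" cannot be established.
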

Since we just established the existence of Cameron-Liebler line classes of parameters $\frac{3^{2e}-1}{2}$ in $\PG(3,3^{2e})$ for any $e\ge 1$, it is natural to ask whether Rodgers' method can be applied to these new line classes to produce affine two-intersection sets in $\AG(2,3^{2e})$. We have not been able to check whether the tactical decompositions induced by the Cameron-Liebler line classes constructed in Section 4 have the properties predicted by Rodgers. On the other hand, we are able to establish Conjecture~\ref{conj_aff2int} by a direct and explicit algebraic construction. Before doing so, we describe the model of $\AG(2, 3^{2e})$ we are going to use.
\begin{notation}\label{nota_AG} {\em Let $q=3^{2e}$ with $e\geq 1$. Write $N=q^2+q+1$, and let $\omega$ be a primitive element of $E=\F_{q^3}$.  Let $w_1$ be an element of order $N$ in $E$.  We identify the points of $\PG(2,q)$ with $\Z_N$ as follows: View $E$ as a 3-dimensional vector space over $F=\F_q$, and use $E$ as the underlying vector space of $\PG(2,q)$. We identify the projective point $\la w_1^i\ra$ with $i\in\Z_N$, $0\le i\leq N-1$. Let $J=\{i:\,i\in\Z_N,\,\Tr(w_1^i)=0\}$. We obtain the affine plane $\AG(2,q)$ from $\PG(2,q)$ by deleting $J$. The points and lines of $\AG(2,q)$ are given below:
\begin{enumerate}
\item the points: $\Z_N\setminus J$,
\item the lines: $\l_{i}=\{j\in \Z_N\setminus J : \Tr(w_1^{i+j})=0\}$, $1\leq i\leq N-1$.
\end{enumerate}}

\end{notation}
The following is our main theorem of this section.
\begin{theorem} \label{conj:affine} With notation as above,  let $w_0=\omega^{N}\in F$ and  $C_i^{(4,q)}=w_0^i\langle w_0^4\rangle$, $0\leq i\leq 3$. Then $\cK:=\{k\in \Z_N:\, \Tr(w_1^k)\in C_0^{(4,q)}\cup C_1^{(4,q)}\}\subseteq \Z_N\setminus J$  is a set of type $(\frac{1}{2}(3^{2e}-3^e), \frac{1}{2}(3^{2e}+3^e))$ in $\AG(2,3^{2e})$.
\end{theorem}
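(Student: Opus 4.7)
The plan is to parameterize each affine line $\ell_u$ and recast $|\cK\cap\ell_u|$ as a character sum over $F$, then evaluate the sum using Gauss sums. For $u\neq 0$, choose a basis $\{a,b\}$ of the $2$-dimensional $F$-subspace $L_u\subset E$ with $\Tr(a)=0$ and $\Tr(b)=1$. The $q$ points of $\ell_u$ are exactly $\la b+sa\ra$ for $s\in F$, each satisfying $\Tr(b+sa)=1$. For each $s$, let $k(s)\in\Z_N$ be the unique integer with $\la w_1^{k(s)}\ra=\la b+sa\ra$, so that $w_1^{k(s)}=c(s)(b+sa)$ for a unique $c(s)\in F^*$; then $\Tr(w_1^{k(s)})=c(s)$, and the point $k(s)$ lies in $\cK$ if and only if $c(s)\in C_0^{(4,q)}\cup C_1^{(4,q)}$.

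Using $\la w_1\ra=\ker N_{E/F}$ and $N_{E/F}|_{F^*}(\beta)=\beta^{q^2+q+1}=\beta^{3}$ (with $3$ invertible mod $q-1$ since $q=3^{2e}$), the $F^*$-component of $b+sa$ in the decomposition $E^*=\la w_1\ra\times F^*$ equals $N_{E/F}(b+sa)^{3^{-1}\bmod(q-1)}$, whence $c(s)$ is its inverse. A short computation then yields $\chi_4(c(s))=\Phi(b+sa)$, where $\chi_4$ is a character of $F^*$ of order $4$ with $\chi_4(w_0)=i$ and $\Phi:=\chi_4\circ N_{E/F}$ is the lifted character of $E^*$ of order $4$. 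Combining with the orthogonality identity
\[
\mathbf{1}_{y\in C_0\cup C_1}(y)=\tfrac12+\tfrac{1-i}{4}\chi_4(y)+\tfrac{1+i}{4}\chi_4^{-1}(y)\qquad(y\in F^*),
\]
gives
\[
|\cK\cap\ell_u|=\frac{q}{2}+\tfrac12\bigl(\mathrm{Re}(V)+\mathrm{Im}(V)\bigr),\qquad V:=\sum_{s\in F}\Phi(b+sa),
\]
so the theorem reduces to proving $\mathrm{Re}(V)+\mathrm{Im}(V)=\pm 3^{e}$ for every $u\neq 0$, with both signs occurring as $u$ varies.

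The evaluation of $V=\Phi(a)\sum_{s\in F}\Phi(s+c)$ (with $c:=b/a\in E\setminus F$) proceeds by additive Fourier inversion on $E$: expand $\Phi(s+c)$ using the Gauss sum $G(\Phi)$ on $E$, swap summation order, and use $\sum_{s\in F}\psi_E(sy)=q\cdot\mathbf{1}_{\Tr(y)=0}$ to localize the dual variable $y$ onto $\ker\Tr$. This reduces $V$ to a character sum of $\Phi^{-1}$ over a $2$-dimensional $F$-subspace of $E$, which is handled by the Eisenstein-sum formula (Theorem~\ref{thm:Yama}) applied to the restriction $\Phi|_{F^*}=\chi_4^{-1}$, together with the Hasse--Davenport lifting identity $G(\Phi)=-G(\chi_4)^{3}$. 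The outcome is a closed form for $V$ in terms of $G(\chi_4),G(\eta')$, and a discrete invariant $\epsilon_u\in\{\pm 1\}$ attached to $L_u$ (roughly, the quartic residue class of a canonical element associated with the point $L_u\cap J$). The Hasse--Davenport product formula (Theorem~\ref{thm:Stickel2}) with $\ell=2$ supplies the crucial identity relating $G(\chi_4)^{2}$ and $G(\eta')^{2}$; together with the classical evaluation $G(\eta')^{2}=\eta'(-1)q=q=3^{2e}$ (valid because $q\equiv 1\pmod 4$ with $F$ of odd characteristic, so that $G(\eta')=\pm 3^{e}\in\Z$), this forces the real combination $\mathrm{Re}(V)+\mathrm{Im}(V)$ to collapse to $\epsilon_u\cdot 3^{e}$.

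The main obstacle is the careful Gauss-sum bookkeeping required to pin down the sign-controlling invariant $\epsilon_u$ and to show that both values $\pm 1$ actually occur as $u$ ranges over $\Z_N\setminus\{0\}$, so that $\cK$ has two distinct intersection sizes rather than a single constant size. The key technical feature is that, although the complex argument of the individual Gauss sum $G(\chi_4)$ is delicate to control, the specific combination $\mathrm{Re}(V)+\mathrm{Im}(V)$ is a symmetric function of $G(\chi_4)$ and $G(\chi_4^{-1})$ which simplifies via the $\ell=2$ Hasse--Davenport relation to an integer expressible through $G(\eta')=\pm 3^{e}$. An alternative approach (not pursued here) would be to develop a quartic analogue of the tangent/passant/secant trichotomy of Theorem~\ref{thm:main2}, but the Gauss-sum route seems more natural given the tools already developed in Sections~\ref{sec:pre}--\ref{sec:setX}.
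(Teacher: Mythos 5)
Your opening reduction is sound and essentially reproduces the paper's Lemma~\ref{lem:reduce}: parametrizing the affine line by trace-one representatives, identifying $\Tr(w_1^{k(s)})$ with $\Norm(b+sa)^{-1/3}$ via the decomposition $E^*=\la w_1\ra\times F^*$, and using the indicator expansion to get $|\cK\cap\ell_u|=\frac{q}{2}+\frac12(\mathrm{Re}(V)+\mathrm{Im}(V))$ is correct, and it matches the paper's $\frac{q}{2}+aH_{\gamma,1}+cH_{\gamma,3}$. The theorem does indeed reduce to showing that $V$ is $3^e$ times a fourth root of unity.

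The gap is in the evaluation of $V$, and it is fatal as proposed. Additive Fourier inversion on $E$ turns $V$ into $\frac{q}{G(\Phi^{-1})}\sum_{y:\,\Tr(y)=0}\Phi^{-1}(y)\psi_E(yc)$ with $c=b/a\notin F$; because of the twist $\psi_E(yc)$ with $c\notin F$, this is \emph{not} an Eisenstein sum, and Theorem~\ref{thm:Yama} does not apply to it. In fact, if you factor out the $F^*$-action from that hyperplane sum you land back on a sum of exactly the shape $\sum_{s\in F}\Phi(b'+s a')$ --- the transformation is an exact functional equation that returns $V$ (one checks directly that $\sum_{\Tr(y)=0}\Phi^{-1}(y)\psi_E(yc)=\frac{G(\Phi^{-1})}{q}\,\Phi(a)^{-1}V$ up to the bookkeeping), so the proposed route is circular. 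More fundamentally, $V$ is (up to a root of unity) the sum $\sum_{x\in F}\chi_4^{-1}(\Norm(\gamma^{-1}+x))$, a quartic character sum of a cubic polynomial, i.e.\ a point count on a curve of positive genus; such sums do not admit closed-form evaluations via Gauss-sum identities alone, and no combination of the Hasse--Davenport product formula and $G(\eta')^2=q$ can force $\mathrm{Re}(V)+\mathrm{Im}(V)=\pm 3^e$. Indeed your argument never uses the characteristic in an essential way, whereas the statement is specific to $q=3^{2e}$ and the \emph{a priori} bound from Weil is only $|V|\le 2\cdot 3^e$. The paper closes exactly this gap with two ingredients absent from your proposal: (i) the characteristic-$3$ identity $\psi_F(ax^3)=\psi_F(a^{1/3}x)$, which (after inverting the multiplicative character by additive characters, Lemma~\ref{lem:affinesum}) linearizes the cubic term and reduces the sum to a generalized Kloosterman sum $K_{j,z}=\sum_{a\in F^*}\chi_4^j(a)\psi(za+a^{-1})$ (Lemma~\ref{lem:transGK}); and (ii) Stickelberger's theorem on the $3$-adic valuation of Gauss sums (Theorem~\ref{stick}, via Lemma~\ref{lem:GaussDiv}) to prove $3^e\mid K_{j,z}$, which combined with the Weil bound and the integrality of the intersection numbers pins the modulus to exactly $3^e$. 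Without something playing the role of (i) and (ii), your plan cannot reach the conclusion.
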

We divide the proof into a series of lemmas. The following lemma reduces the problem  to the computation of the modulus of a certain exponential sum.
\begin{lemma}\label{lem:reduce} Let $\gamma$ be any fixed element of $E\setminus F$, and let $\chi_4$ be a multiplicative character of $E^*$ of order $4$ such that
$\chi_4(\omega^N)=i$, where $i=\sqrt{-1}$.
Define $a=\overline{c}=\frac{1-i}{4}$ and
\begin{equation}\label{eqn_H}
H_{\gamma,j}:=\sum_{x\in F}\chi_4^j(1+\gamma x),\, \, j=1,3.
\end{equation}
Then the size of the set
$\{i\in \Z_N:\,\Tr(w_1^i)\in C_0^{(4,q)}\cup C_1^{(4,q)},\Tr(\gamma w_1^i)=0\}$
is equal to
\begin{equation}
\frac{q}{2}+aH_{\gamma,1}+cH_{\gamma,3}. \label{eq:finalH}
\end{equation}
\end{lemma}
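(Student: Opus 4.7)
The plan is to character-expand the indicator of $C_0^{(4,q)}\cup C_1^{(4,q)}$ using the quartic characters of $F^{\ast}$, rewriting the cardinality as $\tfrac{q}{2}$ plus two twisted exponential sums $S_1$ and $S_3$, and then to identify $S_j$ with $H_{\gamma,j}$ via Fourier inversion on $E$ together with the Davenport--Hasse lifting relation. First, since $F^{\ast}=\langle w_0\rangle$ is cyclic of order $q-1$ and $\chi_4(w_0)=i$, combining the standard orthogonality expansions of $\mathbf{1}_{C_0^{(4,q)}}$ and $\mathbf{1}_{C_1^{(4,q)}}$ on $F^{\ast}$ and using $1+i^{-k}\in\{2,1-i,0,1+i\}$ for $k=0,1,2,3$ yields
\[
\mathbf{1}_{C_0^{(4,q)}\cup C_1^{(4,q)}}(y)=\tfrac{1}{2}\mathbf{1}_{\{y\ne 0\}}+a\,\chi_4(y)+c\,\chi_4^3(y),\qquad y\in F.
\]
Substituting $y=\Tr(w_1^i)$ and summing over $\mathcal I:=\{i\in\Z_N:\Tr(\gamma w_1^i)=0\}$, one has $|\mathcal I|=q+1$; since $\gamma\notin F$, the lines $L=\{\Tr(\gamma\cdot)=0\}$ and $\ker\Tr$ of $\PG(2,q)$ are distinct and meet in exactly one projective point, so exactly $q$ indices in $\mathcal I$ have $\Tr(w_1^i)\ne 0$, and the cardinality in the lemma equals $\tfrac{q}{2}+aS_1+cS_3$ with $S_j:=\sum_{i\in\mathcal I}\chi_4^j(\Tr(w_1^i))$.

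The main step is to prove $S_j=H_{\gamma,j}$ for $j=1,3$. The crucial input is that $\chi_4$ is trivial on $\langle w_1\rangle$: since $\chi_4(\omega^N)=i$ and $N\equiv 3\pmod 4$ force $\chi_4(\omega)=-i$, we get $\chi_4(w_1)=(-i)^{q-1}=1$ because $4\mid q-1$ when $q=3^{2e}$. I would apply Fourier inversion $\chi_4^j(z)=G_E(\chi_4^{-j})^{-1}\sum_{y\in E^{\ast}}\chi_4^{-j}(y)\psi_E(yz)$ inside the definition of $H_{\gamma,j}$ and collapse the sum over $x\in F$ via $\sum_x\psi_F(x\,\Tr(y\gamma))=q\,\mathbf{1}_L(y)$, obtaining
\[
H_{\gamma,j}=\frac{q}{G_E(\chi_4^{-j})}\sum_{y\in L^{\ast}}\chi_4^{-j}(y)\,\psi_F(\Tr(y)).
\]
Partitioning $L^{\ast}=\bigsqcup_{i\in\mathcal I}F^{\ast}w_1^i$, pulling out $\chi_4^{-j}(w_1^i)=1$, and using the standard identity $\sum_{\lambda\in F^{\ast}}\chi_4^{-j}(\lambda)\psi_F(\lambda t)=\chi_4^{j}(t)\,G_F(\chi_4^{-j})$, this becomes $H_{\gamma,j}=\bigl(q\,G_F(\chi_4^{-j})/G_E(\chi_4^{-j})\bigr)S_j$.

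The proof concludes by verifying that this Gauss-sum prefactor equals $1$. Since $\chi_4$ is trivial on $\ker N_{E/F}=\langle w_1\rangle$, it factors as $\tilde\chi\circ N_{E/F}$ for some quartic character $\tilde\chi$ of $F^{\ast}$, and the Davenport--Hasse lifting relation then gives $G_E(\chi_4^{-j})=G_F(\tilde\chi^{-j})^3=G_F(\chi_4^{j}|_F)^3$. Combined with the pure Gauss-sum identity $G_F(\chi_4|_F)^4=q^2$ --- which holds because the quartic character $\chi_4|_F$ is semi-primitive in characteristic $3$ (as $3\equiv-1\pmod 4$ gives $G_F(\chi_4|_F)=\pm q^{1/2}$) --- the prefactor collapses to $1$, yielding $S_j=H_{\gamma,j}$. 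This pure Gauss-sum cancellation is the principal obstacle; it is precisely where the hypothesis $q=3^{2e}$ enters essentially, through the semi-primitivity of the quartic character in characteristic $3$.
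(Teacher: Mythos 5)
Your proof is correct and follows essentially the same route as the paper: both arguments are orthogonality/Gauss-sum manipulations whose crux is the identity $G_E(\chi_4^{-j})\,G_F(\chi_4^{j}|_F)=q^2$ (equivalently, since $-1$ is a fourth power in $F$, $G_E(\chi_4^{-j})=q\,G_F(\chi_4^{-j}|_F)$), which the paper cites from \cite[Theorem 11.6.3]{BEW97} and you rederive via the Davenport--Hasse lifting relation together with semi-primitivity of the quartic character in characteristic $3$. The only organizational difference is that you first reduce the count to the projective-point sums $S_j=\sum_{i\in\mathcal I}\chi_4^j(\Tr(w_1^i))$ and then match these with $H_{\gamma,j}$ by Fourier inversion, whereas the paper expands the count directly into the sums $H_{\gamma,j}$.
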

\begin{proof}
The size of the set $\{i\in \Z_N:\,\Tr(w_1^i)\in C_0^{(4,q)}\cup C_1^{(4,q)},\Tr(\gamma w_1^i)=0\}$ is given by
\begin{align}
&\frac{1}{q^2}\sum_{e,f\in F}\sum_{y\in C_0^{(q-1,q^3)}}
\sum_{x\in C_0^{(4,q)}\cup C_1^{(4,q)}}\psi_E(ey)\psi_F(-ex)\psi_E(f\gamma y)
\nonumber\\
=&\frac{1}{q^2}\sum_{e\in F^\ast}\sum_{\lambda\in F}
\sum_{y\in C_0^{(q-1,q^3)}}\sum_{x\in C_0^{(4,q)}\cup C_1^{(4,q)}}
\psi_E(ye(1+\lambda\gamma))\psi_F(-ex)\label{eq:twoint12}\\
&\, \, \, \, \hspace{0.3cm} +\frac{1}{q^2}\sum_{f\in F}\sum_{y\in C_0^{(q-1,q^3)}}
\sum_{x\in C_0^{(4,q)}\cup C_1^{(4,q)}}\psi_E(yf\gamma)\label{eq:twoint2}.
\end{align}
The  sum in (\ref{eq:twoint2}) is equal to
\[
S_2:=\frac{1}{q^2}\sum_{z\in E^\ast}
\sum_{x\in C_0^{(4,q)}\cup C_1^{(4,q)}}
\psi_E(z\gamma)+\frac{N(q-1)}{2q^2}=\frac{q^2-1}{2q}.
\]
Let $\chi_{q-1}$ be a multiplicative character of order $q-1$ of $E$ and
${\chi'}_{q-1}$ be its restriction to $F$. Write $\chi_4=\chi_{q-1}^{\frac{q-1}{4}}$ and
${\chi'}_4={\chi'}_{q-1}^{\frac{q-1}{4}}$, which are multiplicative characters of order $4$ of $E$ and $F$ respectively.
Here, we can assume that $\chi_4(\omega^N)=i$.
Then, by orthogonality of characters,  the sum (\ref{eq:twoint12}), denoted by $S_1$, is computed as follows.
\begin{align}
S_1=&\frac{1}{4q^2(q-1)}\sum_{i=0}^{q-2}\sum_{j=0}^{3}
G(\chi_{q-1}^{-i})G({\chi'}_4^j)
\sum_{a\in F^\ast}\sum_{\lambda\in F}\sum_{h=0,1}
\chi_{q-1}^{i}(a(1+\lambda \gamma)){\chi'}_4^{-j}(-aw_0^h)\nonumber\\
=&\frac{1}{4q^2(q-1)}\sum_{i=0}^{q-2}\sum_{j=0}^{3}
G(\chi_{q-1}^{-i})G({\chi'}_4^j)
\sum_{\lambda\in F}\sum_{h=0,1}
\chi_{q-1}^{i}(1+\lambda \gamma){\chi'}_4^{-j}(-w_0^{h})
\sum_{a\in F^\ast}{\chi'}_{q-1}^{i-\frac{q-1}{4}j}(a). \label{eq:twoint3}
\end{align}
Here $\sum_{a\in F^\ast}{\chi'}_{q-1}^{i-\frac{q-1}{4}j}(a)=q-1$ or $0$ according as
$i\equiv \frac{q-1}{4}j\,(\mod{q-1})$ or not. Hence we have
\[
S_1=\frac{1}{4q^2}\sum_{j=0}^{3}
G(\chi_{4}^{-j})G({\chi'}_4^j)
\sum_{\lambda\in F}\sum_{h=0,1}
\chi_{4}^{j}(1+\lambda \gamma){\chi'}_4^{-j}(-w_0^{h}).
\]
Noting that  $G(\chi_{4}^{-j})G({\chi'}_4^j)=q^2$ for $j=1,2,3$, c.f. \cite[Theorem 11.6.3]{BEW97}, and ${\chi'}_4(-1)=1$, we can rewrite the above as
\begin{align*}
&\frac{1}{4}\sum_{j=0}^{3}
\sum_{\lambda\in F}\sum_{h=0,1}
\chi_{4}^{j}(1+\lambda \gamma){\chi'}_4^{-j}(w_0^{h})+\frac{1-q^2}{2q}.
\end{align*}
Therefore, the size of the set $\{i\in \Z_N:\,\Tr(w_1^i)\in C_0^{(4,q)}\cup C_1^{(4,q)},\Tr(\gamma w_1^i)=0\}$ is equal to
\begin{align*}
S_1+S_2&=\frac{1}{4}\sum_{j=0}^{3}\sum_{\lambda\in F}\sum_{h=0,1}
\chi_{4}^{j}(1+\lambda \gamma){\chi}_4^{-j}(\omega^{Nh})
\\
&=\,\frac{1}{4}\sum_{\lambda\in F}\sum_{h=0,1}1+
\frac{1}{4}\Big(\sum_{h=0,1}\chi_4^{2}(\omega^{Nh})\Big)\sum_{\lambda\in F}\chi_4^2(1+\lambda \gamma)\nonumber\\
&\, \, \, +
\frac{1}{4}\Big(\sum_{h=0,1}\chi_4^{3}(\omega^{Nh})\Big)\sum_{\lambda\in F}\chi_4(1+\lambda \gamma)+\frac{1}{4}\Big(\sum_{h=0,1}\chi_4(\omega^{Nh})\Big)\sum_{\lambda\in F}\chi_4^3(1+\lambda \gamma)\nonumber\\
&=\,\frac{q}{2}+aH_{\gamma,1}+cH_{\gamma,3}. 
\end{align*}
This completes the proof of the lemma. \qed
\end{proof}
\vspace{0.3cm}

Now, the exponential sum $H_{\gamma,j}$ can be transformed to an exponential sum over $F$ as follows:
Since 
$\chi_4(z)=\chi_4^{3}(\Norm(z))$ for $z\in E$, where $\Norm$ is the norm from $E$ to $F$, we have
\begin{align}
H_{\gamma,j}=&\chi_4^{j}(\gamma)\sum_{x \in F}\chi_4^j(\gamma^{-1}+x)
\nonumber\\
=&\chi_4^{j}(\gamma)\sum_{x \in F}\chi_4^{-j}(\Norm(\gamma^{-1}+x))\nonumber\\
=&\chi_4^{j}(\gamma)\sum_{x \in F}\chi_4^{-j}(x^3+\Tr(\gamma^{-1})x^2+\Tr(\gamma^{-1-q})x+\Norm(\gamma^{-1})).
\label{eq:cube}
\end{align}
Write
\[
f_\gamma(x):=x^3+\Tr(\gamma^{-1})x^2+\Tr(\gamma^{-1-q})x+\Norm(\gamma^{-1}).
\]
The rest of this section is devoted to showing that the exponential sum $
\sum_{x \in F}\chi_4^{-j}(f_\gamma(x))$
has modulus  $3^e$.
Note that if $|\sum_{x \in F}\chi_4^{-j}(f_\gamma(x))|=3^e$, then $\sum_{x \in F}\chi_4^{-j}(f_\gamma(x))=\alpha 3^{e}$, for a fourth root of unity $\alpha$,  since $\Z[i]$ is a unique factorization domain and $3$ is a prime in $\Z[i]$;  it  follows from Lemma~\ref{lem:reduce} that the intersection sizes of $\cK$ with the lines of $\AG(2,3^{2e})$ are $\frac{3^{2e}+ 3^{e}}{2}$ or $\frac{3^{2e}-3^{e}}{2}$.

\vspace{0.2cm}
Therefore, we are interested in the modulus of the exponential sum $\sum_{x \in F}\chi_4^{-j}(f_\gamma(x))$.  The well known theorem of Weil on multiplicative character sums implies that
$|\sum_{x\in F}
\chi_4^{-j}(f_{\gamma}(x))|\le 2\cdot 3^{e}$, c.f.~\cite[Theorem 5.41]{LN97}, which is useful but not enough.

\begin{lemma}\label{lem:affinesum}
For $j=1$ or $3$, it holds that
\begin{equation}\label{eq:HH}
\sum_{x \in F}\chi_4^{-j}(f_\gamma(x))=
\frac{-1}{3^{e}}\sum_{a\in F^\ast}\chi_4^{j}(a)
\sum_{x\in F}\psi(a\Tr(\gamma^{-1})x^2+(a\Tr(\gamma^{-1-q})+a^{1/3})x+a\Norm(\gamma^{-1}))).
\end{equation}
\end{lemma}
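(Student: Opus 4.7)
The overall plan is to Fourier-invert the multiplicative character $\chi_{4}^{-j}|_{F}$ to express the left-hand side as an additive character sum, then to collapse the resulting cubic $ax^{3}$ using the characteristic-$3$ identity $\psi(ax^{3})=\psi(a^{1/3}x)$. The scalar that drops out is a quartic Gauss sum over $F=\F_{3^{2e}}$, which I will show equals $-3^{e}$.

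Step 1 (Fourier inversion). I would first note that $f_{\gamma}(x)\in F$, so $\chi:=\chi_{4}^{-j}|_{F}$ is a nontrivial character of $F^{*}$ of order $4$ (for $j\in\{1,3\}$, since $4\mid q-1$). The standard Gauss-sum inversion identity
\[
\chi(y)=\frac{1}{G(\bar\chi)}\sum_{a\in F^{*}}\bar\chi(a)\psi(ay)\qquad(y\in F)
\]
holds for all $y\in F$, with both sides vanishing at $y=0$ by nontriviality of $\bar\chi$. Substituting $y=f_{\gamma}(x)$, summing over $x\in F$, and swapping the order of summation yields
\[
\sum_{x\in F}\chi_{4}^{-j}(f_{\gamma}(x))=\frac{1}{G(\chi_{4}^{j}|_{F})}\sum_{a\in F^{*}}\chi_{4}^{j}(a)\sum_{x\in F}\psi(af_{\gamma}(x)).
\]

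Step 2 (Characteristic-$3$ collapse of the cubic). In characteristic $3$ we have $(a^{1/3}x)^{3}=ax^{3}$, and $\Tr_{F/\F_{3}}$ is Frobenius-invariant since it sums Galois conjugates over $\F_{3}$; hence $\Tr_{F/\F_{3}}(ax^{3})=\Tr_{F/\F_{3}}(a^{1/3}x)$ and therefore $\psi(ax^{3})=\psi(a^{1/3}x)$. Applying this to the cubic term in $af_{\gamma}(x)$ produces exactly the affine-quadratic exponent $a\Tr(\gamma^{-1})x^{2}+(a\Tr(\gamma^{-1-q})+a^{1/3})x+a\Norm(\gamma^{-1})$ appearing on the right-hand side of the claim. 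What remains is to show $G(\chi_{4}^{j}|_{F})=-3^{e}$ for $j\in\{1,3\}$.

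Step 3 (Evaluation of the quartic Gauss sum). Since $4\mid 9-1$, each character $\chi_{4}^{j}|_{F}$ factors through the norm as $\chi_{0}^{j}\circ N_{F/\F_{9}}$, where $\chi_{0}$ is a quartic character of $\F_{9}^{*}$. The Hasse--Davenport lifting relation then gives
\[
G_{F}(\chi_{0}^{j}\circ N_{F/\F_{9}})=(-1)^{e-1}\,G_{\F_{9}}(\chi_{0}^{j})^{e}.
\]
A short direct computation over $\F_{9}$ (consistent with the classical quartic-Gauss-sum evaluation via $9=0^{2}+3^{2}$ and $\chi_{0}(-1)=1$) gives $G_{\F_{9}}(\chi_{0})=G_{\F_{9}}(\chi_{0}^{3})=-3$. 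Hence $G_{F}(\chi_{4}^{j}|_{F})=(-1)^{e-1}(-3)^{e}=-3^{e}$, producing the factor $-1/3^{e}$ in the lemma.

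The principal obstacle is step 3: the absolute value $|G(\chi_{4}^{j}|_{F})|=3^{e}$ is immediate, but the sign is not. Neither the Hasse--Davenport product formula (Theorem~\ref{thm:Stickel2}) nor Stickelberger's theorem (Theorem~\ref{stick}) by itself pins down the sign of this quartic Gauss sum, so one has to either compute the base quartic Gauss sum in $\F_{9}$ explicitly or lift from that computation. Steps 1 and 2 are otherwise routine Fourier manipulations once the characteristic-$3$ identity $\psi(ax^{3})=\psi(a^{1/3}x)$ is recognised.
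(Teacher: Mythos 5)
Your proposal is correct and follows essentially the same route as the paper: Gauss-sum (Fourier) inversion of $\chi_4^{-j}$ over $F$, followed by the characteristic-$3$ collapse $\psi(ax^3)=\psi(a^{1/3}x)$ of the cubic term. The only difference is the justification of the value $-3^e$: the paper simply cites the semiprimitive Gauss sum evaluation \cite[Theorem~11.6.3]{BEW97} for $G(\chi_4^{-j})$ (applicable since $4\mid 3+1$), whereas you re-derive it via Hasse--Davenport lifting from an explicit computation over $\F_{9}$; both yield the same constant.
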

\proof
By orthogonality of characters, 
\begin{equation}\label{eq:quadtrans}
\sum_{x \in F}\chi_4^{-j}(f_\gamma(x))=\frac{G(\chi_4^{-j})}{3^{2e}}\sum_{a\in F^\ast}\chi_4^{j}(a)
\sum_{x\in F}\psi(af_{\gamma}(x)).
\end{equation}
Noting that $G(\chi_4^{-j})=-3^e$, c.f.~\cite[Theorem~11.6.3]{BEW97},
we have
\begin{align*}
\sum_{x \in F}\chi_4^{-j}(f_\gamma(x))=&\frac{-1}{3^{e}}\sum_{a\in F^\ast}\chi_4^{j}(a)
\sum_{x\in F}\psi(a(x^3+\Tr(\gamma^{-1})x^2+\Tr(\gamma^{-1-q})x+\Norm(\gamma^{-1})))\\
=&\frac{-1}{3^{e}}\sum_{a\in F^\ast}\chi_4^{j}(a)
\sum_{x\in F}\psi(a\Tr(\gamma^{-1})x^2+(a\Tr(\gamma^{-1-q})+a^{1/3})x+a\Norm(\gamma^{-1}))).
\end{align*}
The proof of the lemma is complete.
\qed

\vspace{0.3cm}
First we consider the case where $\Tr(\gamma^{-1})=0$ in Lemma~\ref{lem:affinesum}.
\begin{lemma}\label{lem_sqtr}
Let $\gamma\in E\setminus F$ be such that $\Tr(\gamma)=0$. Then $\Tr(\gamma^{1+q})$ is a nonzero square of $F$.
\end{lemma}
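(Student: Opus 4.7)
The plan is to study the minimal polynomial of $\gamma$ over $F$ and read off $\Tr(\gamma^{1+q})$ from its discriminant, then appeal to two arithmetic features of $q=3^{2e}$. Since $\gamma\in E\setminus F$ and $E/F$ is cyclic of degree $3$ with Galois group generated by the Frobenius $\sigma:x\mapsto x^q$, the minimal polynomial of $\gamma$ is
\[
f(x)=(x-\gamma)(x-\gamma^q)(x-\gamma^{q^2})=x^3-\Tr(\gamma)\,x^2+\Tr(\gamma^{1+q})\,x-\Norm(\gamma).
\]
The hypothesis $\Tr(\gamma)=0$ reduces this to $f(x)=x^3+s_2x-s_3$ with $s_2:=\Tr(\gamma^{1+q})$ and $s_3:=\Norm(\gamma)$.

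I would first dispose of the ``nonzero'' claim. If $s_2=0$, then $\gamma^3=s_3\in F$, so $\gamma^{3(q-1)}=1$ and the order of $\gamma$ in $E^*$ divides $\gcd(3(q-1),q^3-1)$. Because $q=3^{2e}$ we have $q^2+q+1\equiv 1\pmod 3$, so $\gcd(3,q^2+q+1)=1$ and the above gcd collapses to $q-1$; but then $\gamma\in F$, contradicting $\gamma\in E\setminus F$. So $s_2\neq 0$.

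Next I would show $s_2$ is a square. The crux is that the discriminant of $f$ lies in $(F^*)^2$: the element $\Delta=(\gamma-\gamma^q)(\gamma-\gamma^{q^2})(\gamma^q-\gamma^{q^2})$ is visibly fixed by $\sigma$ (the two sign changes from the permuted factors cancel), hence $\Delta\in F$, so $\mathrm{disc}(f)=\Delta^2$ is a square in $F$. On the other hand, the standard formula $\mathrm{disc}(x^3+px+r)=-4p^3-27r^2$ collapses in characteristic $3$ to $-p^3$, yielding $-s_2^3\in(F^*)^2$. I would now use the arithmetic of $q=3^{2e}$: since $\gcd(3,q-1)=1$, the cubing map is a bijection on $F^*$ commuting with the quadratic character, so $-s_2^3\in(F^*)^2$ iff $-s_2\in(F^*)^2$; and since $q=9^e\equiv 1\pmod 4$, the element $-1$ is a square in $F$, so this in turn is equivalent to $s_2\in(F^*)^2$.

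I do not foresee a serious obstacle; the only slightly delicate step is verifying the Galois-invariance of $\Delta$, and the proof works precisely because the assumption that $q$ is an even power of $3$ simultaneously kills the $-27r^2$ term of the discriminant, makes cubing a bijection on $F^*$, and makes $-1$ a square in $F$.
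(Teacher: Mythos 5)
Your proof is correct, and it takes a genuinely different route from the paper's. The paper invokes Hilbert's Theorem 90 to write $\gamma=y-y^q$ and computes directly that $\Tr(\gamma^{1+q})=3\Tr(y^{1+q})-\Tr(y)^2=-\Tr(y)^2$ in characteristic $3$, which exhibits the quantity explicitly as $-1$ times a square; the non-vanishing is then obtained geometrically, by noting that $\la \gamma\ra$ lies on the tangent line at $\la 1\ra$ to the conic $\{\la x\ra:\Tr(x^{1+q})=0\}$ without being the point of tangency. You instead read $s_2=\Tr(\gamma^{1+q})$ off the minimal polynomial $x^3+s_2x-s_3$ and equate the two expressions for its discriminant, namely $\Delta^2$ with $\Delta\in F^*$ on one hand and $-4s_2^3-27s_3^2=-s_2^3$ on the other, then descend from $-s_2^3$ to $s_2$ using that cubing is a square-preserving bijection of $F^*$ and that $-1\in(F^*)^2$. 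Both arguments hinge on exactly the same arithmetic of $q=3^{2e}$ (characteristic $3$ and $q\equiv 1\pmod 4$), and they are consistent: with $\gamma=y-y^q$ one has $\gamma-\gamma^q=\Tr(y)$ in characteristic $3$, so $\Delta=-\Tr(y)^3$ and $s_2^3=-\Delta^2=(-\Tr(y)^2)^3$. What your version buys is a more self-contained treatment of the non-vanishing: $s_2=0$ would force $\gamma^3\in F$ and hence $\gamma\in F$ because $\gcd(3,q^3-1)=1$, avoiding the paper's appeal to tangent lines of a conic. What the paper's version buys is the explicit closed form $\Tr(\gamma^{1+q})=-\Tr(y)^2$.
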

\begin{proof}
By Hilbert's Theorem 90~\cite[Theorem 2.24]{LN97}, there exists $y\in E$ such that $\gamma=y-y^q$. We can directly compute that $\Tr(\gamma^{1+q})=\Tr(y^{1+q})-\Tr(y^2)$ and
$\Tr(y)^2=\Tr(y^2)+2\Tr(y^{1+q})$. It follows that $\Tr(\gamma^{1+q})=-\Tr(y)^2$. Since $q\equiv 1\pmod{4}$, $-1$ is a square, and hence $\Tr(\gamma^{1+q})$ is a square of $F$.

Next we show that $\Tr(\gamma^{1+q})\ne 0$. The conic $\{\la x\ra:\,\Tr(x^{1+q})=0\}$ in $\PG(2,q)$ contains the point $\la 1\ra$, and the tangent line through $\la 1\ra$ is $\{\la x\ra:\,\Tr(x)=0\}$. Since $\Tr(\gamma)=0$, the point $\la \gamma\ra$ lies on this tangent line, and thus $\Tr(\gamma^{1+q})\ne 0$.\qed
\end{proof}

\begin{proposition}\label{prop:zero}
Let $\gamma$ be an element of $E\setminus F$ such that
$\Tr(\gamma^{-1})= 0$. Then, for $j=1$ or $3$,  we have \[
\Big|\sum_{x \in F}\chi_4^{-j}(f_\gamma(x))\Big|=3^e.
\]
\end{proposition}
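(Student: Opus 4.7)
The plan is to start from Lemma~\ref{lem:affinesum}, specialize it to $\Tr(\gamma^{-1})=0$, and then reduce the claim $|S|=3^e$ (with $S:=\sum_{x\in F}\chi_4^{-j}(f_\gamma(x))$) to an Artin--Schreier observation in characteristic $3$.

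First I would substitute $\Tr(\gamma^{-1})=0$ into Lemma~\ref{lem:affinesum}: the quadratic term drops, so the inner sum $\sum_x\psi\bigl((a\beta+a^{1/3})x+a\delta\bigr)$ equals $q\,\psi(a\delta)$ when $a\beta+a^{1/3}=0$ and vanishes otherwise, where $\beta:=\Tr(\gamma^{-1-q})$ and $\delta:=\Norm(\gamma^{-1})$. Cubing $a\beta=-a^{1/3}$ yields $a^2=-\beta^{-3}$; by Lemma~\ref{lem_sqtr}, $\beta$ is a nonzero square in $F$, and $-1$ is a square (as $q\equiv 1\pmod 4$), so this equation has exactly two solutions $\pm a_0\in F^*$. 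Combined with $\chi_4^j(-1)=1$, which follows from $(q-1)/2\equiv 0\pmod 4$ when $q=3^{2e}$, the formula collapses to
\[
S=-3^e\chi_4^j(a_0)\bigl[\psi(a_0\delta)+\psi(-a_0\delta)\bigr].
\]
Since $a_0\delta\in F$ and $\psi(y)+\psi(-y)$ equals $2$ or $-1$ according as $\Tr_{F/\F_3}(y)$ is $0$ or not, the whole problem reduces to proving $\Tr_{F/\F_3}(a_0\delta)\neq 0$.

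To extract this I would exploit the minimal polynomial $\alpha^3+\beta\alpha=\delta$ of $\alpha:=\gamma^{-1}$. Writing $\beta=\mu^2$ with $\mu\in F^*$ and fixing $\nu\in F^*$ with $\nu^2=-1$, one has $a_0=\pm\nu\mu^{-3}$ and
\[
\delta=\alpha(\alpha^2+\mu^2)=\alpha(\alpha-\nu\mu)(\alpha+\nu\mu).
\]
Dividing by $\mu^3$ and setting $\rho:=\alpha/\mu$ gives $\delta/\mu^3=\rho(\rho-\nu)(\rho+\nu)=\rho^3+\rho$; then with $\tau:=\nu\rho$ and $\nu^3=-\nu$ in characteristic $3$, one obtains $\nu\delta/\mu^3=-(\tau^3-\tau)$, hence $a_0\delta=\mp(\tau^3-\tau)$.

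The crucial and hardest step is the Artin--Schreier observation that $\tau^3-\tau$ cannot belong to $\{z^3-z:z\in F\}$. Indeed, $\tau=(\nu/\mu)\alpha$ lies in $E\setminus F$ (because $\alpha\notin F$), yet $\tau^3-\tau\in F$; if $\tau^3-\tau=z^3-z$ for some $z\in F$, then in characteristic $3$ we would get $(\tau-z)^3=\tau-z$, forcing $\tau-z\in\F_3\subset F$ and so $\tau\in F$, a contradiction. Since $\{z^3-z:z\in F\}$ is exactly $\ker\Tr_{F/\F_3}$, this shows $\Tr_{F/\F_3}(a_0\delta)\neq 0$, whence $|\psi(a_0\delta)+\psi(-a_0\delta)|=1$ and $|S|=3^e$ as required. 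The only real difficulty is spotting the substitution $\rho=\alpha/\mu$ that converts $\delta/\mu^3$ into the Artin--Schreier-shaped expression $\rho^3+\rho$; after that the trace computation closes itself.
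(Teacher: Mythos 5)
Your proof is correct and follows essentially the same route as the paper's: reduce via Lemma~\ref{lem:affinesum}, note that $a\Tr(\gamma^{-1-q})+a^{1/3}=0$ has exactly two nonzero roots $\pm a_0$ (via Lemma~\ref{lem_sqtr} and $-1$ being a fourth power), collapse the sum to $-3^e\chi_4^j(a_0)\bigl(\psi(a_0\delta)+\psi(-a_0\delta)\bigr)$, and finish by showing the relevant $\F_3$-trace is nonzero. The only difference is cosmetic and lies in that last step: the paper deduces $\Tr_{q/3}(t^{-3}\Norm(\gamma^{-1}))\neq 0$ from the fact that $f_\gamma(x)=\Norm(\gamma^{-1}+x)$ never vanishes on $F$ (so the image of $f_\gamma$, a coset of $t^3\ker\Tr_{q/3}$, misses $0$), while you write $a_0\delta=\mp(\tau^3-\tau)$ with $\tau\in E\setminus F$ directly from the characteristic polynomial of $\gamma^{-1}$ and invoke Artin--Schreier --- equivalent arguments, both resting on $\gamma^{-1}\notin F$.
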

\proof
By Lemma~\ref{lem:affinesum}, we need to show that
\[
\Big|\sum_{a\in F^\ast}\chi_4^{j}(a)
\sum_{x\in F}\psi((a\Tr(\gamma^{-1-q})+a^{1/3})x+a\Norm(\gamma^{-1})))\Big|=3^{2e}.
\]
Since  $\Tr(\gamma^{-1-q})$ is a nonzero square
of $F$ by Lemma~\ref{lem_sqtr} and $-1$ is a square in $F$ (in fact, $-1$ is a fourth power in $F$), there is an element $t\in F$ such that $t^2=-\Tr(\gamma^{-1-q})$. Then
$a\Tr(\gamma^{-1-q})+a^{1/3}=0$ if and only if  $a=0,\pm t^{-3}$. Hence
\begin{align*}
&\Big|\sum_{a\in F^\ast}\chi_4^{j}(a)
\sum_{x\in F}\psi((a\Tr(\gamma^{-1-q})+a^{1/3})x+a\Norm(\gamma^{-1})))\Big|\\
=&\,
3^{2e}|\psi(t^{-3}\Norm(\gamma^{-1}))+\psi(-t^{-3}\Norm(\gamma^{-1}))|.
\end{align*}
Noting that
\begin{equation}\label{eq:cubeset}
\Big\{t^3\Big(\frac{x^3}{t^3}-\frac{x}{t}+\frac{\Norm(\gamma^{-1})}{t^3}\Big):\,x\in F\Big\}=\Big\{y:\,\Tr_{q/3}\Big(\frac{y}{t^3}\Big)=\Tr_{q/3}\Big(\frac{\Norm(\gamma^{-1})}{t^3}\Big)\Big\},
\end{equation}
we have
$\Tr_{q/3}(t^{-3}\Norm(\gamma^{-1}))\not=0$; otherwise
the set (\ref{eq:cubeset}) contains zero, and $f_{\gamma}(x)=0$ for some $x\in F$, which is impossible since $f_\gamma(x)=\Norm(\gamma^{-1}+x)$.
Hence,  we have $\psi(t^{-3}\Norm(\gamma^{-1}))+\psi(-t^{-3}\Norm(\gamma^{-1}))=-1$, which completes the proof.  
\qed
\vspace{0.3cm}
Next, we consider the case where $\Tr(\gamma^{-1})\not=0$ in Lemma~\ref{lem:affinesum}.
\begin{lemma}\label{lem:transGK}
Let $\gamma$ be an element of $E\setminus F$ such that
$\Tr(\gamma^{-1})\not= 0$. Then, for $j=1$ or $3$, there exists an element $z\in F$ such that
\begin{equation}\label{eq:Kloo}
\Big|\sum_{x \in F}\chi_4^{-j}(f_\gamma(x))\Big|=\Big|\sum_{a\in F^\ast}\chi_4^{j}(a)\psi(za+a^{-1})\Big|.
\end{equation}
\end{lemma}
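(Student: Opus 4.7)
The plan is to start from Lemma~\ref{lem:affinesum} and turn the inner sum
over $x\in F$ into a Gauss sum via completion of the square, then convert the
fractional-power terms $a^{1/3}$ and $a^{-1/3}$ coming from the cube in
$f_\gamma$ into honest powers of $a$ using the characteristic-$3$ identity
$\psi(y^3)=\psi(y)$, and finally rescale $a$ to normalise the coefficient of
$a^{-1}$.

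Set $\alpha=\Tr(\gamma^{-1})$, $\beta=\Tr(\gamma^{-1-q})$, $\nu=\Norm(\gamma^{-1})$.
First I would write the inner $x$-sum in Lemma~\ref{lem:affinesum} as
$\sum_{x\in F}\psi(Ax^2+Bx+C)$ with $A=a\alpha$, $B=a\beta+a^{1/3}$,
$C=a\nu$. Since $\alpha\ne 0$ and $2$ is invertible in $F$ (characteristic
$3$), completing the square gives
\[
\sum_{x\in F}\psi(Ax^2+Bx+C)=\eta(a\alpha)\,G(\eta)\,\psi\!\left(C-\frac{B^2}{4A}\right),
\]
where $\eta$ is the quadratic character of $F$. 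A direct expansion yields
\[
C-\frac{B^2}{4A}=a\!\left(\nu-\frac{\beta^2}{4\alpha}\right)-\frac{a^{1/3}\beta}{2\alpha}-\frac{a^{-1/3}}{4\alpha}.
\]

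Next I would exploit the fact that, in characteristic $3$, the canonical
additive character of $F$ satisfies $\psi(y^3)=\psi(y)$ for every $y\in F$
(Frobenius fixes the prime field, hence commutes with the absolute trace).
Applying this identity to the two offending terms and simplifying the
constants modulo $3$ (so $(2\alpha)^3=-\alpha^3$ and $(4\alpha)^3=\alpha^3$)
transforms the argument of $\psi$ into a Laurent polynomial in $a$:
\[
\psi\!\left(C-\frac{B^2}{4A}\right)=\psi\!\left(a\Bigl(\delta+\frac{\beta^3}{\alpha^3}\Bigr)-\frac{a^{-1}}{\alpha^3}\right),\qquad \delta:=\nu-\frac{\beta^2}{4\alpha}.
\]
Plugging this into (\ref{eq:HH}) and absorbing $\eta(a\alpha)$ as
$\eta(\alpha)\chi_4^{2}(a)$, the sum over $a$ takes the shape
\[
\sum_{a\in F^{\ast}}\chi_4^{\,j+2}(a)\,\psi\bigl(a A'+a^{-1}\mu\bigr),
\]
with $A'=\delta+\beta^3/\alpha^3$ and $\mu=-\alpha^{-3}$.

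Finally I would substitute $a=\mu\,a'$ so that the coefficient of $(a')^{-1}$
becomes $1$; the coefficient of $a'$ then equals $z:=\mu A'$, an explicit
element of $F$. The prefactors $-G(\eta)/3^{e}$, $\eta(\alpha)$ and
$\chi_4^{j+2}(\mu)$ all have modulus~$1$ (note $|G(\eta)|=3^{e}$ so the
quotient has absolute value~$1$), and replacing the twisting character
$\chi_4^{j+2}$ by $\chi_4^{-j}$ changes neither the modulus of the sum nor,
after complex conjugation, its absolute value. Taking absolute values yields
\eqref{eq:Kloo}.

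The step requiring the most care is the passage from $a^{1/3},\,a^{-1/3}$ to
$a,\,a^{-1}$; this is where the hypothesis $q=3^{2e}$ (in particular,
characteristic~$3$) is genuinely used, via $\psi(y^3)=\psi(y)$. Once this
identity is in hand the rest is a routine Gauss-sum completion followed by a
scaling change of variable in the resulting Kloosterman-type sum.
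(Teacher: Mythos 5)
Your argument is correct and follows essentially the same route as the paper's proof: both complete the square in the inner sum via the standard quadratic character-sum evaluation, use the Frobenius-invariance $\psi(y^3)=\psi(y)$ of the additive character in characteristic $3$ to convert the $a^{\pm 1/3}$ terms into $a^{\pm 1}$ terms, and rescale $a$ to normalise the coefficient of $a^{-1}$. The only cosmetic difference is that the paper first substitutes $a\mapsto a^3$ and then extracts cube roots inside $\psi$, landing directly on the twist $\chi_4^{j}$, whereas you cube term by term and end with $\chi_4^{j+2}=\chi_4^{-j}$; your closing conjugation/sign-flip observation correctly bridges that gap.
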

\proof
If $a\Tr(\gamma^{-1})\not=0$, by \cite[Theorem~5.33]{LN97}, we have
\begin{align*}
& \sum_{x\in F}\psi(a\Tr(\gamma^{-1})x^2+(a\Tr(\gamma^{-1-q})+a^{1/3})x+a\Norm(\gamma^{-1})))\\
&=G(\eta')\psi(a\Norm(\gamma^{-1})-(a\Tr(\gamma^{-1-q})+a^{1/3})^2a^{-1}\Tr(\gamma^{-1})^{-1})\eta'(a\Tr(\gamma^{-1})),
\end{align*}
where $\eta'$ is the quadratic character of $F$.
Write $a_0=\Norm(\gamma^{-1})^{1/3}$, $a_1=\Tr(\gamma^{-1-q})$, and
$a_2=\Tr(\gamma^{-1})^{-1}$.
Then, by Lemma~\ref{lem:affinesum} and $G(\eta')=\pm 3^e$ (c.f. \cite[Theorem~5.15]{LN97}), 
we obtain
\begin{align*}
& \Big|\sum_{x \in F}\chi_4^{-j}(f_\gamma(x))\Big|\\
=&\,\Big|\sum_{a\in F^\ast}\chi_4^{j}(a)
\psi(a\Norm(\gamma^{-1})-(a\Tr(\gamma^{-1-q})+a^{1/3})^2a^{-1}\Tr(\gamma^{-1})^{-1})\eta'(a\Tr(\gamma^{-1}))\Big|\\
=&\,\Big|\sum_{a\in F^\ast}\chi_4^{3j}\eta'(a)
\psi(a^3\Norm(\gamma^{-1})-(a^3a_1+a)^2a^{-3}a_2)\Big|\\
=&\,\Big|\sum_{a\in F^\ast}\chi_4^{j}(a)
\psi(a_0a-a_2a^{-3}(a^6a_1^2-a^4a_1+a^2))\Big|\\
=&\,\Big|\sum_{a\in F^\ast}\chi_4^{j}(a)
\psi(ya-a_2a^{-1})\Big|=\Big|\sum_{a\in F^\ast}\chi_4^{j}(a)
\psi(za+a^{-1})\Big|
\end{align*}
where $y=a_0-(a_1^2a_2)^{1/3}+a_1a_2$ and $z=-ya_2$.
\qed

\vspace{0.3cm}
The exponential sum $K_{j,z}:=\sum_{a\in F^\ast}\chi_4^{j}(a)\psi(za+a^{-1})$ appearing in the right-hand side of Eqn.~(\ref{eq:Kloo})  is a {\it generalized
Kloosterman sum} \cite[p.~265]{LN97}. It is clear that
$\sum_{x \in F}\chi_4^{-j}(f_\gamma(x))=\alpha K_{j,z}$ for a fourth root
of unity $\alpha$ by the proof above.
If $z=0$ in Eqn.~(\ref{eq:Kloo}),
then the sum  $K_{j,z}$ is just a Gauss sum, and hence $\sum_{x\in F}\chi_4^{-j}(f_\gamma(x))$ has modulus $3^e$.

Now, we assume that $z\not=0$. Again by orthogonality of characters,  the  sum $K_{j,z}$
can be expressed in terms of Gauss sums as follows:
\begin{align*}
K_{j,z}=&\frac{1}{(q-1)^2}\sum_{h,i=0}^{q-1}G(\chi^{-h})G(\chi^{-i})\sum_{a\in F^\ast}
\chi_4^{j}(a)\chi^{h}(za)\chi^{-i}(a),
\end{align*}
where $\chi$ is a multiplicative character of order $q-1$ of $F$. Here, we can assume that $\chi^{\frac{q-1}{4}}=\chi_4$. Since the inner sum
$\sum_{a\in F^\ast}
\chi_4^{j}(a)\chi^{h}(a)\chi^{-i}(a)=q-1$ or $0$ according to $i\equiv h+\frac{q-1}{4}j\,(\mod{q-1})$ or not, we see that
\[
K_{j,z}=\frac{1}{q-1}\sum_{h=0}^{q-1}G(\chi^{-h})G(\chi^{-h-\frac{q-1}{4}j})
\chi^{h}(z).
\]
In the following lemma, we show that $3^e\,|\,G(\chi^{-h})G(\chi^{-h-\frac{q-1}{4}j})$ for every $h=0,1,\ldots,q-2$, which  implies that
$3^e$ divides $K_{j,z}$.
\begin{lemma}\label{lem:GaussDiv}
For any $h=0,1,\ldots,q-2$ and $j=1$ or $3$, we have
\[
3^e\,|\,G(\chi^{-h})G(\chi^{-h-\frac{q-1}{4}j}),
\]
where $\chi$ is a multiplicative character of order $q-1$ of $F$.
\end{lemma}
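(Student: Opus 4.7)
The strategy is to apply Stickelberger's theorem (Theorem~\ref{stick}) to reduce the divisibility statement to a combinatorial inequality on base-$3$ digit sums, and then verify that inequality by a carry analysis in base $9$. Writing $\chi=\omega_{\mathfrak P}$ and letting $s(a)$ denote the base-$3$ digit sum of $a\bmod(q-1)$, Stickelberger gives $\nu_{\cP}(G(\chi^{-a}))=s(a)$. Since $p=3$ ramifies with index $p-1=2$ in $\Z[\xi_p]$ and is unramified in $\Z[\xi_{q-1}]$, we have $\nu_{\cP}(3^e)=2e$, and the lemma reduces to proving
\[
s(h)+s\bigl((h+\tfrac{(q-1)j}{4})\bmod(q-1)\bigr)\ge 2e
\]
for every $h\in\{0,\ldots,q-2\}$ and $j\in\{1,3\}$. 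Since $3^{2e}\equiv 1\pmod{q-1}$, multiplication by $3$ cyclically permutes base-$3$ digits, so $s(3a\bmod(q-1))=s(a)$; together with $3\cdot\tfrac{q-1}{4}\equiv\tfrac{3(q-1)}{4}\pmod{q-1}$, this reduces $j=3$ to $j=1$ via $h\mapsto 3h$. It thus suffices to treat $m:=\tfrac{q-1}{4}$, whose base-$3$ digits are $(2,0,2,0,\ldots,2,0)$ and whose base-$9$ digits are $(2,2,\ldots,2)$ of length $e$.

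Write $h=\sum_{k=0}^{e-1}h_k 9^k$ with $h_k\in\{0,\ldots,8\}$, and let $\sigma(x)$ denote the base-$3$ digit sum of $x\in\{0,\ldots,8\}$, so that $s(h)=\sum_k\sigma(h_k)$. I will carry out the addition $h+m$ in base $9$, tracking carries $C_k$ (with $C_{-1}=0$) and digits $b_k$; the carry rule reads $C_k=1\iff h_k+C_{k-1}\ge 7$. A direct tabulation yields the minimum of $\sigma(h_k)+\sigma(b_k)$ over all admissible $h_k$ for each transition type $(C_{k-1},C_k)$, namely $2,3,1,2$ for $(0,0),(0,1),(1,0),(1,1)$ respectively. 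Denoting by $n_{\alpha\beta}$ the count of transitions of each type, we have $\sum n_{\alpha\beta}=e$ and $n_{01}-n_{10}=C_{e-1}$. In the generic case $C_{e-1}=0$ with $h+m<q-1$, a short calculation shows these weighted minima sum exactly to $2e$, establishing the inequality. The boundary case $h+m=q-1$ forces $h=\tfrac{3(q-1)}{4}$ (whose base-$9$ digits are all $6$), hence $s(h)=2e$ and $h'=0$, and the bound again holds.

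The genuinely delicate case is $C_{e-1}=1$, in which $h'=r+1$ with $r$ defined by $h+m=q+r$. If $k_0$ is the least index with $r_{k_0}\ne 8$, the trailing $8$'s in $r$ force $h_k=6$ with $C_k=0$ for $k<k_0$; these positions contribute $\sigma(h_k)+\sigma(h'_k)=2+0=2$ each, totalling $2k_0$. At position $k_0$, the substitution $r_{k_0}\mapsto r_{k_0}+1$ alters the per-position minimum, while positions $k>k_0$ follow the generic tabulation. Splitting on $C_{k_0}\in\{0,1\}$ and using $n'_{01}-n'_{10}=1$ for the truncated transition sequence at positions $\ge k_0$, one verifies that the contributions from positions $\ge k_0$ sum to at least $2e-2k_0$, so the total still meets the threshold. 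This $C_{e-1}=1$ bookkeeping is the main obstacle: the trailing $8$'s in $r$ collapse the naive per-digit bound, but the rigid structure they impose on $h$ together with the $+1$ correction at position $k_0$ just barely restore the $2e$ inequality in every sub-case. Finally, Galois symmetry (the Galois group of $\Q(\xi_{q-1},\xi_p)/\Q$ acts transitively on primes above $3$ and permutes the pair $\{\tfrac{q-1}{4},\tfrac{3(q-1)}{4}\}$) upgrades the $\cP$-adic divisibility to divisibility by $3^e$ in $\Z[\xi_{q-1},\xi_p]$, completing the proof.
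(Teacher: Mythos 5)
Your proof is correct, and its first half coincides with the paper's: both invoke Stickelberger's theorem to reduce the lemma to the digit-sum inequality $s(h)+s\bigl(h+\tfrac{q-1}{4}j\bigr)\ge 2e$ (you additionally make explicit two points the paper leaves implicit, namely the reduction of $j=3$ to $j=1$ via the digit-permuting map $h\mapsto 3h$, and the Galois argument needed to pass from a valuation bound at one prime $\cP$ to genuine divisibility by $3^e$). Where you diverge is in the proof of the combinatorial inequality itself. The paper works in base $3$ and uses the \emph{modular} add-with-carry algorithm of Hollmann et al., in which the carry sequence is cyclic (indices mod $2e$); summing the carry relation over a full cycle gives the identity $s(h)+s(a)=2\sum h_i-2\sum c_i+2e$, so the whole lemma reduces to $\sum h_i\ge\sum c_i$, which is then proved by the short pairwise bound $h_{2j+1}+h_{2j+2}\ge c_{2j+1}+c_{2j+2}$. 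You instead group digits into base $9$, perform ordinary (non-cyclic) addition of $h$ and $m=\tfrac{q-1}{4}=(2,2,\dots,2)_9$, bound each position by the minimum of $\sigma(h_k)+\sigma(b_k)$ over the four carry-transition types (your tabulated minima $2,3,1,2$ check out, and the telescoping identity $n_{01}-n_{10}=C_{e-1}$ closes the generic case exactly at $2e$), and then handle the wrap-around $h+m\ge q$ by hand: the trailing $8$'s of $r$ rigidify $h$ there, and the $+1$ correction at position $k_0$ costs at most the single unit of slack coming from $n'_{01}-n'_{10}=1$. The trade-off is clear: the paper's cyclic carry sequence absorbs the reduction mod $q-1$ automatically and avoids your delicate $C_{e-1}=1$ bookkeeping, at the price of citing an external algorithmic lemma; your version is self-contained but must confront the wrap-around case explicitly, and you do so correctly.
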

\proof
We will only prove the lemma in the case where $j=1$. The case where $j=3$ is similar. If $h=0$, $G(\chi^{-h})G(\chi^{-h-\frac{q-1}{4}})=G(\chi^{-\frac{q-1}{4}})=-3^{e}$ by \cite[Theorem~11.6.3]{BEW97}. Similarly, if $h=\frac{3(q-1)}{4}$, then $G(\chi^{-h})G(\chi^{-h-\frac{q-1}{4}})=G(\chi^{-\frac{3(q-1)}{4}})=-3^{e}$. Thus, we will assume that $h\not=0$ or $\frac{3(q-1)}{4}$ below.

By Theorem~\ref{stick}, it is enough to show that
\[
s(h)+s(h+\frac{q-1}{4}) \ge 2e
\]
for all $h=1,\ldots, q-2$, where $s(x)$ is the sum of the $3$-adic digits of the reduction of $x$ modulo $q-1$.

Define $a\equiv h+\frac{q-1}{4}\,(\mod{q-1})$.  For any $x\in \Z_{q-1}$, $x\neq 0$, write $x=\sum_{i=0}^{2e-1}x_i3^i=x_{2e-1}x_{2e-2}\cdots x_{0}$ with $x_i\in \{0,1,2\}$, where the subscripts are taken modulo $2e$.
Note that $\frac{q-1}{4}=\sum_{i=0}^{e-1}2\cdot 3^{2i}=0202\cdots 02$. We now use the modular $p$-ary add-with-carry algorithm
described in \cite[Theorem 4.1]{HHKWX}, which says that  there is a unique carry sequence $c=	c_{2e-1}c_{2e-2}\cdots c_0$ with $c_i\in \{0,1\}$  such that
for all $0\le i\le 2e-1$
\begin{equation}\label{eq:modularalgo}
a_i+3c_i=c_{i-1}+h_i+(1+(-1)^i).
\end{equation}
It follows that
\begin{align*}
s(h)+s(h+\frac{q-1}{4})=\sum_{i=0}^{2e-1}h_i+\sum_{i=0}^{2e-1}a_i
=2\sum_{i=0}^{2e-1}h_i-2\sum_{i=0}^{2e-1}c_i+2e.
\end{align*}
Thus, if  $\sum_{i=0}^{2e-1}h_i\ge \sum_{i=0}^{2e-1}c_i$ is shown, then we obtain the assertion of this lemma. We now prove a stronger inequality, namely,
$h_{2j+1}+h_{2j+2}\ge c_{2j+1}+c_{2j+2}$ for $0\le j\le e-1$, from which it follows that  $\sum_{i=0}^{2e-1}h_i\ge \sum_{i=0}^{2e-1}c_i$.

If either of $h_{2j+1}$ or $h_{2j+2}$ is greater than or equal to $2$, then
the stronger inequality clearly holds since $c_i\in \{0,1\}$. So we assume that
both $h_{2j+1}$ and $h_{2j+2}$ are less than $2$.

By Eqn.~(\ref{eq:modularalgo}), we have
\[
a_{2j+1}+3c_{2j+1}=c_{2j}+h_{2j+1}\, \mbox{ and }\,  a_{2j+2}+3c_{2j+2}=c_{2j+1}+h_{2j+2}+2.
\]
If $c_{2j+1}=1$, then  $a_{2j+1}=0$, $c_{2j}=1$, and $h_{2j+1}=2$, which implies that $h_{2j+1}+h_{2j+2}\ge c_{2j+1}+c_{2j+2}$. If $c_{2j+1}=0$,
then $a_{2j+2}+3c_{2j+2}=h_{2j+2}+2$. In the case where $h_{2j+2}=0$, we have  $a_{2j+2}=2$
and $c_{2j+2}=0$. In the case where $h_{2j+2}=1$, we have $a_{2j+2}=0$
and $c_{2j+2}=1$. In both cases, we have  $h_{2j+1}+h_{2j+2}\ge c_{2j+1}+c_{2j+2}$. The proof of the lemma is complete.
\qed

\vspace{0.3cm}
Now recall that  $\sum_{x\in F}\chi_4^{-j}(f_\gamma(x))=\alpha K_{j,z}$ for a fourth root unity $\alpha$.
Since now we have shown that $3^e$ divides
$K_{j,z}$, we can write
$\sum_{x\in F}\chi_4^{-j}(f_\gamma(x))=3^e(a+bi)$
for some $a,b\in \Z$.
Since
$|\sum_{x\in F}\chi_4^{-j}(f_\gamma(x))|\le 2\cdot 3^e$,
we see that $(a,b)$ is equal to one of
\[
(a,b)=(0,0),(0,\pm 1), (0,\pm 2), (\pm 1,\pm 1), (\pm 1,0),(\pm 2,0).
\]
If $(a,b)=(0,0), (0,\pm 2), (\pm 1,\pm 1)$,  or $(\pm 2,0)$,  then the intersection size stated in (\ref{eq:finalH}) is not integral, a contradiction. Therefore, it must be that $(a,b)=(0,\pm 1)$ or
$(\pm 1,0)$, i.e.,  $|\sum_{x\in F}\chi_4^{-j}(f_\gamma(x))|=3^e$. Summing up, we have proved the following.
\begin{proposition}\label{prop:notzero}
Let $\gamma$ be an element of $E\setminus F$ such that
$\Tr(\gamma^{-1})\not=0$. Then, for $j=1$ or $3$,  it holds that \[
\Big|\sum_{x \in F}\chi_4^{-j}(f_\gamma(x))\Big|=3^e.
\]
\end{proposition}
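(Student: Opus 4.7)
The plan is to combine Lemma~\ref{lem:transGK}, Lemma~\ref{lem:GaussDiv}, the Weil bound on multiplicative character sums, and the integrality of the line--intersection numbers of $\cK$ coming from Lemma~\ref{lem:reduce}. First I would apply Lemma~\ref{lem:transGK} to reduce the problem to estimating $|K_{j,z}|$ for some $z\in F$. When $z=0$, the substitution $a\mapsto a^{-1}$ turns $K_{j,0}$ into the Gauss sum $G(\chi_4^{-j})$, which has modulus $\sqrt{q}=3^e$; so I may assume $z\neq 0$.

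For $z\neq 0$, expanding $K_{j,z}$ via orthogonality of characters (as in the discussion preceding Lemma~\ref{lem:GaussDiv}) yields
\[
K_{j,z}=\frac{1}{q-1}\sum_{h=0}^{q-2}G(\chi^{-h})\,G\bigl(\chi^{-h-\tfrac{q-1}{4}j}\bigr)\,\chi^h(z),
\]
where $\chi$ is a multiplicative character of order $q-1$ of $F$ with $\chi^{(q-1)/4}=\chi_4$. Lemma~\ref{lem:GaussDiv} ensures every term is divisible by $3^e$, so $3^e\mid K_{j,z}$. Since, as the text has already observed, $\sum_{x\in F}\chi_4^{-j}(f_\gamma(x))=\alpha\,K_{j,z}$ for some fourth root of unity $\alpha$, and since this sum lies in $\Z[i]$ (in which $3$ is a rational prime), I can write
\[
\sum_{x\in F}\chi_4^{-j}(f_\gamma(x))=3^e(A+Bi)\qquad \text{with }A,B\in\Z.
\]
The Weil bound \cite[Theorem~5.41]{LN97} gives $\bigl|\sum_{x\in F}\chi_4^{-j}(f_\gamma(x))\bigr|\leq 2\cdot 3^e$, so $A^2+B^2\leq 4$, leaving only the finite list $\{(0,0),(0,\pm 1),(\pm 1,0),(0,\pm 2),(\pm 2,0),(\pm 1,\pm 1)\}$ of candidates for $(A,B)$.

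The final step is to rule out every candidate with $A^2+B^2\neq 1$. For this I would feed the estimate back into Lemma~\ref{lem:reduce}: the intersection number $\tfrac{q}{2}+aH_{\gamma,1}+cH_{\gamma,3}$ must be a non-negative integer. Using $H_{\gamma,3}=\overline{H_{\gamma,1}}$, $a=\overline{c}=(1-i)/4$, and $H_{\gamma,1}=\chi_4(\gamma)\cdot 3^e(A+Bi)$, a short calculation rewrites this integrality requirement as: a certain expression of the form $3^e(\pm A\pm B)$ (with signs depending on $\chi_4(\gamma)$) must be odd. Since $3^e$ is odd, this forces one of $A+B$ or $A-B$ to be odd, and among the candidates above this is possible only when exactly one of $A,B$ is $\pm 1$ and the other is $0$; equivalently, $A^2+B^2=1$, which gives $\bigl|\sum_{x\in F}\chi_4^{-j}(f_\gamma(x))\bigr|=3^e$.

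I expect the main obstacle to be exactly this last integrality reduction. Divisibility by $3^e$ together with Weil's bound leaves a short but genuinely nonempty list of spurious candidates, and sharpening the analytic bound from $2\cdot 3^e$ to $3^e$ by purely exponential-sum techniques does not appear to be feasible; the cleanest route seems to be to import the geometric constraint that line-intersection sizes in $\AG(2,q)$ are integers, which is what Lemma~\ref{lem:reduce} delivers.
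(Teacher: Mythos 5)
Your proposal follows exactly the paper's own route: reduce to the generalized Kloosterman sum $K_{j,z}$ via Lemma~\ref{lem:transGK}, handle $z=0$ as a Gauss sum, use the Gauss-sum expansion together with the Stickelberger-based divisibility of Lemma~\ref{lem:GaussDiv} to get $3^e\mid K_{j,z}$, combine with the Weil bound to get a short list of candidates for $(A,B)$, and eliminate the spurious ones by the integrality of the intersection numbers in Lemma~\ref{lem:reduce}. The parity computation you sketch for the last step is a correct (and slightly more explicit) version of the paper's assertion that the remaining candidates make \eqref{eq:finalH} non-integral, so the argument is sound and essentially identical to the published proof.
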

By Lemma~\ref{lem:reduce}  together with Proposition~\ref{prop:zero} and $\ref{prop:notzero}$, we obtain  the assertion of
Theorem~\ref{conj:affine}.

\begin{remark} It is well known (c.f. \cite{CK}) that projective two-intersection sets are equivalent to certain strongly regular Cayley graphs (hence certain two-class association schemes). It is natural to ask what combinatorial objects are behind affine two-intersection sets. We give an answer in this remark. The notation here is almost the same as that in Notation~\ref{nota_AG}. The only difference is that we simply identify the points of $\PG(2,q)$ with $i\in \Z_N$, and define $J=\{j:\,j\in\Z_N,\,\Tr(\omega^j)=0\}$, and $l_i=\{j\in \Z_N\setminus J : \, \Tr(\omega^{i+j})=0\}$, $1\leq i\leq N-1$, where $\Tr$ is the relative trace from $E$ to $F$.
Assume that $X\subseteq \Z_N\setminus J$ is a set of type
$(m,n)$ in $\AG(2,q)$. Then there exists a set $Y\subseteq \Z_N\setminus \{0\}$ such that
\begin{equation}\label{eq:intersec}
|X\cap l_{i}|=\begin{cases}m,\quad &\textup{if $i \in Y$},\\
n,\quad & \textup{if $i \in \Z_N\setminus (Y \cup \{0\})$}.
\end{cases}
\end{equation}
Define
\[
D_0:=\{0\}, \, D_1:=\bigcup_{i\in J}C_i^{(N,q^3)}, \,
D_2:=\bigcup_{i\in X}C_i^{(N,q^3)}, \, D_3:=\bigcup_{i\in \Z_N\setminus (J\cup X)}C_i^{(N,q^3)}.
\]
Let $\psi$ be the canonical additive character of $E$.
Then, by  \eqref{eq:intersec}, the values  of $\psi(\omega^a D_2):=\sum_{x\in D_2}\psi(\omega^ax)$, $a=0,1,\ldots,N-1$, can be computed as follows:
\begin{align*}
\sum_{x \in D_2}\psi(\omega^a x)=&\, \sum_{i\in X}\sum_{\lambda\in \F_q^\ast}
\psi(\omega^{a+i} \lambda )=-|X|+q(|X\cap (J-a)|)\\
=&\, -|X|+\begin{cases}qm,\quad &\textup{if $a \in Y$},\\
qn,\quad & \textup{if $a \in \Z_N\setminus (Y \cup \{0\})$},\\
0,\quad & \textup{if $a =0$}.
\end{cases}
\end{align*}
On the other hand, it is clear that
\begin{align*}
\sum_{x \in D_1}\psi(\omega^a x)=
\begin{cases}q^{2}-1,\quad &\textup{if $a=0$},\\
-1,\quad & \textup{otherwise}.
\end{cases}
\end{align*}
Furthermore, $\sum_{x \in D_3}\psi(\omega^a x)$, $a=0,1,\ldots,N-1$,  can be computed as
$-1-\sum_{x \in D_1\cup D_2}\psi(\omega^a x)$. Thus, for each $i=1,2,3$, the character values $\psi(\omega^a D_i)$, $a=0,1,\ldots,N-1$, are constant according to $a=0$, $a\in Y$, or $a\in \Z_N\setminus (Y\cup \{0\})$. (In the language of association schemes, the Cayley graphs $Cay(E,D_i)$, $i=0,1,2,3$, form a three-class
association scheme on $E$. See, e.g., \cite[Theorem 10.1]{God}.) Thus, as an immediate consequence of our result on affine two-intersection sets,
we obtain a  three-class association scheme on $\F_{3^{6e}}$. Conversely, starting from a three-class association scheme defined by the three Cayley graphs $Cay(E,D_i)$, $1\leq i\leq 3$, we obtain an affine
two-intersection set in $\AG(2,q)$. Summing up, we have the following proposition.
\begin{proposition}\label{prop:affcombin}
With notation as above, a subset $X\subseteq \Z_N\setminus J$ is an affine two-intersectionin set in $\AG(2,q)$ if and only if there exists a subset $Y\subseteq \Z_N\setminus \{0\}$ such that  
for each $i=1,2,3$, the character values $\psi(\omega^a D_i)$, $a=0,1,\ldots,N-1$, are constant according to  $a=0$, $a\in Y$, or $a\in \Z_N\setminus (Y\cup \{0\})$ (or, equivalently, the Cayley graphs $Cay(E,D_i)$, $i=0,1,2,3$, form a three-class
association scheme on $E$).
\end{proposition}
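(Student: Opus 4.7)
The plan is to observe that both directions of the equivalence follow from a single identity, which has already been derived in the computation preceding the proposition:
\[
\psi(\omega^a D_2)=-|X|+q\cdot|X\cap(J-a)|,\qquad a\in\Z_N.
\]
This is obtained by writing $D_2=\bigcup_{i\in X}\omega^i F^{\ast}$ and evaluating the inner sum over $F^\ast$ as $q-1$ or $-1$ according to whether $\Tr(\omega^{a+i})=0$ or not. Since $X\cap J=\emptyset$ by assumption, the set $X\cap(J-a)$ equals $X\cap l_a$ for every $a\ne 0$.

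The forward direction ($\Rightarrow$) is then essentially written in the remark: if $X$ has type $(m,n)$ with intersection pattern encoded by $Y$, then the identity shows that $\psi(\omega^a D_2)$ takes a single value on $Y$ and another on $\Z_N\setminus(Y\cup\{0\})$; the analogous claim for $D_1$ is automatic, since $\psi(\omega^a D_1)=q^2-1$ if $a=0$ and $-1$ otherwise, and the claim for $D_3$ follows from $D_0\cup D_1\cup D_2\cup D_3=E$ together with $\psi(\omega^a E)=0$ for $a\ne 0$. For the converse ($\Leftarrow$), the same identity solves for $|X\cap l_a|$:
\[
|X\cap l_a|=\tfrac{1}{q}\bigl(|X|+\psi(\omega^a D_2)\bigr),\qquad a\ne 0,
\]
so the character-value hypothesis immediately forces $|X\cap l_a|\in\{m,n\}$ for two constants $m,n$, realized precisely on $Y$ and on its complement in $\Z_N\setminus\{0\}$; this is exactly the definition of $X$ being a two-intersection set.

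The parenthetical equivalence with a three-class translation association scheme is a standard reformulation: the four sets $D_0,D_1,D_2,D_3$ partition $E$, each is inverse-closed (because $-1\in F^\ast$ stabilizes every $F^\ast$-coset), and the character-value hypothesis is exactly the statement that every $\Cay(E,D_i)$ has the same three eigenvalue fibers $\{0\}$, $Y$, $\Z_N\setminus(Y\cup\{0\})$ in the dual group $\widehat{E}\cong E$, which by \cite[Theorem~10.1]{God} is equivalent to a three-class association scheme structure on $E$. The only real obstacle is the customary degenerate exclusion that both intersection numbers $m$ and $n$ be genuinely realized (i.e.\ $X\ne\emptyset$ and $X\ne\Z_N\setminus J$), which can be folded into the statement without difficulty.
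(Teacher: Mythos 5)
Your proof is correct and follows essentially the same route as the paper: the paper's argument is precisely the computation in the remark preceding the proposition, based on the identity $\psi(\omega^a D_2)=-|X|+q\,|X\cap(J-a)|$ together with the trivial evaluations for $D_1$ and $D_3$ and the duality with three-class translation schemes via \cite[Theorem~10.1]{God}. You merely make explicit what the paper leaves implicit in its one-sentence converse (inverting the identity to recover $|X\cap l_a|$) and correctly flag the degenerate case where the two intersection numbers could coincide, which the paper also glosses over.
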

Note that a result similar to Proposition~\ref{prop:affcombin} holds for affine two-intersection sets in  $\AG(s,q)$ for $s\ge 3$. We omit the detailed statement.
\end{remark}

\section*{Concluding Remarks}
In this paper, we constructed an infinite family of Cameron-Liebler line classes in $\PG(3,q)$ with parameter $x=\frac{q^2-1}{2}$, where $q\equiv 5$ or $9\pmod {12}$. Furthermore we constructed the first infinite family of sets of type $(m,n)$ in the affine plane $\AG(2, q)$, where $q$ is an even power of $3$. It would be interesting to come up with a general construction of Cameron-Liebler line classes in $\PG(3,q)$ when $q$ is even since there are some known examples in this case in the thesis \cite{Rthesis} of Rodgers.

We close this paper by referring the reader to a paper \cite{BDMR14} by De Beule, Demeyer, Metsch, and Rodgers. Immediately after we finished a draft of this manuscript, we became aware that De Beule, Demeyer, Metsch and Rodgers \cite{BDMR14} also obtained the same result on Cameron-Liebler line classes with parameter $x=\frac{q^2-1}{2}$ at almost the same time. The approaches for proving the main result are comparable but different enough to justify that we write two separate papers;  our approach is more algebraic and the approach taken by De Beule, Demeyer, Metsch and Rodgers is more geometric.  The two teams of authors discussed this matter with each other, and decided to submit their  papers separately.


\begin{thebibliography}{99}

\bibitem{BBM} S. Ball, A. Blokhuis, F. Mazzocca, Maximal arcs in Desarguesian planes of odd order do not exist, {\it Combinatorica}, {\bf 17} (1997), 31--41.

\bibitem{BEW97}
B. Berndt, R. Evans, K.S. Williams, {\it Gauss and Jacobi Sums}, Wiley, 1997.


\bibitem{bh} A.E. Brouwer,  W.H. Haemers, {\it Spectra of Graphs}, Springer,
New York, 2012.


\bibitem{BD}A.A. Bruen, K. Drudge, The construction of Cameron-Liebler line classes in
PG($3,q$), {\it Finite Fields Appl.}, {\bf 5} (1999), 35--45.

\bibitem{CL}P.J. Cameron, R.A. Liebler, Tactical decompositions and orbits of projective groups, {\it Linear Algebra Appl.}, {\bf 46} (1982), 91--102.

\bibitem{CK}R. Calderbank, W.M. Kantor, The geometry of two-weight codes, {\it Bull. Lond. Math. Soc.}, {\bf 18} (1986), 97--122.


\bibitem{DHS} J. De Beule, A. Hallez, L. Storme, A non-existence result on Cameron-Liebler line classes, {\it J. Combin. Designs}, {\bf 16} (2008), 342--349.

\bibitem{BDMR14}
J. De Beule, J. Demeyer, K. Metsch, M. Rodgers, A new family of tight sets in
$\cQ^+(5,q)$, in print {\it Des. Codes Cryptogr.}, DOI 10.1007/s10623-014-0023-9


\bibitem{drudge} K. Drudge, On a conjecture of Cameron and Liebler, {\it Europ. J. Combin.},
{\bf  20} (1999), 263--269.

\bibitem{Gav}A.L. Gavrilyuk, I.Y. Mogilnykh, Cameron-Liebler line classes in $\PG(n,4)$, {\it Des. Codes Cryptogr.} {\bf 73} (2014), 969--982.

\bibitem{GavM} A.L. Gavrilyuk, K. Metsch, A modular equality for Cameron-Liebler line classes, {\it J. Combin. Theory, Ser. A} {\bf 127} (2014), 224--242.

\bibitem{God}C.D. Godsil, {\it Algebraic Combinatorics}, Chapman \&  Hall, New York, 1993.

\bibitem{GP}P. Govaerts, T. Penttila, Cameron-Liebler line classes in PG($3,4$), {\it Bull. Belg. Math. Soc. Simon Stevin}, {\bf 12} (2005), 793--804.

\bibitem{GS} P. Govaerts, L. Storme, On Cameron-Liebler line classes, {\it Adv. Geom.}, {\bf 4} (2004), 279--286.

\bibitem{HHKWX} H. Hollmann, T. Helleseth,  A. Kholosha,  Z. Wang, and Q. Xiang, Proofs of two conjectures on ternary weakly regular bent functions, {\it IEEE Trans. Inform. Theory}, {\bf 55} (2009), 5272--5283.

\bibitem{h2}J.W.P. Hirschfeld, {\it Finite projective spaces of three dimensions}, Oxford University Press, Oxford, 1985.

\bibitem{h1}J.W.P. Hirschfeld, {\it Projective geometries over finite fields}, Clarendon Press, Oxford, 1979.

\bibitem{LN97}
R. Lidl, H. Niederreiter, {\it Finite Fields}, Cambridge
Univ. Press, 1997.

\bibitem{M94}
S.L. Ma, A survey of partial difference sets, {\em Des. Codes Cryptogr.,} {\bf 4} (1994), 221--261.


\bibitem{Metsch1} K. Metsch, The non-existence of Cameron-Liebler line classes with parameter $2<x\leq q$, {\it Bull. Lond. Math. Soc}., {\bf 42} (2010), 991--996.

\bibitem{Metsch2} K. Metsch, An improved bound on the existence of Cameron-Liebler line classes, {\it J. Combin. Theory, Ser. A}, {\bf 121} (2014), 89--93.

\bibitem{stan} S.E. Payne, Topics in finite geometry: ovals, ovoids and
generalized quadrangles (edition of 16 May 2007), http://math.ucdenver.edu/$\sim$spayne/classnotes/topics.pdf.

\bibitem{Pchar}T. Penttila, Cameron-Liebler line classes in $PG(3, q)$, {\it Geom. Dedicata}, {\bf 37} (1991), 245--252.

\bibitem{PR}T. Penttila, G.F. Royle, Sets of type ($m, n$) in the affine and projective planes of order nine, {\it Des. Codes Cryptogr}.,
{\bf 6} (1995), 229--245.


\bibitem{rodgers}M. Rodgers, Cameron-Liebler line classes, {\it Des. Codes Cryptogr.}, {\bf 68} (2013), 33--37.

\bibitem{Rthesis}M. Rodgers, {\it On some new examples of Cameron-Liebler line classes}, PhD thesis, University of Colorado, 2012.


\bibitem{Yama} K. Yamamoto, M. Yamada, Williamson Hadamard matrices and
Gauss sums, {\it J. Math. Soc. Japan}, {\bf 37} (1985),  703--717.
\end{thebibliography}
\end{document}